\documentclass[12 pt]{article}
\usepackage{graphicx}
\usepackage{mathtools,amsmath,amssymb,amsthm,enumerate,hyperref, enumerate, xcolor}
\usepackage{mathrsfs}

\newtheorem{theorem}{Theorem}[section]
\newtheorem{corollary}[theorem]{Corollary}
\newtheorem{lemma}[theorem]{Lemma}
\newtheorem{proposition}[theorem]{Proposition}

\newtheorem{letterthm}{Theorem}
\newtheorem{lettercor}[letterthm]{Corollary}

\theoremstyle{definition}
\newtheorem{definition}[theorem]{Definition}
\newtheorem{example}[theorem]{Example}

\theoremstyle{remark}
\newtheorem{remark}[theorem]{Remark}

\newcommand{\cA}{\mathcal{A}}
\newcommand{\cB}{\mathcal{B}}
\newcommand{\cD}{\mathcal{D}}
\newcommand{\cE}{\mathcal{E}}
\newcommand{\cF}{\mathcal{F}}
\newcommand{\cG}{\mathcal{G}}

\newcommand{\cR}{\mathcal{R}}
\newcommand{\cS}{\mathcal{S}}

\newcommand{\cZ}{\mathcal{Z}}

\newcommand{\C}{\mathbb{C}}
\newcommand{\F}{\mathbb{F}}
\newcommand{\N}{\mathbb{N}}

\newcommand{\T}{\mathbb{T}}
\newcommand{\Z}{\mathbb{Z}}

\newcommand{\scF}{\mathscr{F}}

\newcommand{\vphi}{\varphi}
\newcommand{\actson}{\curvearrowright}

\newcommand{\inv}{^{-1}}
\newcommand{\zero}{^{(0)}}
\newcommand{\two}{^{(2)}}
\newcommand{\three}{^{(3)}}
\newcommand{\ovt}{\overline{\otimes}}

\DeclareMathOperator{\id}{id}

\DeclareMathOperator{\Fix}{Fix}
\DeclareMathOperator{\Iso}{Iso}
\DeclareMathOperator{\Span}{span}
\DeclareMathOperator{\supp}{supp}

\DeclareMathOperator{\LC}{LC}
\DeclareMathOperator{\RC}{RC}

\setlength{\parindent}{0pt}


\makeatletter
\let\@fnsymbol\@alph
\makeatother

\title{Factoriality of groupoid von Neumann algebras}
\author{Tey Berendschot \thanks{KU Leuven, Department of Mathematics, Leuven, Belgium} \textsuperscript{,}\thanks{\texttt{tey.berendschot@gmail.com}\\ Supported by PhD grant 1101324N funded by the Research Foundation Flanders (FWO)} \and Soham Chakraborty \footnotemark[1] \textsuperscript{,}\thanks{\texttt{soham.chakraborty@kuleuven.be} \\ Supported by FWO research project G090420N of the Research Foundation Flanders} \and Milan Donvil \footnotemark[1] \textsuperscript{,}\thanks{\texttt{milan.donvil@kuleuven.be}\\ Supported by PhD grant 1162024N funded by the Research Foundation Flanders (FWO)} \and Se-Jin Kim \footnotemark[1] \textsuperscript{,}\thanks{\texttt{sam.kim@kuleuven.be} \\ Supported by the Research Foundation Flanders (FWO) project number G085020N and the internal KU Leuven funds project number C14/19/088.}  }


\begin{document}

\setlength{\parindent}{0em}
\setlength{\parskip  }{5.5 pt}

\maketitle

\begin{abstract}\noindent
We give a characterisation of the factoriality of the groupoid von Neumann algebra $L(\cG)$ associated to a discrete measured groupoid $(\cG, \mu)$. We introduce the notion of groupoids with `infinite conjugacy classes' and show that this property together with ergodicity of the groupoid is equivalent with factoriality of $L(\cG)$.
\end{abstract}

\section{Introduction}

Discrete measured groupoids have historically played an important role in the study of von Neumann algebras. In \cite{Mac63}, Mackey introduced the notion of ergodic groupoids. An early breakthrough in this direction was due to Feldman and Moore who showed in \cite{FM77} that every principal discrete measured groupoid, i.e. countable equivalence relation, arises as the orbit equivalence relation of a countable group action. In \cite{FM77b} they defined the von Neumann algebra $L(\mathcal{R})$ associated to such an equivalence relation $\mathcal{R}$ on a standard probability space $(X,\mu)$. They showed that the abelian subalgebra $L^{\infty}(X,\mu)$ is a so-called Cartan subalgebra of $L(\mathcal{R})$. In fact they showed that any von Neumann algebra with a Cartan subalgebra arises from such an equivalence relation together with a 2-cocycle, thus giving a complete characterization. 

In general, given a discrete measured groupoid $\mathcal{G}$, one can similarly construct the groupoid von Neumann algebra $L(\mathcal{G})$. This was first documented and studied by Hahn in \cite{Hah78}. Groupoid von Neumann algebras provide a huge class of examples that cover group von Neumann algebras, von Neumann algebras with Cartan subalgebras as above and more generally, group measure space von Neumann algebras, arising from transformation groupoids. This leads to an interesting question: can every factor be realized as a groupoid von Neumann algebra? It can be shown that such a groupoid von Neumann algebra is anti-isomorphic to itself. In \cite{Con75}, Connes constructed a factor which is not anti isomorphic to itself, thus answering the question in negative. 

In this article, we study the question: when is the von Neumann algebra associated to a discrete measured groupoid a factor? We give a characterization of factoriality purely in terms of the groupoid. Up until this point, such a characterization was only known in special cases. For a countable group $\Gamma$, it is well known that $L(\Gamma)$ is a factor if and only if the group has infinite conjugacy classes (such groups are generally called icc). In the case of a countable equivalence relation $\cR$ it was proved in \cite{FM77b} that the factoriality of the von Neumann algebra $L(\cR)$ is equivalent to the ergodicity of $\cR$. Finally, when $\Gamma \curvearrowright (X,\mu)$ is an ergodic (not necessarily free) action of a countable group on a standard probability space, a characterization of the factoriality of the crossed product $L^\infty(X, \mu) \rtimes \Gamma$ was given by Vaes in \cite{362329}.

The organisation of the first part of the paper is as follows. We state some preliminaries on discrete measured groupoids and their von Neumann algebras in Section \ref{Section:Preliminaries}, and then gather some folklore results on bases of groupoids in Section \ref{Section:Basis, Fourier, j map}. A lot of these results have already been used intrinsically in the literature, for example in \cite{Anantharaman-Delaroche13}. In Section \ref{Section:Factoriality untwisted}, we define the notion of conjugacy classes for Borel subsets of the isotropy subgroupoid, which consists of all elements whose source and target are equal. For such a Borel subset $E$ we show that the set $\Omega_{E}$ consisting of all conjugates of $E$ is a Borel subset and we call such a conjugacy class finite if it has finite measure. In Definition \ref{Definition:Finite measure conjugacy class and icc} we define groupoids satisfying the \emph{infinite conjugacy class (icc) condition} if any such subset with a finite conjugacy class is essentially a subset of the unit space. Then we prove the following theorem, which is the main result of this paper.

\begin{letterthm}\label{theorem:Factoriality untwisted}
    Let $(\cG, \mu)$ be a discrete measured groupoid. The following two conditions are equivalent:
    \begin{enumerate}
        \item The groupoid $(\cG,\mu)$ is icc.
        \item We have that $\cZ \left(L(\cG)\right) = L^\infty(\cG\zero, \mu)^\cG$.
    \end{enumerate}
\end{letterthm}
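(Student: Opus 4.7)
The plan is to prove both inclusions of the center identity $\cZ(L(\cG)) = L^\infty(\cG\zero, \mu)^\cG$ under the icc hypothesis, and then establish the reverse implication by contrapositive. First I would dispatch the easy inclusion $L^\infty(\cG\zero, \mu)^\cG \subseteq \cZ(L(\cG))$: a bounded Borel function $f$ on $\cG\zero$ is $\cG$-invariant precisely when $f \circ s = f \circ t$ almost everywhere on $\cG$, and using the Fourier/bisection description from Section \ref{Section:Basis, Fourier, j map} this exactly translates into $f$ commuting with $u_\phi$ for every Borel bisection $\phi$. Since such $u_\phi$'s generate $L(\cG)$, this inclusion holds unconditionally and does not require the icc hypothesis.

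For the forward direction (icc $\Rightarrow$ center contained in invariants), I would take $x \in \cZ(L(\cG))$ and analyse it through its Fourier expansion. The first reduction is that commutation with $L^\infty(\cG\zero, \mu)$ forces the ``Fourier support'' of $x$ to lie inside the isotropy subgroupoid $\{g \in \cG : s(g) = t(g)\}$, since off-isotropy coefficients are killed by the relation $f \cdot x = x \cdot f$ applied to indicator functions $f = 1_A$ separating $s(g)$ from $t(g)$. The second step is to show that commutation with $u_\phi$ for every Borel bisection $\phi$ translates, via the $j$-map and the Fourier machinery, into the statement that the Borel support $E$ of $x$ in the isotropy is essentially invariant under conjugation by $\phi$, hence $E = \Omega_E$ up to a null set. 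Because $x$ is an $L^2$-element, $\mu(\Omega_E \setminus \cG\zero) = \mu(E \setminus \cG\zero) < \infty$, so $E \setminus \cG\zero$ has a finite conjugacy class. The icc hypothesis now forces $E \subseteq \cG\zero$ essentially, so $x \in L^\infty(\cG\zero, \mu)$; invariance under all $u_\phi$ then places $x$ in $L^\infty(\cG\zero, \mu)^\cG$.

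For the reverse implication I would argue contrapositively: assume $\cG$ is not icc, so there is a Borel subset $E$ of the isotropy, not essentially contained in $\cG\zero$, with $\mu(\Omega_E) < \infty$. From Section \ref{Section:Factoriality untwisted} the set $\Omega_E$ is itself a Borel subset of the isotropy closed under $\cG$-conjugation, so the element $x := u_{\Omega_E}$ (the partial isometry / Fourier series naturally attached to $\Omega_E$, well-defined in $L(\cG)$ because $\mu(\Omega_E) < \infty$ yields an $L^2$-vector) lies in $L(\cG)$ but not in $L^\infty(\cG\zero, \mu)$; by construction it commutes with every Borel bisection and with $L^\infty(\cG\zero, \mu)$, so it is central. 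This gives an element of $\cZ(L(\cG))$ outside $L^\infty(\cG\zero, \mu)^\cG$, contradicting (ii). The main obstacle is the Fourier analysis step: rigorously defining the ``support'' of a general element of $L(\cG)$ modulo null sets and showing that the operator-algebraic commutation relations translate cleanly into measure-theoretic invariance statements about subsets of the isotropy — this is where the basis/Fourier/$j$-map machinery of Section \ref{Section:Basis, Fourier, j map} must be leveraged carefully, particularly to justify manipulating infinite Fourier sums for non-$\|\cdot\|_2$-bounded central elements and to identify the conjugation action of bisections on isotropy with the set-theoretic conjugacy relation used to define $\Omega_E$.
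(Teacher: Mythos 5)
Your overall route coincides with the paper's: the unconditional inclusion $L^\infty(\cG\zero,\mu)^\cG \subseteq \cZ(L(\cG))$, the reduction of a central element's Fourier support to $\Iso(\cG)$ via commutation with $L^\infty(\cG\zero,\mu)$, conjugation-invariance of $j(a)$ via commutation with bisections, and the converse implication via the central element $\lambda_{\Omega_A}$ attached to a finite-measure conjugacy class. There is, however, one genuine error in your forward direction. You assert that because $x$ is an ``$L^2$-element'', its Borel support $E$ satisfies $\mu_s(E\setminus\cG\zero)<\infty$. This is false: $\mu_s$ is only $\sigma$-finite on $\cG$, and a function in $L^2(\cG,\mu_s)$ can have support of infinite $\mu_s$-measure (take values $1/n$ on disjoint sets of measure $1$). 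Since the icc hypothesis only constrains Borel subsets of $\Iso(\cG)$ whose conjugacy class has \emph{finite} measure, your argument as written cannot invoke it on $E$. The paper repairs exactly this point with a Chebyshev-type truncation: after subtracting the conditional expectation, it replaces $\supp(j(a))$ by a level set $E_n=\{g\in\cG \mid |j(a)(g)|^2\geq 1/n^2\}$ for some $n$ with $\mu_s(E_n)>0$. This set is still contained in $\Iso(\cG)$ and still conjugation-invariant (because $|j(a)|$ is constant on conjugacy orbits by Lemma \ref{Lemma:j(a) conj invariant}), it still misses $\cG\zero$, and now $\mu_s(E_n)\leq n^2\|j(a)\|_2^2<\infty$. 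With that one-line substitution your argument closes; without it, the step where you apply the icc hypothesis fails.

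A minor secondary remark: in the converse direction you justify that $\lambda_{\Omega_E}$ is ``well-defined in $L(\cG)$'' because $\mu_s(\Omega_E)<\infty$ yields an $L^2$-vector. An $L^2$-vector in the image of $j$ does not by itself produce a bounded operator when the fibre counts $|s\inv(x)\cap\Omega_E|$ are unbounded; the paper's own proof passes over the same point, so this is not a deviation from the reference argument, but it deserves more care than either write-up gives it.
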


When the groupoid is furthermore ergodic, we have the following immediate characterization of factoriality of groupoid von Neumann algebras from Theorem \ref{theorem:Factoriality untwisted}. 

\begin{lettercor}\label{Corollary:Factoriality untwisted}
    Let $(\cG, \mu)$ be a discrete measured groupoid. The following two conditions are equivalent:
    \begin{enumerate}
        \item The groupoid $(\cG,\mu)$ is ergodic and icc.
        \item The von Neumann algebra $L(\cG)$ is a factor.
    \end{enumerate}
\end{lettercor}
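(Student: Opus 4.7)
The plan is to deduce this corollary directly from Theorem \ref{theorem:Factoriality untwisted} by combining two elementary observations about the inclusion $L^\infty(\cG\zero, \mu) \subseteq L(\cG)$. First, the subalgebra of $\cG$-invariant functions $L^\infty(\cG\zero,\mu)^{\cG}$ is always contained in $\cZ(L(\cG))$: the groupoid implements a partial action on $L^\infty(\cG\zero,\mu)$ via the source and target maps, and $\cG$-invariance is precisely the condition that a function on the unit space commutes with every generator of $L(\cG)$. Second, ergodicity of $(\cG, \mu)$ is, by definition, the statement that $L^\infty(\cG\zero,\mu)^{\cG} = \C \cdot 1$.

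For the implication (1) $\Rightarrow$ (2), I would assume $(\cG, \mu)$ is icc and ergodic. Theorem \ref{theorem:Factoriality untwisted} then yields $\cZ(L(\cG)) = L^\infty(\cG\zero,\mu)^{\cG}$, and ergodicity collapses the right-hand side to $\C \cdot 1$, so $L(\cG)$ is a factor.

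For the converse (2) $\Rightarrow$ (1), I would assume $L(\cG)$ is a factor and chase the chain
\[
\C \cdot 1 \;\subseteq\; L^\infty(\cG\zero,\mu)^{\cG} \;\subseteq\; \cZ(L(\cG)) \;=\; \C \cdot 1.
\]
This forces equality throughout. Equality on the left is exactly ergodicity, and the middle equality $\cZ(L(\cG)) = L^\infty(\cG\zero,\mu)^{\cG}$ is the condition that delivers the icc property by the reverse direction of Theorem \ref{theorem:Factoriality untwisted}.

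There is no substantial obstacle here, since Theorem \ref{theorem:Factoriality untwisted} is doing all the work; ergodicity simply replaces the $\cG$-fixed-point subalgebra on the unit space by the scalars. The only routine input is the always-valid containment $L^\infty(\cG\zero,\mu)^{\cG} \subseteq \cZ(L(\cG))$, which is folklore and is presumably already recorded in Section \ref{Section:Basis, Fourier, j map}.
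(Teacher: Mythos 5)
Your proposal is correct and follows essentially the same route as the paper: the paper's proof of Corollary \ref{Corollary:Factoriality untwisted} likewise reduces everything to Theorem \ref{theorem:Factoriality untwisted} together with the observation that $L^\infty(\cG\zero,\mu)^{\cG} = \C \cdot 1_{\cG\zero}$ if and only if $\cG$ is ergodic. The chain of inclusions you use for (2) $\Rightarrow$ (1), resting on the standard containment $L^\infty(\cG\zero,\mu)^{\cG} \subseteq \cZ(L(\cG))$, is exactly the implicit content of the paper's one-line argument.
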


The situation is more complicated when a non-trivial cocycle twist is involved. Indeed for a 2-cocycle $\omega$ on $\cG$, one can define the twisted groupoid von Neumann algebra as we do in Section \ref{Section:Factoriality twisted}. However, a lot of statements for such twisted von Neumann algebras become more challenging become more challenging to prove. For example, as observed in \cite{Hah78}, the result in \cite{Con75} does not address the question: `Can every factor can be realized as a cocycle twisted groupoid von Neumann algebra?' which remained open for a long time. In fact, only recently in \cite{donvil2024wsuperrigidity}, the third named author and Stefaan Vaes constructed a family of II$_{1}$ factors that are not virtually isomorphic to any twisted groupoid von Neumann algebra, even allowing arbitrary amplifications. 

Similarly for factoriality, beyond the principal case the 2-cocycle creates a non-trivial obstruction. A characterization of factoriality for twisted group von Neumann algebras  was given in \cite{Kle62}. In Section \ref{Section: Kleppner's condition and Factoriality}, we introduce the notion of central subsets of the isotropy subgroupoid. If $\omega$ is a 2-cocycle on the groupoid, we call such a conjugation invariant Borel subset $E \subset \Iso(\cG)$ \textit{central} if there exists a function $f: E \rightarrow \mathbb{C}$ such that $f(ghg^{-1}) = \overline{\omega(ghg\inv, g)}\omega(g, h) f(h)$ for elements $h \in E$ and all elements $g$ with the same source as $h$. In Theorem \ref{theorem:Factoriality twisted} and Corollary \ref{corollary:factoriality twisted}, we give a characterization of factoriality for cocycle twisted groupoid von Neumann algebras in terms of these central subsets. We also introduce a similar condition as in \cite{Kle62} for twisted discrete measure groupoids, and show that it is a necessary condition for the factoriality of the twisted groupoid von Neumann algebra. 

Finally in Section \ref{Section:Corollaries and examples} we give some applications of our main theorem. Firstly, we show that groupoids arising as a bundle of groups are icc if and only if the isotropy groups are icc almost everywhere. Next we reprove the theorem of Vaes for transformation groupoids and generalize his results in the context of partial dynamical systems. The paper ends with an analysis of measurable Deaconu--Renault groupoids.

\section{Preliminaries}\label{Section:Preliminaries}

A \emph{discrete Borel groupoid} is a groupoid $\cG$ that is also a standard Borel space such that
\begin{itemize}
	\item The unit space $\cG\zero$ is a Borel subset of $\cG$.
	\item the source and target maps $s,t: \cG \to \cG^{(0)}$ are countable-to-one Borel maps,
	\item the composition map
\begin{align*}
     \{(g,h) \in \cG \times \cG \mid t(h) = s(g)\} \to \cG : (g,h) \mapsto g h 
 \end{align*}
and the inversion map $g \in \cG \mapsto g^{-1}$ are Borel maps.
\end{itemize}
See \cite[Section 2.1]{Anantharaman-Delaroche13} and the references therein for more information about Borel groupoids. A \emph{(non-singular) discrete measured groupoid} $(\cG, \mu)$ is a discrete Borel groupoid $\cG$ together with a probability measure $\mu$ on $\cG\zero$ such that the following two $\sigma$-finite measures on $\cG$ are equivalent:
\begin{align*}
\mu_s(A) &= \int_{\cG\zero} | \{g \in A\ |\ s(g) = a \}|\ \mathrm{d}\mu(a);\\
\quad \mu_t(A) &= \int_{\cG\zero} | \{g \in A\ |\ t(g) = a \}|\ \mathrm{d}\mu(a).
\end{align*}
In particular, if the two measures coincide, i.e. $\mu_t = \mu_s$, we say $(\cG, \mu)$ is a \emph{discrete probability measure preserving (pmp) groupoid}, and in that case we write $\mu^{(1)}$ for $\mu_t = \mu_s$.

Associated to a discrete Borel groupoid $\cG$ is an equivalence relation on $\cG\zero$ defined as $\cR_{\cG}=\{(t(g),s(g))\mid g\in \cG\}$. It is a nonsingular countable equivalence relation precisely because $\cG$ is a nonsingular discrete measured groupoid. We say $\cG$ is an \emph{ergodic} groupoid if its associated equivalence relation is ergodic.

The isotropy subgroupoid, denoted by $\Iso(\cG)$, consists of all elements that have the same source and range, i.e.
\begin{equation*}
    \Iso(\cG) = \{g \in \cG\ |\ s(g) = t(g) \}.
\end{equation*}

We call a subset $A \subset \cG$ a \emph{(partial) bisection} if the maps $s|_A$ and $t|_A$ are injective. If moreover we have that $s(A) = t(A) = \cG\zero$, we call $A$ a \emph{global bisection}. The set of all partial bisections of $\cG$ is called the \emph{full pseduogroup} of $\cG$ and denoted by $[[\cG]]$. The group of all global bisections is called the \emph{full group} of $\cG$ and denoted by $[\cG]$. For any two subsets $A, B \subset \cG$, denote by $A \cdot B = \{gh\ |\ g\in A, h \in B, s(g) = t(h)\}$. Note that if $A$ and $B$ are bisections, then so is $A \cdot B$.

For a bisection $A \subset \cG$, we denote by $\sigma_A := t \circ s|_A\inv$ the bijection from $s(A)$ to $t(B)$. If $(\cG, \mu)$ is a discrete measured groupoid, this induces a $\ast$-endomorphism on $L^\infty(\cG\zero, \mu)$ by
\begin{equation*}
    \sigma_A(f) (x) = \begin{cases}
        f \circ \sigma_A\inv & x \in t(A);\\
        0 & x \notin t(A).
    \end{cases}
\end{equation*}

Let $(\cG, \mu)$ be a discrete measured groupoid. To construct the groupoid von Neumann algebra, we start by defining partial isometries $\lambda_A \in B(L^2(\cG, \mu_s))$ for each bisection $A \subset \cG$ by
\begin{equation}\label{equation:Left regular rep}
    \lambda_A 1_E :=  1_{A\cdot E}
\end{equation}
for every subset $E \subset \cG$ with $\mu_s(E) < \infty$.
The \emph{groupoid von Neumann algebra} is defined by $L(\cG) := \{ \lambda_A\ |\ A \subset \cG \text{ Borel bisection}\}''$. See \cite{Anantharaman-Delaroche13} for more information on the construction. Whenever $A$ and $B$ are bisections, one sees that $\lambda_A \lambda_B = \lambda_{A \cdot B}$. Note that whenever $A \subset \cG$ is a Borel subset with $\mu_s(A) < \infty$, it defines an element of $L(\cG)$ which we also denote by $\lambda_A$ as it is defined by the formula \eqref{equation:Left regular rep}.

Similarly, on can define partial isometries by
\begin{equation*}
    (\rho_A 1_E) (g) := 1_{E\cdot A\inv}
\end{equation*}
and define the von Neumann algebra $R(\cG) := \{ \rho_A\ |\ A \subset \cG \text{ is a bisection}\}''$. It is clear that $\lambda_A$ and $\rho_B$ commute for all bisections $A, B \subset \cG$. In Proposition~\ref{Proposition:Commutant of left regular representation} we show that, as in the case of groups, $R(\cG)' = L(\cG)$. 

Note that $L(\cG)$ includes a copy of $L^\infty(\cG\zero, \mu)$ as a regular subalgebra. When $A \subset \cG$ is a Borel bisection and $F \in L^\infty(\cG\zero,\mu)$, one has the relation $\lambda_A F \lambda_A^* = (F \circ \sigma_A\inv) \cdot 1_{t(A)}$. Moreover, there is always a faithful normal conditional expectation $E: L(\cG) \to L^\infty(\cG\zero, \mu)$ given by $a \mapsto a1_{\cG\zero}|_{\cG\zero}$. The fact that $E(a) \in L^\infty(\cG\zero, \mu)$ follows from Lemma \ref{Lemma:a^B and a_B}. 

\section{Basis of a groupoid, Fourier decomposition, and the $j$ map}\label{Section:Basis, Fourier, j map}

The following proposition is a consequence a theorem of Lusin-Novikov, see e.g. \cite[Theorem~18.10]{Kec95}, as noted by Anatharaman-Delaroche in \cite{Anantharaman-Delaroche13}. We give a proof for convenience.

\begin{proposition}\label{Proposition:basis of groupoid}
    Let $\cG$ be a discrete Borel groupoid. There exists a countable collection $\cB$ of Borel subsets of $\cG$ such that the following hold.
    \begin{enumerate}
        \item $\cG$ is equal to the disjoint union $\cG = \bigsqcup_{B\in \cB}B$.
        \item Every $B \in \cB$ is a bisection.
        \item For every $B \in \cB$ we have that $B\inv \in \cB$.
        \item $\cG\zero \in \cB$.
    \end{enumerate}
\end{proposition}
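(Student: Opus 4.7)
The plan is to use the Lusin--Novikov theorem twice to obtain bisections, and then to symmetrize the resulting partition under inversion.

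First I would observe that the unit space $\cG\zero$ is itself a Borel bisection satisfying $(\cG\zero)\inv = \cG\zero$, so it can be set aside and included in $\cB$ at the end. The real task is to partition $\cG \setminus \cG\zero$, which is a Borel subset of the standard Borel space $\cG$, into countably many Borel bisections.

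Next I would apply the Lusin--Novikov uniformization theorem \cite[Theorem~18.10]{Kec95} to the source map $s \restriction_{\cG \setminus \cG\zero} \colon \cG \setminus \cG\zero \to \cG\zero$. Since this map is a countable-to-one Borel map between standard Borel spaces, it yields a countable Borel partition $\{A_n\}_{n \in \N}$ of $\cG \setminus \cG\zero$ such that $s$ is injective on each $A_n$. I would then apply the same theorem to $t \restriction_{A_n}$ (still countable-to-one and Borel) to further partition each $A_n$ into countably many Borel sets on which $t$ is also injective. Collecting everything, I obtain a countable Borel partition $\{B_n\}_{n \in \N}$ of $\cG \setminus \cG\zero$ into bisections.

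To ensure closure under inversion, I would use that $g \mapsto g\inv$ is a Borel involution of $\cG$, so each $B_m\inv$ is again a Borel bisection. I would form the common refinement
\[
    \cB := \{\cG\zero\} \cup \bigl\{ B_n \cap B_m\inv \;:\; n, m \in \N \bigr\} \setminus \{\emptyset\}.
\]
This is a countable collection of Borel bisections that partitions $\cG$, since $\{B_m\inv\}_m$ is itself a partition of $\cG \setminus \cG\zero$. Moreover $(B_n \cap B_m\inv)\inv = B_m \cap B_n\inv$, which belongs to $\cB$, so the family is closed under inversion.

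The only real subtlety is verifying that the Lusin--Novikov theorem applies, which comes down to the hypothesis that $s$ and $t$ are countable-to-one Borel maps between standard Borel spaces. After that, all the work is purely set-theoretic refinement, and the symmetrization step via $B_n \cap B_m\inv$ is the key trick that ensures property (3) without destroying the other properties.
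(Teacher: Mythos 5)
Your proof is correct and follows essentially the same route as the paper: apply Lusin--Novikov to $s$ and to $t$ to partition $\cG$ into countably many Borel bisections, then symmetrize by taking the common refinement with the inverted partition via sets of the form $B_n \cap B_m\inv$, adjoining $\cG\zero$ separately. The only cosmetic difference is that you refine each $s$-injective piece by applying Lusin--Novikov to $t$ restricted to it, whereas the paper intersects two global partitions, which amounts to the same thing.
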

\begin{proof}
    Since $s: \cG \to \cG\zero$ is countable-to-one, we can use the Lusin-Novikov Theorem \cite[Theorem~18.10]{Kec95} to find a countable collection $(U_n)_{n}$ of Borel sets $U_n \subset \cG$ such that $s: U_n \to s(U_n)$ is a Borel isomorphism for every $n$ and such that $\cG = \bigsqcup_n U_n$. Indeed, setting $P := \{(x,y) \in \cG\zero \times \cG\ |\ x = s(y) \}$, by Lusin-Novikov there is a partition $P = \bigsqcup_n P_n$. Denoting by $\pi$ the projection from $\cG\zero \times \cG$ onto $\cG$, set $U_n = \pi(P_n)$. Similarly, there exists a countable collection $(V_n)_n$ of Borel subsets $V_n \subset \cG$ such that $t: V_n \to t(V_n)$ is a Borel isomorphism for every $n$ and such that $\cG = \bigsqcup_n V_n$. Define $\cB' = \{(U_n \cap V_m) \setminus \cG_0 \}_{n,m}$ and set $\cB = \{A \cap B\inv\ |\ A, B \in \cB'\} \cup \{\cG\zero\}$.
\end{proof}

\begin{definition}
    Let $\cG$ be a discrete Borel groupoid. A countable collection $\cB$ of mutually disjoint Borel bisections of $\cG$ is a \emph{basis} of $\cG$ if $\cG\zero \in \cB$ and $\cup_{B\in \cB} B = \cG$. It is called a \emph{symmetric basis} if it moreover holds that whenever $B \in \cB$, also $B\inv \in \cB$.
\end{definition}

Let $(\cG, \mu)$ be a discrete measured groupoid. Our goal is to use a basis $\cB$ of $\cG$ to get a Fourier decomposition for elements of $L^2(\cG, \mu_s)$ that in turn yields a Fourier decomposition for elements of $L(\cG)$.

\begin{proposition}\label{Proposition:basis for L2}
    Let $(\cG, \mu)$ be a discrete measured groupoid and let $\cB$ be a basis for $\cG$. The set of vectors $\{1_B\ |\ B \in \cB\}$ is an orthogonal set in $L^2(\cG, \mu_s)$. Furthermore, it is a basis in the following sense: for any $f \in L^2(\cG, \mu_s)$ we have the decomposition
    \begin{equation}
        f = \sum_{B \in \cB} f \cdot 1_B = \sum_{B \in \cB} (f \circ t|_B\inv) \ast 1_B = \sum_{B \in \cB} 1_B \ast (f \circ s|_B\inv)
    \end{equation}
    where $f \cdot 1_B$ denotes pointwise multiplication and convergence of the series is in $L^2$-norm and where $*$ denotes the convolution product: $f_1 * f_2 (g) = \sum_{h: s(h) = s(g)} f_1(h)f_2(h^{-1}g)$.
\end{proposition}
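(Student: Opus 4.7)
The plan is to prove the proposition in three stages. First, the orthogonality is immediate: the elements of $\cB$ are pairwise disjoint Borel subsets of $\cG$, so the indicators $1_B$ have pairwise disjoint supports and are therefore pairwise orthogonal in $L^2(\cG, \mu_s)$. The same disjointness, together with $\cG = \bigsqcup_{B \in \cB} B$, yields the pointwise identity $f = \sum_{B \in \cB} f \cdot 1_B$ as well as the Pythagorean identity $\|f\|_2^2 = \sum_{B \in \cB} \|f \cdot 1_B\|_2^2$, from which the series converges unconditionally to $f$ in $L^2(\cG,\mu_s)$.

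The real content is the identification of each piece $f \cdot 1_B$ with the two convolutions on the right. Here I would first reinterpret $f \circ t|_B\inv$ and $f \circ s|_B\inv$ (a priori defined on $t(B)$ and $s(B)$ respectively) as functions on all of $\cG$, extended by zero off the unit space $\cG\zero$. Then I would verify the pointwise identity
\[
f \cdot 1_B \;=\; (f \circ t|_B\inv) * 1_B \;=\; 1_B * (f \circ s|_B\inv)
\]
by expanding the convolution $(f_1 * f_2)(g) = \sum_{g = g_1 g_2} f_1(g_1) f_2(g_2)$. Since $f \circ t|_B\inv$ is supported on units, the only possibly nonzero summand in $((f \circ t|_B\inv) * 1_B)(g)$ comes from $g_1 = t(g)$, $g_2 = g$, and this summand survives exactly when $g \in B$, in which case it equals $(f \circ t|_B\inv)(t(g)) = f(g)$ by the injectivity of $t$ on the bisection $B$. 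The second identity is symmetric, using injectivity of $s$ on $B$. Summing over $B \in \cB$ and combining with the first stage gives the three equal Fourier expansions.

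If there is any subtlety, it lies in the bookkeeping: treating $f \circ t|_B\inv$ correctly as a function on $\cG$ supported on the unit space, and translating between the $L^2(\cG, \mu_s)$-norm of $f \cdot 1_B$ and the $L^2(s(B), \mu)$-norm of $f \circ s|_B\inv$ via the disintegration of $\mu_s$ as counting measure along source fibres combined with the bisection property of $B$. No genuine analytic or measure-theoretic difficulty arises, and the argument is essentially a direct unpacking of the definitions.
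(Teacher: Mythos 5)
Your proposal is correct and follows essentially the same route as the paper: decompose $\cG$ into the disjoint bisections of $\cB$, get $f=\sum_B f\cdot 1_B$ in $L^2(\cG,\mu_s)$, and verify the pointwise identity $f\cdot 1_B=(f\circ t|_B\inv)*1_B=1_B*(f\circ s|_B\inv)$ by a direct expansion of the convolution (which the paper leaves as ``a computation shows''). The only cosmetic difference is that you obtain orthogonality directly from disjointness of supports, whereas the paper computes $\langle 1_B,1_C\rangle=\mu\left(\left(C\inv\cdot B\right)\cap\cG\zero\right)$ via the operators $\lambda_B$; both are valid.
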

\begin{proof}
    The orthogonality of the $1_B$ follows from the following computation for bisections $B,C \in \cB$:
    \begin{equation*}
        \langle  1_B, 1_C \rangle = \langle \lambda_B 1_{\cG\zero}, \lambda_C 1_{\cG\zero}  \rangle = \langle \lambda_{C\inv} \lambda_B 1_{\cG\zero},  1_{\cG\zero}  \rangle = \mu\left(\left( C\inv \cdot B\right) \cap \cG\zero\right)
    \end{equation*}
    and $\left(C\inv \cdot B\right) \cap \cG\zero = \emptyset$ whenever $B \neq C$. The decomposition of $f \in L^2(\cG, \mu_s)$ follows from the fact that $(\cG, \mu_s) = \left(\bigsqcup_{B \in \cB} B, \sum_{B \in \cB} \mu_s |_B\right)$. Finally, a computation shows that for any $f \in \cB$ it holds that
    \begin{equation*}
        f\cdot 1_B  = (f \circ t|_B\inv) \ast 1_B = 1_B \ast (f \circ s|_B\inv).\qedhere
    \end{equation*}
\end{proof}

Before we get to the Fourier decomposition, we recall the definition of $j$ map and the sharp norm $\| \cdot \|^\sharp$. Let $(\cG, \mu)$ be a discrete measured groupoid and denote by
\begin{equation*}
    A(\cG, \mu) := \Span \left\{\lambda_B\ |\ B \subset \cG \text{ is a Borel bisection } \right\},
\end{equation*}
which is called the \emph{Steinberg algebra}. Recall that $A(\cG, \mu)$ is SOT/WOT-dense in $L(\cG)$. One can embed $L(\cG)$ into $L^2(\cG, \mu_s)$ via the map
\begin{equation}\label{eq:j map}
    j: L(\cG) \to L^2(\cG, \mu_s) : a \mapsto j(a) := a 1_{\cG\zero}.
\end{equation}
It is not difficult to check that $j$ is a continuous linear map. To see that $j$ is injective, denote by $E$ the conditional expectation from $L(\cG)$ onto $L^\infty(\cG\zero, \mu)$ and define the state $\phi_\mu$ on $A(\cG,\mu)$ by setting
\begin{equation*}
    \phi_\mu(\lambda_A) = \int E(1_A) \mathrm{d}\mu = \mu(A\cap \cG\zero)
\end{equation*}
whenever $A$ is a Borel subset of $\cG$ with $\mu_s(A) < \infty$. Then $\phi_\mu$ extends to a faithful normal state on $L(G)$ by $\phi_\mu(a) = \int E(a) \mathrm{d}\mu$. Now take $a$ and $b$ in $L(\cG)$ such that $j(a) = j(b)$. Then
\begin{align*}
    \phi_\mu((a-b)^\ast(a-b)) &= \int E((a-b)^\ast(a-b)) \mathrm{d}\mu\\
    &= \int E((a-b)^\ast(a-b))1_{\cG\zero} \mathrm{d}\mu\\
    &= \int E((a-b)^\ast(a-b)1_{\cG\zero}) \mathrm{d}\mu\\
    &= \int E((a-b)^\ast(j(a) - j(b)) \mathrm{d}\mu = 0 \;.
\end{align*}
Hence the faithfulness of $\phi_\mu$ implies that $a-b = 0$ so that $j$ is injective. 

On $A(\cG,\mu)$ one sees that $j$ acts as the identity, by which we mean that $j(\lambda_A) = 1_A$ for all Borel subsets $A \subset \cG$ with $\mu_s(A) < \infty$. In fact, it is not difficult to check that $j(\lambda_A x \lambda_B) = \lambda_A \rho_{B\inv} j(x)$ for all Borel bisections $A, B \subset \cG$ and all $x \in L(\cG)$.

Note that the GNS representation associated to $\phi_\mu$ is exactly the representation of $L(\cG)$ on $L^2(\cG, \mu_s)$ given by \eqref{equation:Left regular rep}. In particular, it holds that
\begin{equation*}
    \phi_\mu(a^*a) = \langle j(a), j(a) \rangle_{L^2}
\end{equation*}
for all $a \in L(\cG)$. We define the \emph{sharp norm} of an element $a \in L(\cG)$ as
\begin{equation}
    \| a \|^\sharp = \sqrt{\phi_\mu(a^*a) + \phi_\mu(aa^*)}.
\end{equation}
By \cite[Chapter III, Proposition 5.3]{Takesaki01} the sharp norm agrees with the $\sigma$-strong $\ast$-topology on bounded sets.

\begin{lemma}\label{Lemma:a^B and a_B}
    Let $(\cG, \mu)$ be a discrete measured groupoid and let $a \in L(\cG)$. For each Borel bisection $B \subset \cG$, the functions
    \begin{equation*}
        a_B := (a 1_{\cG\zero}) \circ s|_B\inv \qquad \text{ and } \qquad a^B := (a 1_{\cG\zero}) \circ t|_B\inv
    \end{equation*}
    belong to $L^\infty(\cG\zero, \mu)$. In particular, $j(a)|_B \in L^\infty(B, \mu_s)$.
\end{lemma}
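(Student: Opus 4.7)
The plan is to first establish the special case $B = \cG\zero$, which says that $j(a)|_{\cG\zero} \in L^\infty(\cG\zero,\mu)$ with $\|j(a)|_{\cG\zero}\|_\infty \le \|a\|$, and then bootstrap to arbitrary bisections by applying this special case to one-sided translates of $a$ by $\lambda_{B\inv}$.

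For the special case, I would let $p \in B(L^2(\cG,\mu_s))$ denote the orthogonal projection onto the closed subspace $L^2(\cG\zero,\mu)$, i.e., pointwise multiplication by $1_{\cG\zero}$. Then $pap$ is a bounded operator on $L^2(\cG\zero,\mu)$ with $\|pap\| \le \|a\|$. For any $F \in L^\infty(\cG\zero,\mu)$, the operator $\rho_F$ on $L^2(\cG,\mu_s)$ is pointwise multiplication by $F \circ s$; it therefore commutes with $p$ and restricts to $L^2(\cG\zero,\mu)$ as the multiplication operator $M_F$. Because $L(\cG)$ and $R(\cG)$ commute, $\rho_F$ also commutes with $a$, and so $pap$ commutes with every $M_F$. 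Since $L^\infty(\cG\zero,\mu)$ is maximal abelian in $B(L^2(\cG\zero,\mu))$, there exists $\phi \in L^\infty(\cG\zero,\mu)$ with $pap = M_\phi$ and $\|\phi\|_\infty \le \|a\|$. Evaluating at $1_{\cG\zero}$ yields $\phi = pap\,1_{\cG\zero} = p\,j(a) = j(a)|_{\cG\zero}$, which settles the special case.

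For a general Borel bisection $B$, I would use the formula $j(\lambda_A x \lambda_C) = \lambda_A \rho_{C\inv} j(x)$ recorded above, combined with the explicit actions of $\lambda_{B\inv}$ and $\rho_B$ on $L^2(\cG,\mu_s)$. A direct computation shows that $j(\lambda_{B\inv} a)|_{\cG\zero}$ equals $a_B$ on $s(B)$ and vanishes elsewhere, while $j(a \lambda_{B\inv})|_{\cG\zero}$ equals $a^B$ on $t(B)$ and vanishes elsewhere. Applying the special case to the elements $\lambda_{B\inv} a$ and $a \lambda_{B\inv}$ of $L(\cG)$, both of operator norm at most $\|a\|$, I conclude that $a_B, a^B \in L^\infty(\cG\zero,\mu)$ with norms at most $\|a\|$. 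The \emph{in particular} assertion then follows since $t|_B\colon B \to t(B)$ is a Borel isomorphism that pushes $\mu_s|_B$ forward to a measure equivalent to $\mu|_{t(B)}$, so $j(a)|_B = a^B \circ t|_B$ lies in $L^\infty(B,\mu_s)$.

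The hard part is the special case. The subtlety is that the projection $p$ onto $L^2(\cG\zero,\mu)$ is not itself in $L(\cG)$ (whereas $\lambda_{\cG\zero}$ is the identity operator), so one cannot deduce $pap \in L^\infty(\cG\zero,\mu)$ by staying inside the algebra. Invoking the commutation with the right-regular copy of $L^\infty(\cG\zero,\mu)$ together with maximal abelianness in $B(L^2(\cG\zero,\mu))$ is the key trick.
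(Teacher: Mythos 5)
Your proof is correct, but it takes a genuinely different route from the paper's. The paper argues by approximation: it first checks the claim directly for elements of the Steinberg algebra $A(\cG,\mu)$ (where $a_B$ is an explicit combination of indicator functions), then uses Kaplansky density to take a bounded net $a_n \to a$ in the SOT, and concludes via the agreement of $\|\cdot\|_2$ and the $\sigma$-strong$\ast$ topology on bounded sets that $a_B$ is an $L^\infty$-limit of the $a_{n,B}$. You instead reduce to the single case $B = \cG\zero$ by replacing $a$ with $\lambda_{B\inv}a$ and $a\lambda_{B\inv}$, and settle that case by compressing to $L^2(\cG\zero,\mu)$ and invoking the commutation $L(\cG)\subseteq R(\cG)'$ together with maximal abelianness of $L^\infty(\cG\zero,\mu)$ in $B(L^2(\cG\zero,\mu))$. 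Your route avoids approximation altogether and delivers the quantitative bound $\|a_B\|_\infty, \|a^B\|_\infty \le \|a\|$ as an immediate byproduct; notably, this bound is exactly the step the paper's proof asserts without justification for Steinberg-algebra elements (``from $\|a\|\le 1$ it follows that $\|a_{n,B}\|_\infty \le 1$''), so your compression argument actually supplies the missing detail there. The paper's approach, on the other hand, stays entirely inside the algebra and rehearses the density technique that recurs in the Fourier-decomposition arguments later in the section. Both are valid; just make sure to flag that the projection $p$ of multiplication by $1_{\cG\zero}$ lies in neither $L(\cG)$ nor $R(\cG)$ (as you do), and that the identity $\mu_s(E)=\mu(s(E))$ for $E$ contained in a bisection, plus nonsingularity, is what makes the final transfer of essential boundedness from $t(B)$ to $B$ legitimate.
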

\begin{proof}
    First we note that if $E \subset \cG$ is a bisection with $\mu_s(E) < \infty$, then
    \begin{equation*}
        (\lambda_E \cdot 1_{\cG\zero}) \circ s|_B\inv = 1_E \circ s|_B\inv = 1_{s(E \cap B)} \in L^\infty(\cG\zero, \mu).
    \end{equation*}
    It follows that for every $a \in A(\cG, \mu)$ we have that $a_B \in L^\infty(\cG\zero, \mu)$.

    Now suppose $a \in L(\cG)$ with $\|a \| \leq 1$. Let $(a_n)_n$ be a sequence in $A(\cG, \mu)$ be a sequence with $\|a_n\| \leq 1$ for all $n$ and such that $a_n \to a$ in the SOT. By definition, we get that $a_n 1_{\cG\zero} \to a 1_{\cG\zero}$ in $L^2$-norm. Then also $a_n 1_{\cG\zero} \circ s|_B\inv \to a 1_{\cG\zero} \circ s|_B\inv$ in $L^2$-norm. Now it suffices to note that from $\|a\| \leq 1$ it follows that $\|a_{n,B} \|_\infty = \|a_n 1_{\cG\zero} \circ s|_B\inv\|_\infty \leq 1$. Then, since the $\|\cdot \|_2$ and the $\sigma$-strong* topology agree on bounded sets, we get that $a_{n,B} \to a_B$ in the SOT, which implies that $a_B \in L^\infty(\cG\zero, \mu)$ since all $a_{n,B}\in L^\infty(\cG\zero, \mu)$ by the first part of the proof.

    Finally, to see that also $a^B \in L^\infty(\cG\zero, \mu)$, simply note that $a^B = a_B \circ \sigma_B\inv$.
    \end{proof}

\begin{proposition}\label{Proposition:Fourier decomposition}
    Let $(\cG, \mu)$ be a discrete measured groupoid and fix a symmetric basis $\cB$ of $\cG$. Denote by $E: L(\cG) \to L^\infty(\cG\zero, \mu)$ the conditional expectation. Any element of $a \in L(\cG)$ admits a Fourier decomposition
    \begin{equation*}
        a = \sum_{B \in \cB} E(a \lambda_B^*) \lambda_B
    \end{equation*}
    which converges under the sharp norm. Furthermore, the Fourier coefficients satisfy the identity
    \begin{equation*}
        E(a \lambda_B^*) = a^B
    \end{equation*}
    for all $B \in \cB$. In particular, it holds that $\sum_B \|E(a \lambda_B^*) \|_2^2 < \infty$.
\end{proposition}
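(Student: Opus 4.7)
My plan is first to identify the Fourier coefficients, then to establish $L^2$-norm convergence, and finally to upgrade to sharp-norm convergence by a symmetry argument. To identify $E(a\lambda_B^*) = a^B$, I apply the relation $j(\lambda_A x \lambda_C) = \lambda_A \rho_{C\inv} j(x)$ recorded above with $A = \cG\zero$ and $C = B\inv$, obtaining $j(a \lambda_B^*) = \rho_B j(a)$; evaluating at $x \in \cG\zero$ yields $\rho_B j(a)(x) = j(a)(t|_B\inv(x)) = a^B(x)$, and since $E(a\lambda_B^*) = j(a\lambda_B^*)|_{\cG\zero}$ this gives $E(a\lambda_B^*) = a^B$. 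Next, since $a^B \in L^\infty(\cG\zero, \mu)$ acts on $L^2(\cG, \mu_s)$ by multiplication by $a^B \circ t$, and $\lambda_B 1_{\cG\zero} = 1_B$, I get $j(a^B \lambda_B)(g) = a^B(t(g))\, 1_B(g) = j(a)(g)\, 1_B(g)$ using $t|_B\inv(t(g)) = g$ for $g \in B$. Combining this with the orthogonal decomposition $j(a) = \sum_{B \in \cB} j(a) \cdot 1_B$ in $L^2(\cG, \mu_s)$ from Proposition~\ref{Proposition:basis for L2}, and using the isometry $\phi_\mu(x^*x) = \|j(x)\|_{L^2}^2$, gives $\phi_\mu(r_F^* r_F) \to 0$ as the finite set $F$ exhausts $\cB$, where $r_F := a - \sum_{B \in F} a^B \lambda_B$.

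For sharp-norm convergence I must also show $\phi_\mu(r_F r_F^*) \to 0$. Writing $r_F^* = a^* - \sum_{B \in F} \lambda_{B\inv} \overline{a^B}$ and applying the preceding argument to $a^*$ with the symmetric basis $\cB$, I obtain, after reindexing $C = B\inv$, that $\sum_{B \in F} (a^*)^{B\inv} \lambda_{B\inv}$ converges to $a^*$ in the $\phi_\mu(\cdot^*\cdot)^{1/2}$-norm. Thus it suffices to verify the termwise identity $\lambda_{B\inv} \overline{a^B} = (a^*)^{B\inv} \lambda_{B\inv}$ in $L(\cG)$. Applying $j$ to both sides and using the formula $\lambda_C F = (F \circ \sigma_C\inv) \lambda_C$ for $F \in L^\infty(\cG\zero)$ (which is a direct consequence of $\lambda_C F \lambda_C^* = (F \circ \sigma_C\inv) \cdot 1_{t(C)}$), this reduces to the pointwise identity $j(a^*)(g) = \overline{j(a)(g\inv)}$ for $\mu_s$-a.e.\ $g$. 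On the Steinberg algebra this is clear by linearity on the generators $\lambda_A$; for general $a \in L(\cG)$, I approximate $a$ by a sharp-norm convergent sequence from the Steinberg algebra, pass to a subsequence converging pointwise $\mu_s$-a.e., and use the equivalence $\mu_s \sim \mu_t$ together with the fact that inversion intertwines the two measures to pass to the a.e.\ limit on both sides.

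Finally, using the same identity and the change of variable $g \mapsto g\inv$ (which sends $\mu_s$ to $\mu_t$), I obtain $\sum_B \|a^B\|_2^2 = \int_\cG |j(a)|^2 \, d\mu_t = \int_\cG |j(a^*)|^2 \, d\mu_s = \phi_\mu(aa^*) < \infty$, yielding the summability claim. The main obstacle is justifying the identity $j(a^*)(g) = \overline{j(a)(g\inv)}$ for general $a \in L(\cG)$: since the map $f \mapsto \overline{f(\cdot\inv)}$ is not continuous on $L^2(\cG, \mu_s)$ (inversion does not preserve $\mu_s$), the identity cannot be obtained by a direct continuity extension from the Steinberg algebra and must instead be established by careful subsequential pointwise-a.e.\ arguments using the equivalence $\mu_s \sim \mu_t$.
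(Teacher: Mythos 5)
Your proof is correct and follows essentially the same route as the paper's: decompose $j(a)$ along the basis to get convergence of $\phi_\mu(r_\cF^* r_\cF)$, then handle $\phi_\mu(r_\cF r_\cF^*)$ by passing to $a^*$ and using symmetry of $\cB$. The only differences are cosmetic — you identify $E(a\lambda_B^*)=a^B$ up front via $j(a\lambda_B^*)=\rho_B j(a)$ rather than at the end via normality of $E(\,\cdot\,\lambda_B^*)$, and your careful pointwise-a.e.\ justification of $j(a^*)(g)=\overline{j(a)(g\inv)}$ (using $\mu_s\sim\mu_t$ and that inversion exchanges the two measures) correctly fills in the step the paper dismisses as ``a simple calculation.''
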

\begin{proof}
    Let $a \in L(\cG)$ be arbitrary. Since $j(a) \in L^2(\cG, \mu_s)$, by Proposition \ref{Proposition:basis for L2} we know that
    \begin{equation*}
        j(a) = \sum_{B \in \cB} a^B \ast 1_B
    \end{equation*}
    with $a^B$ as in Lemma \ref{Lemma:a^B and a_B}. For any finite set $\cF \subset \cB$, set
    \begin{equation*}
        a_\cF := \sum_{B \in \cF} a^B\lambda_B.
    \end{equation*}
    We claim that the net $a_\cF$ converges to $a$ in the sharp norm. We first show that $\phi_\mu((a-a_\cF)^*(a- a_\cF)) \to 0$ as $\cF$ tends to infinity. This follows from noting that
    \begin{align*}
        \phi_\mu((a-a_\cF)^*(a- a_\cF)) = \langle j(a-a_\cF), j(a-a_\cF) \rangle &= \left\| \sum_{B \in \cB \setminus \cF} a^B \ast 1_B \right\|_2
    \end{align*}
    To see that $\phi_\mu((a-a_\cF)(a-a_\cF)^*)) \to 0$ as $\cF$ tends to infinity, note that 
    \begin{equation*}
        (a-a_\cF)(a-a_\cF)^* = (a^* - a_\cF^*)^*(a^* - a_\cF^*)
    \end{equation*}
    so it suffices to check that $(a_\cF)^*$ gives the appropriate Fourier decomposition of $a^*$. To this end, a simple calculation shows that
    \begin{equation}
        (a^*)^B = \overline{a^B} \circ \sigma_{B\inv}\inv
    \end{equation}
    for every $B \in \cB$. Then we see that
    \begin{equation}
        a_\cF^* = \sum_{\cB \in \cF} \lambda_{B\inv} \overline{a^B} = \overline{a^B} \circ \sigma_{B\inv}\inv \lambda_{B\inv}.
    \end{equation}
    Since the basis $\cB$ is symmetric, we see that $(a_\cF)^*$ and $(a^*)_\cF$ agree in the limit, which is what we needed to show.

    Finally, to see that the Fourier coefficients satisfy $E(a\lambda_B^*) = a^B$, fix a $B \in \cB$ and observe that whenever $\cF \subset \cB$ is a finite set containing $B$, it holds that $E(a_\cF \lambda_B^*) = a_\cF^B$. The $E( \cdot \lambda_B^*)$ is a normal map, hence $\| \cdot \|^\sharp$-continuous on bounded subsets. Since the net $(a_\cF)$ is bounded in sharp norm by $\|a\|^\sharp$, we indeed get that $(a\lambda_B^*) = \lim_\cF E(a_\cF \lambda_B^*) = a^B$.
\end{proof}

\begin{remark}
    Proposition \ref{Proposition:Fourier decomposition} remains valid if the basis $\cB$ is not symmetric, but in that case some of the computations are more involved.
\end{remark}

\begin{lemma}
    Let $\cG$ be a discrete Borel groupoid and let $\cB$ be a basis for $\cG$. Suppose $C \in \cB$ is a bisection. The set of conjugates $C\cB C\inv = \{CBC\inv\ |\ B \in \cB \}$ also forms a basis for $C \cG C^{-1}$. If $\cB$ is symmetric, then so is $C\cB C\inv$.
\end{lemma}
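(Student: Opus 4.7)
The plan is to verify each of the four axioms in the definition of (symmetric) basis for the family $\{C \cdot B \cdot C\inv : B \in \cB\}$, where I use the notation $A \cdot B$ from the preliminaries. First I would unpack the structure of $C \cdot \cG \cdot C\inv$: its elements are of the form $c_1 g c_2\inv$ with $c_1, c_2 \in C$ and $g \in \cG$ satisfying $s(c_1) = t(g)$ and $s(g) = s(c_2)$. A quick check shows that this is a subgroupoid of $\cG$ whose unit space is $t(C)$, since $c_1 g c_2\inv \cdot (c_1 g c_2\inv)\inv = c_1 c_1\inv = t(c_1) \in t(C)$. Taking $B = \cG\zero$ (which lies in $\cB$ by hypothesis) gives $C \cdot \cG\zero \cdot C\inv = \{c c\inv : c \in C\} = t(C)$, so the unit space of the ambient subgroupoid does belong to $C\cB C\inv$.

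Next I would verify that each $C \cdot B \cdot C\inv$ is a Borel bisection. Borelness is automatic because composition and inversion are Borel maps of the groupoid, so this set is the Borel image of the appropriate fibered product. For the bisection property, given the source $t(c_2)$ of an element $c_1 b c_2\inv$, the fact that $C$ is a bisection pins down $c_2$ uniquely via the target map; then $B$ being a bisection pins down the unique $b \in B$ with $s(b) = s(c_2)$; and then $C$ being a bisection pins down $c_1$ via $s(c_1) = t(b)$. The argument for injectivity of the target map on $C \cdot B \cdot C\inv$ is entirely symmetric.

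For mutual disjointness, I would note that if $c_1 b c_2\inv = c_3 b' c_4\inv$ with $b \in B$ and $b' \in B'$ and the $c_i \in C$, then comparing targets and sources and using the bisection property of $C$ forces $c_1 = c_3$ and $c_2 = c_4$, whence $b = b'$, and then $B = B'$ by the disjointness of the original basis $\cB$. Covering is immediate: any element $c_1 g c_2\inv \in C \cdot \cG \cdot C\inv$ has $g$ lying in the unique $B \in \cB$ containing it, and then the element lies in $C \cdot B \cdot C\inv$. Finally, symmetry follows from the identity $(C \cdot B \cdot C\inv)\inv = C \cdot B\inv \cdot C\inv$, so whenever $\cB$ is symmetric the conjugated family $C\cB C\inv$ is closed under inversion as well.

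I do not expect a serious obstacle here: the entire proof is bookkeeping driven by the fact that source and target are injective on $C$, which converts all ambiguity in products of the form $c_1 g c_2\inv$ into data that can be recovered from the source and target of the full product. The only mild care needed is to handle the partial-bisection case — the hypothesis does not say that $C$ is a global bisection — so one has to keep track of the composability constraints $s(c_1) = t(g)$ and $s(g) = s(c_2)$ throughout, but these never cause any real trouble.
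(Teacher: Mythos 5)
Your proof is correct and follows essentially the same route as the paper's: the key step in both is that injectivity of the source and target maps on the bisection $C$ forces the conjugating elements in an equality $c_1 b c_2\inv = c_3 b' c_4\inv$ to coincide, giving mutual disjointness, with covering immediate. You are somewhat more thorough than the paper (explicitly checking the bisection property of each $CBC\inv$, the unit space $t(C)$, and closure under inversion), but these are the same routine verifications the paper leaves implicit.
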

\begin{proof}
 First let us show that elements of $C\cB C^{-1}$ are mutually disjoint. Let $B,D \in \cB$. Suppose that $a,b,c,d \in C$ and $x \in B$ and $y \in D$ such that $axb^{-1} = cyd^{-1}$. Note in particular that $t(a) = t(c)$ and that $t(b) = t(d)$. However, each $a,b,c,d$ belong to the same bisection. By injectivity of the target map, we get that $a = c$ and $b=d$. Thus, $x = a\inv a x b\inv b = a\inv c y d\inv b = c\inv c y d\inv d = y \in B \cap D$. Therefore, $B = D$. Finally, observe that
    \begin{equation*}
        \bigcup_B CBC^{-1} = C \left(\bigcup_B B \right) C^{-1} = C \cG C^{-1}\;. \qedhere
    \end{equation*}
\end{proof}

\begin{lemma}\label{Lemma:Basis of Iso}
    Let $\cG$ be a discrete Borel groupoid. There is a countable set $\cD$ of mutually disjoint bisections of $\cG$ such that for any basis $\cB$ of $\Iso(\cG)$, the set $\cB \cup \cD$ is a basis for $\cG$.
\end{lemma}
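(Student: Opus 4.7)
The plan is to build $\cD$ as a Borel partition of $\cG\setminus\Iso(\cG)$ into bisections, using the same Lusin--Novikov argument that appears in the proof of Proposition \ref{Proposition:basis of groupoid}. This is possible because $\Iso(\cG) = \{g\in\cG : s(g) = t(g)\}$ is a Borel subset of $\cG$ (it is the preimage of the diagonal under the Borel map $(s,t):\cG\to\cG\zero\times\cG\zero$), so $\cG\setminus\Iso(\cG)$ is itself a standard Borel space. Crucially the construction of $\cD$ only depends on this complement, not on any basis chosen for $\Iso(\cG)$.

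Concretely, I would first apply \cite[Theorem~18.10]{Kec95} to the countable-to-one Borel map $s\restriction_{\cG\setminus\Iso(\cG)}$ to obtain a countable Borel partition $\cG\setminus\Iso(\cG) = \bigsqcup_n U_n$ with $s\restriction_{U_n}$ injective for each $n$, and similarly a partition $\cG\setminus\Iso(\cG) = \bigsqcup_m V_m$ with $t\restriction_{V_m}$ injective. Then I would set
\begin{equation*}
    \cD := \{\,U_n\cap V_m \mid n,m\in\N,\ U_n\cap V_m\neq\emptyset\,\}.
\end{equation*}
Each element of $\cD$ is a Borel bisection of $\cG$ (both $s$ and $t$ are injective on it), the elements of $\cD$ are mutually disjoint, and $\bigcup_{D\in\cD}D = \cG\setminus\Iso(\cG)$.

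To finish, I would check that for any basis $\cB$ of $\Iso(\cG)$ the collection $\cB\cup\cD$ satisfies the three conditions of a basis of $\cG$. Mutual disjointness is automatic: distinct members of $\cB$ are disjoint since $\cB$ is a basis, distinct members of $\cD$ are disjoint by construction, and any $B\in\cB$ is contained in $\Iso(\cG)$ while any $D\in\cD$ is contained in $\cG\setminus\Iso(\cG)$. The unit space $\cG\zero$ belongs to $\cB$ since $\Iso(\cG)\zero = \cG\zero$ and $\cB$ is a basis of $\Iso(\cG)$. Finally, $\bigcup(\cB\cup\cD) = \Iso(\cG)\sqcup(\cG\setminus\Iso(\cG)) = \cG$.

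I do not anticipate any real obstacle: the only point worth flagging is the Borel measurability of $\Iso(\cG)$, which is immediate, and the fact that $\cD$ is produced independently of $\cB$, which is built into the construction since $\cD$ only refers to $\cG\setminus\Iso(\cG)$.
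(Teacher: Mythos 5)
Your proof is correct and is essentially the paper's own argument: the paper's proof simply states that the lemma follows by a straightforward modification of the Lusin--Novikov construction in Proposition \ref{Proposition:basis of groupoid}, and your write-up is precisely that modification carried out on $\cG\setminus\Iso(\cG)$, with all the required properties of $\cB\cup\cD$ verified.
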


\begin{proof}
    This is a straightforwared modification of the proof of Proposition \ref{Proposition:basis of groupoid}.
\end{proof}

\begin{lemma}\label{Lemma:Fourier decomp when support on Iso}
    Let $(\cG, \mu)$ be a discrete measured groupoid and let $\cB$ be a basis for $\Iso(\cG)$. For every $a \in L(\cG)$ such that $j(a)$ is supported on $\Iso(\cG)$, we have the Fourier decomposition
    \begin{equation*}
        a = \sum_{B \in \cB} E(a\lambda_B^*) \lambda_B.
    \end{equation*}
\end{lemma}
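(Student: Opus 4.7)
The plan is to enlarge $\cB$ to a basis of the entire groupoid, apply the Fourier decomposition of Proposition \ref{Proposition:Fourier decomposition} there, and then use the support hypothesis on $j(a)$ to kill off the extra Fourier coefficients.

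First I would invoke Lemma \ref{Lemma:Basis of Iso} to obtain a countable family $\cD$ of mutually disjoint bisections such that $\cB \cup \cD$ is a basis for $\cG$. Because $\cB$ already covers $\Iso(\cG)$ and $\cB \cup \cD$ is a disjoint cover of $\cG$, every $D \in \cD$ is disjoint from $\Iso(\cG)$; in particular $s(g) \neq t(g)$ for every $g \in D$. Appealing to Proposition \ref{Proposition:Fourier decomposition} (together with the remark immediately following it, which removes the need for symmetry) yields the decomposition
\begin{equation*}
    a = \sum_{B \in \cB} E(a\lambda_B^*) \lambda_B + \sum_{D \in \cD} E(a\lambda_D^*) \lambda_D,
\end{equation*}
converging in sharp norm.

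The task then reduces to showing that $E(a\lambda_D^*) = 0$ for every $D \in \cD$. By the coefficient identity from Proposition \ref{Proposition:Fourier decomposition} and the formula in Lemma \ref{Lemma:a^B and a_B}, one has $E(a\lambda_D^*) = a^D = j(a) \circ t|_D^{-1}$ as an element of $L^\infty(t(D),\mu)$. For any $x \in t(D)$, the point $t|_D^{-1}(x)$ lies in $D$ and hence outside $\Iso(\cG)$, so the hypothesis that $\supp j(a) \subset \Iso(\cG)$ forces $j(a)(t|_D^{-1}(x)) = 0$. Thus $a^D = 0$, eliminating the second sum and leaving exactly $a = \sum_{B \in \cB} E(a\lambda_B^*)\lambda_B$.

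The argument is essentially a bookkeeping exercise on top of earlier results, so there is no serious obstacle; the only point that warrants care is citing the remark after Proposition \ref{Proposition:Fourier decomposition} to justify using the non-symmetric basis $\cB \cup \cD$, since the given $\cB$ need not be symmetric in $\Iso(\cG)$.
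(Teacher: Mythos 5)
Your proposal is correct and takes the same route as the paper, whose entire proof is the one-line remark that the lemma follows from Proposition \ref{Proposition:Fourier decomposition} together with Lemma \ref{Lemma:Basis of Iso}. You have merely supplied the details the authors leave implicit---extending $\cB$ to a basis $\cB\cup\cD$ of $\cG$, noting each $D\in\cD$ is disjoint from $\Iso(\cG)$, and using $E(a\lambda_D^*)=a^D=j(a)\circ t|_D\inv=0$ to discard the extra terms---which is exactly the intended argument.
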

\begin{proof}
    This follows easily from Proposition \ref{Proposition:Fourier decomposition} together with Lemma \ref{Lemma:Basis of Iso}.
\end{proof}

\begin{lemma}\label{Lemma:support j(a)}
    Let $(\cG, \mu)$ be a discrete measured groupoid. If $a \in \cZ(L(\cG))$, then $j(a)$ is supported on $\Iso(\cG)$.
\end{lemma}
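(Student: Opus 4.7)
The plan is to exploit two different commutation relations enjoyed by any $a \in \cZ(L(\cG))$: centrality in $L(\cG)$ gives $aF = Fa$ for every $F \in L^\infty(\cG\zero, \mu)$, while the (trivial) inclusion $L(\cG) \subset R(\cG)'$ gives $a\rho_F = \rho_F a$ for the corresponding element $\rho_F \in R(\cG)$. I will translate both into identities on $j(a) \in L^2(\cG, \mu_s)$ and then extract the support of $j(a)$ using a countable separating family on the standard Borel space $\cG\zero$.

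The preliminary bookkeeping is to unwind how these operators act on $L^2(\cG, \mu_s)$. For a Borel subset $A \subset \cG\zero$, the formula $\lambda_A 1_E = 1_{A \cdot E}$ just isolates $\{g \in E : t(g) \in A\}$, so $\lambda_A$ acts as multiplication by $1_A \circ t$; similarly $\rho_A$ acts as multiplication by $1_A \circ s$. Passing to SOT-limits, the embedded element $F \in L^\infty(\cG\zero, \mu) \subset L(\cG)$ acts on $L^2(\cG, \mu_s)$ by pointwise multiplication by $F \circ t$, while $\rho_F \in R(\cG)$ acts by pointwise multiplication by $F \circ s$. Since both $s$ and $t$ restrict to the identity on $\cG\zero$, the two vectors $F \cdot 1_{\cG\zero}$ and $\rho_F \cdot 1_{\cG\zero}$ coincide with the same element $\tilde F \in L^2(\cG, \mu_s)$, namely the extension of $F$ by zero outside $\cG\zero$.

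Now applying $Fa = aF$ to $1_{\cG\zero}$ yields $(F \circ t) \cdot j(a) = a\tilde F$, and applying $\rho_F a = a\rho_F$ to $1_{\cG\zero}$ yields $(F \circ s) \cdot j(a) = a\tilde F$. Subtracting,
\begin{equation*}
    (F \circ t - F \circ s) \cdot j(a) \;=\; 0 \quad \text{in } L^2(\cG, \mu_s)
\end{equation*}
for every $F \in L^\infty(\cG\zero, \mu)$. To conclude, I pick a countable family $(F_n)_n$ of bounded Borel functions on $\cG\zero$ that separates points; such a family exists because $\cG\zero$ is a standard Borel space. Each $F_n$ produces a $\mu_s$-null set $N_n \subset \cG$ off of which $(F_n \circ t - F_n \circ s) \cdot j(a)$ vanishes pointwise, so outside $N := \bigcup_n N_n$ any $g$ with $j(a)(g) \neq 0$ must satisfy $F_n(t(g)) = F_n(s(g))$ for all $n$ and hence $s(g) = t(g)$, i.e.\ $g \in \Iso(\cG)$. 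I do not foresee a significant obstacle here; the only point that warrants any care is choosing honest Borel representatives of the $F_n$ so that the pointwise separation argument is well-posed, which is routine on a standard Borel space.
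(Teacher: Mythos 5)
Your argument is correct. Both proofs ultimately rest on the same identity: commuting $a$ past the diagonal subalgebra yields $(F\circ t)\cdot j(a) = (F\circ s)\cdot j(a)$ for all $F\in L^\infty(\cG\zero,\mu)$ (in the paper this appears coordinate-wise as $a_B 1_F = (1_F\circ\sigma_B)a_B$, obtained from $j(\lambda_F a)=j(a\lambda_F)$ via $j(x\lambda_F)=\rho_{F}j(x)$, which is the same use of the right regular representation that you make explicit through $L(\cG)\subset R(\cG)'$). Where you genuinely diverge is in how you extract the support statement: the paper fixes a basis $\cB$ of $\cG$, runs a proof by contradiction on a single basis element $B_0$ meeting the putative support outside $\Iso(\cG)$, and varies $F$ over all Borel subsets of $s(E\cap B_0)$ to force $\sigma_{B_0}=\mathrm{id}$ there; you instead dispense with the Fourier/basis machinery entirely and conclude in one stroke from a countable point-separating family of Borel functions on the standard Borel space $\cG\zero$. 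Your route is shorter and more self-contained (it needs only the trivial inclusion $L(\cG)\subset R(\cG)'$ and the action of $\lambda_F$, $\rho_F$ for $F\subset\cG\zero$ by multiplication with $F\circ t$, $F\circ s$), at the cost of invoking the separability of the Borel structure explicitly; the paper's version keeps everything inside the basis formalism it has already set up and reuses in the subsequent lemmas. One tiny point of hygiene, which you already flag: since $j(a)$ is only an $L^2$-class, one should fix a Borel representative once and collect the countably many null sets $N_n$ before reading off the pointwise conclusion — exactly as you do.
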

\begin{proof}
    Let $a$ be a central element of $L(\cG)$. Fix a basis $\cB$ of $\cG$ and decompose $j(a)$ as
    \begin{equation}
        j(a) = \sum_{B \in \cB} 1_B \ast a_B.
    \end{equation}
    We argue by contradiction. Suppose $j(a)$ is not supported on $\Iso(\cG)$. Then we find a non-null Borel subset $E \subset \cG \setminus \Iso(\cG)$ such that $j(a)(g) \neq 0$ for all $g \in E$. Since $\cB$ is countable and all $B \in \cB$ are mutually disjoint, there is a $B_0 \in \cB$ such that $\mu_s(E \cap B_0) >0$. Note that for every $x \in s(E \cap B_0)$ we have that $a_{B_0}(x) \neq 0$. By the centrality of $a$, we get for any Borel $F \subset s(E \cap B_0)$ that $j(\lambda_F a) = j(a \lambda_F)$. Since $F \subset \cG\zero$ and $a_B \in L^\infty(\cG\zero, \mu)$, a calculation shows 
    \begin{equation*}
         j(\lambda_F a) = \sum_{B \in \cB} 1_{F \cdot B} \ast a_B = \sum_B 1_B \ast (1_F \circ \sigma_B)  a_B.
    \end{equation*}
    On the other hand, $j(a \lambda_F) = \sum_{B \in \cB} 1_B \ast  a_B 1_F$. Since $a_B  1_F$ and $(1_F \circ \sigma_B)  a_B$ are both elements of $L^\infty(\cG\zero, \mu)$, looking at the restriction of $j(\lambda_F a)$ on each basis element gives us the identity $a_B  1_F = (1_F \circ \sigma_B)  a_B$ for all $B \in \cB$. Hence in particular $1_F =(1_F \circ \sigma_{B_0})$, meaning that for any $x \in s(E \cap B_0)$, $x \in F$ if and only if $\sigma_{B_0}(x) \in F$. Since this holds for any Borel subset $F \subset s(E \cap B_0)$, we get that $x = t \circ s|_{B_0}\inv (x)$ for all $x \in s(E \cap B_0)$. In other words, $E \cap B_0 \subset \Iso(\cG)$, which is a contradiction.
\end{proof}

\begin{lemma}\label{Lemma:j(a) conj invariant}
    Let $(\cG, \mu)$ be a discrete measured groupoid. If $a \in \cZ(L(\cG))$, then $j(a)(ghg\inv) = j(a)(h)$ for almost every $h \in \Iso(\cG)$ and all $g \in \cG$ with $s(g) = s(h)$.
\end{lemma}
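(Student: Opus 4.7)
The plan is to translate the centrality of $a$ into an identity between $\lambda_C j(a)$ and $\rho_{C\inv}j(a)$ for every Borel bisection $C$, and then evaluate this identity pointwise on elements of the form $c h$ with $h \in \Iso(\cG)$.

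First I would start from the fact that, for every Borel bisection $C \subset \cG$, centrality of $a$ gives $\lambda_C a = a \lambda_C$. Applying the map $j$ and using the identity $j(\lambda_A x \lambda_B) = \lambda_A \rho_{B\inv} j(x)$ recalled just after \eqref{eq:j map} (with $A$ or $B$ set equal to $\cG\zero$), one obtains
\begin{equation*}
    \lambda_C\, j(a) \;=\; \rho_{C\inv}\, j(a) \qquad \text{in } L^2(\cG, \mu_s).
\end{equation*}
Next I would evaluate both sides at points of the form $g = c h$ where $c \in C$ and $h \in \Iso(\cG)$ with $s(h) = s(c)$. Using the definitions of $\lambda_C$ and $\rho_{C\inv}$ on indicator functions of bisections (which extend by linearity and continuity to $L^2$), and noting that $C$ is a bisection so the unique element of $C$ with target $t(ch) = t(c)$ is $c$ itself, and similarly the unique element with source $s(ch) = s(c)$ is $c$, I get
\begin{equation*}
    (\lambda_C\, j(a))(ch) = j(a)(h) \qquad \text{and} \qquad (\rho_{C\inv}\, j(a))(ch) = j(a)(c h c\inv).
\end{equation*}

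The main obstacle is packaging the resulting almost-everywhere statements into a single null set of exceptional $h$ that works for all $g \in \cG$ with $s(g) = s(h)$. For this I would fix a countable basis $\cB$ of $\cG$ as in Proposition~\ref{Proposition:basis of groupoid}. For each $B \in \cB$, the map
\begin{equation*}
    \Iso(\cG) \cap s\inv(s(B)) \longrightarrow B \cdot \Iso(\cG), \qquad h \longmapsto \psi_B(s(h))\, h,
\end{equation*}
where $\psi_B : s(B) \to B$ is the Borel inverse of $s|_B$, is a measure-preserving Borel isomorphism (with respect to the measures induced by $\mu_s$ on both sides, since both $h$ and $\psi_B(s(h))h$ have source $s(h)$). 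The $L^2$-equality above therefore gives a $\mu_s$-null set $N_B \subset \Iso(\cG)$ such that $j(a)(h) = j(a)(b h b\inv)$ for every $h \in \Iso(\cG) \cap s\inv(s(B)) \setminus N_B$ and $b = \psi_B(s(h))$.

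Finally, set $N = \bigcup_{B \in \cB} N_B$, which is still $\mu_s$-null in $\Iso(\cG)$. For any $h \in \Iso(\cG) \setminus N$ and any $g \in \cG$ with $s(g) = s(h)$, let $B \in \cB$ be the unique basis element containing $g$; then $g = \psi_B(s(h))$ and the previous step yields $j(a)(g h g\inv) = j(a)(h)$. This gives exactly the desired conclusion.
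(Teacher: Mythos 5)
Your proof is correct, and it takes a recognizably different route from the paper's. Both arguments ultimately rest on the same two ingredients --- centrality of $a$ and the intertwining relation $j(\lambda_A x \lambda_B) = \lambda_A \rho_{B\inv} j(x)$ --- but the paper passes through the Fourier machinery of Section~\ref{Section:Basis, Fourier, j map}: it rewrites $\lambda_B a \lambda_{B\inv} = \lambda_{t(B)} a$, expands both sides with respect to a basis $\cB$ of $\Iso(\cG)$ and the conjugated basis $B\cB B\inv$, and identifies the Fourier coefficients to get $a^{BCB\inv} = a^C \circ \sigma_B\inv$, which is then evaluated at $t(ghg\inv)$. You instead work directly with the $L^2$ function $j(a)$: the identity $\lambda_C\, j(a) = \rho_{C\inv}\, j(a)$, evaluated at points $ch$, immediately yields $j(a)(h) = j(a)(chc\inv)$, and the parametrization $h \mapsto \psi_B(s(h))h$ of $B \cdot \Iso(\cG)$ (a fiberwise injection over $s$, hence $\mu_s$-preserving) converts the almost-everywhere statement on $\cG$ into one on $\Iso(\cG)$. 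Your version is more elementary in that it avoids the convergence and coefficient-identification steps of Proposition~\ref{Proposition:Fourier decomposition}, and it is more explicit about the quantifier structure: by taking the union of the null sets $N_B$ over a countable basis you produce a single conull set of $h$ that works simultaneously for all $g$ with $s(g) = s(h)$, a point the paper's proof (which fixes $g$ and $h$ at the outset) leaves implicit. The paper's approach, on the other hand, produces along the way the coefficient identity $a^{BCB\inv} = a^C \circ \sigma_B\inv$, which is exactly the form reused in the twisted analogue (Lemma~\ref{lemma:j(a) con inv twisted}); your pointwise argument would adapt there too, at the cost of tracking the cocycle factors by hand.
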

\begin{proof}
Fix a central element $a \in L(\cG)$ and elements $g \in \cG$ and $h \in \Iso(\cG)$. Take bisections $B \subset \cG$ and $C \subset \Iso(\cG)$ containing $g$ and $h$ respectively. Fix a basis $\cB$ of $\Iso(\cG)$ containing $C$.

One the one hand, we have
\begin{equation*}
    j(\lambda_B a \lambda_{B\inv}) = \sum_{D \in \cB} (a^D \circ \sigma_B\inv) \ast 1_{BDB\inv}.
\end{equation*}
One the other hand, note that $B\cB B\inv$ is a basis of $\Iso(\cG)_{t(B)}$, which is the subgroupoid of $\Iso(\cG)$ with unit space $t(B)$. Thus we have the decomposition
\begin{equation*}
    j(\lambda_{t(B)} a) = \sum_{D \in \cB} a^{BDB\inv} \ast 1_{BDB\inv}.
\end{equation*}
Since $a$ is central, $\lambda_B a \lambda_{B\inv} = \lambda_B \lambda_{B\inv} a = \lambda_{t(B)} a$. Looking at the base coordinate $C$, this in particular means that
\begin{equation*}
    a^{BCB\inv} = a^C \circ \sigma_B\inv = a^C \circ s \circ t|_B\inv.
\end{equation*}
Now note that 
\begin{equation}
    (a^C \circ s \circ t|_B\inv)(t(ghg\inv)) = a^C(s(g)) = a^C(t(h)),
\end{equation}
hence we get that $a^{BCB\inv} (t(ghg\inv)) = a^C(t(h))$. Now it suffices to note that whenever an element $k \in \cG$ lies in some bisection $E$, one has that $j(a)(k) = a^E \ast 1_E(k) = a^E(t(k))$.
\end{proof}

\section{Factoriality of the untwisted groupoid von Neumann algebra}\label{Section:Factoriality untwisted}

In this section we prove Theorem \ref{theorem:Factoriality untwisted}. First we note that for any Borel subset of $\Iso(\cG)$, its `conjugacy class' is a Borel set.

\begin{lemma}\label{Lemma:conjugacy class Borel}
    Let $(\cG, \mu)$ be a discrete measured groupoid. For every Borel subset $A \subset \Iso(\cG)$ the set
    \begin{equation}\label{equation:conjugacy class}
        \Omega_A = \bigcup_{g \in \cG} gAg\inv
    \end{equation}
    is Borel.
\end{lemma}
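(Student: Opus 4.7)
The only obstacle is that the defining expression for $\Omega_A$ is an a priori uncountable union indexed by $g \in \cG$, and hence is not automatically Borel. The strategy is to use Proposition \ref{Proposition:basis of groupoid} to replace it by a countable union whose pieces can each be described as preimages of Borel sets under Borel maps.

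Fix a basis $\cB$ of $\cG$. The first step is to prove the identity
\[
    \Omega_A \;=\; \bigcup_{B \in \cB} B \cdot A \cdot B\inv.
\]
The inclusion $\subset$ is immediate since every $g \in \cG$ belongs to some $B \in \cB$ and then $g A g\inv \subset B \cdot A \cdot B\inv$. The reverse inclusion uses crucially that $A \subset \Iso(\cG)$ and that $B\inv$ is a bisection: for any $ghk \in B \cdot A \cdot B\inv$ with $g \in B$, $h \in A$, $k \in B\inv$, composability forces $t(k) = s(h) = t(h) = s(g)$, so by injectivity of the target map on $B\inv$ one has $k = g\inv$. Thus $ghk = ghg\inv \in \Omega_A$.

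It remains to show that $B \cdot A \cdot B\inv$ is Borel for each $B \in \cB$. Since $B$ is a Borel bisection, $t|_B : B \to t(B)$ is a Borel isomorphism onto the Borel set $t(B)$ (by Lusin-Novikov), so its inverse $\tau_B : t(B) \to B$ is Borel. Define the Borel map
\[
    \phi_B : \Iso(\cG) \cap t\inv(t(B)) \to \cG, \qquad \phi_B(k) \;=\; \tau_B(t(k))\inv \, k \, \tau_B(t(k)),
\]
where all compositions make sense because $k \in \Iso(\cG)$ and $t(\tau_B(t(k))) = t(k)$. A direct check shows $\phi_B\inv(A) = B \cdot A \cdot B\inv$: if $k = g h g\inv$ with $g \in B$ and $h \in A$, then $\tau_B(t(k)) = g$ and $\phi_B(k) = h \in A$; conversely, if $\phi_B(k) \in A$, then setting $g = \tau_B(t(k))$ one recovers $k = g \phi_B(k) g\inv \in B \cdot A \cdot B\inv$. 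Hence $B \cdot A \cdot B\inv$ is Borel as the preimage of the Borel set $A$ under the Borel map $\phi_B$, and $\Omega_A$ is Borel as the countable union of these sets. Since the hardest descriptive set-theoretic input, namely the existence of Borel sections of the source and target maps, is already packaged into Proposition \ref{Proposition:basis of groupoid}, no further machinery is needed.
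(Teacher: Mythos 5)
Your proof is correct and follows essentially the same route as the paper: both reduce $\Omega_A$ to the countable union $\bigcup_{B\in\cB} B\cdot A\cdot B\inv$ over a basis and use the bisection property together with $A\subset\Iso(\cG)$ to force the third factor to be $g\inv$. The only difference is that you explicitly verify Borelness of each $B\cdot A\cdot B\inv$ via the map $\phi_B$, a point the paper leaves implicit; this is a welcome extra detail, not a different argument.
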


\begin{proof}
    Let $A$ be any Borel subset of $\Iso(\cG)$ and fix a basis $\cB$ of $\cG$. We claim that
    \begin{equation*}
        \bigcup_{g\in  \cG} gAg\inv = \bigcup_{B \in \cB} BAB\inv,
    \end{equation*}
    from which the lemma follows, since this is a countable union of Borel sets. It is clear that $\Omega_A\subset \bigcup_{B \in \cB} BAB\inv$. We prove the other inclusion. 
    
    Assume that $x\in BAB\inv$ for some $B\in \mathcal{B}$. Then there exist $g,h\in B$ and $y\in A$ such that $x=gyh\inv$. Since $A\subset \Iso(\cG)$ we get that $s(g)=s(h)$ and because $B$ is a bisection we get that $g=h$. This proves that $x\in \Omega_A$. 
\end{proof}

In light of the previous lemma, we can introduce a notion of being icc for groupoids. Recall that a discrete group $\Gamma$ is icc if every element apart from the identity has an infinite conjugacy class. The following definition is a generalization of this property for discrete measured groupoids.

\begin{definition}\label{Definition:Finite measure conjugacy class and icc}
    Let $(\cG, \mu)$ be a discrete measured groupoid. For a Borel subset $A \subset \Iso(\cG)$, we call the set $\Omega_A$ from \eqref{equation:conjugacy class} its conjugacy class. We say $A$ has a \emph{$\mu$-finite conjugacy class} if $\mu_s(\Omega_A) < \infty$.

    We say $(\cG, \mu)$ has \emph{$\mu$-infinite conjugacy classes}, or simply that it is \emph{icc}, if whenever $A \subset \Iso(\cG)$ is a nontrivial bisection (or equivalently, whenever $A$ is a non-null Borel subset) with finite measure conjugacy class, then $A \subset \cG\zero$. 
\end{definition}

\begin{remark}
    Since the source and target map coincide on $\Iso(\cG)$ we get that $\mu_s(\Omega_A)=\mu_t(\Omega_A)$ for every bisection $A\subset \Iso(\cG)$. In partcicular, it does not matter if we use the measure $\mu_s$ or $\mu_t$ in Definition \ref{Definition:Finite measure conjugacy class and icc} above. 
\end{remark}


\begin{lemma}\label{lem:ergodic union of bisections}
    Let $(\cG, \mu)$ be an ergodic measured groupoid and let $A \subset \Iso(\cG)$ be a Borel subset with $\mu$-finite conjugacy class. Then there exist finitely many mutually disjoint bisections $V_1,...,V_k$ of measure one such that $\Omega_A = \bigsqcup_i V_i$ up to a null set. In particular, $\mu_s(\Omega_A) = k$.
\end{lemma}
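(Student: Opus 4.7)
The plan is to exploit ergodicity to show that the fiber size of $s|_{\Omega_A}$ is almost everywhere a constant $k$, and then apply Lusin--Novikov to split $\Omega_A$ into $k$ Borel sections.

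First I would assume $A$ is non-null (otherwise $\Omega_A$ is null and $k = 0$ works). Then I would observe that the set $s(\Omega_A) \subset \cG\zero$ is $\cG$-invariant: given $g \in \Omega_A$ with $s(g) = a$ and any $h \in \cG$ with $s(h) = a$ and $t(h) = b$, the element $hgh\inv$ still lies in $\Omega_A$ (since $\Omega_A$ is conjugation-invariant by its very definition) and has source $b$. By ergodicity of $(\cG, \mu)$, the Borel set $s(\Omega_A)$ must be conull.

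Next I would consider the Borel function $N : \cG\zero \to \N \cup \{\infty\}$ defined by $N(a) = |s\inv(a) \cap \Omega_A|$. The same conjugation argument shows that for any $h \in \cG$ with $s(h) = a$, $t(h) = b$, conjugation by $h$ gives a bijection between $s\inv(a) \cap \Omega_A$ and $s\inv(b) \cap \Omega_A$, so $N$ is $\cG$-invariant. Ergodicity forces $N$ to equal some constant $k \in \N \cup \{\infty\}$ almost everywhere, and the identity
\[ \mu_s(\Omega_A) \;=\; \int_{\cG\zero} N(a) \, \mathrm{d}\mu(a) \;=\; k \]
together with the hypothesis $\mu_s(\Omega_A) < \infty$ forces $k$ to be finite.

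Finally I would fix a Borel linear order $\prec$ on the standard Borel space $\cG$. After removing the null set on which $N \neq k$, each fiber $s\inv(a) \cap \Omega_A$ has exactly $k$ elements, which can be enumerated in $\prec$-increasing order as $\sigma_1(a) \prec \cdots \prec \sigma_k(a)$. A standard application of Lusin--Novikov shows each $\sigma_i : \cG\zero \to \Omega_A$ is a Borel section of $s$, so $V_i := \sigma_i(\cG\zero)$ is a Borel subset of $\cG$ on which $s$ is injective; since $V_i \subset \Iso(\cG)$ we have $s = t$ on $V_i$, so $V_i$ is a bisection, and $\mu_s(V_i) = \mu(s(V_i)) = 1$. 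By construction the $V_i$ partition $\Omega_A$ up to a null set, so $\mu_s(\Omega_A) = k$. The only technical point is the Borel measurability of the enumerating maps $\sigma_i$, but this is precisely the content of Lusin--Novikov.
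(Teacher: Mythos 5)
Your proposal is correct and follows essentially the same route as the paper: show the fiber-counting function $x \mapsto |s\inv(x)\cap\Omega_A|$ is invariant under the orbit equivalence relation, invoke ergodicity to make it a.e.\ equal to a constant $k$ (finite because $\mu_s(\Omega_A)=\int |s\inv(x)\cap\Omega_A|\,\mathrm{d}\mu(x)<\infty$), and then extract $k$ disjoint Borel sections of $s|_{\Omega_A}$ via Lusin--Novikov. The only cosmetic differences are that the paper verifies invariance using global bisections built from the full group $[\cR_\cG]$ and peels off the sections inductively via \cite[Exercise 18.15]{Kec95}, whereas you argue invariance by pointwise conjugation and enumerate each fiber with a Borel linear order; both are valid.
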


\begin{proof}
    Let $A \subset \Iso(\cG)$ be as in the statement of the lemma. Define for each $k \in \N$ the set
    \begin{equation*}
        E_k := \left\{x \in \cG\zero\ \left| |s\inv(x) \cap \Omega_A| = k \right. \right\}.
    \end{equation*}
    Let $q\colon \cR_\cG\rightarrow \cG$ be a Borel section for the projection map $g\mapsto (t(g),s(g))$. Take an arbitrary $\vphi\in [\cR_\cG]$ and define $W=\{q((\vphi(x),x))\mid x\in \cG^{(0)}\}$. Then $W\in [\cG]$ and we have that $t(g)=\vphi(s(g))$ for every $g\in W$. Observe that
    \begin{align}
        W(s^{-1}(x)\cap \Omega_A)W^{-1}=Ws^{-1}(x)W^{-1}\cap \Omega_A = s^{-1}(\vphi(x))\cap \Omega_A\,,
    \end{align}
    so that $\vphi^{-1}(E_k)=E_k$. Thus we have shown that the sets $E_k$ are $\cR_\cG$-invariant. 
    
  By ergodicty we get that for each $k$ either $\mu(E_k) = 0$ or $\mu(E_k) = 1$. Since $\cG\zero = \bigsqcup_k E_k$, we find a $k$ for which $\mu(E_k) = 1$. 
  
  By \cite[Exercise 18.15]{Kec95}, there exists a bisection $V_1 \subseteq \Omega_A \cap s^{-1}(E_k)$ such that $s(V_1) = E_k$. Note that by definition of $E_k$, we have $s((\Omega_A \cap s^{-1}(E_k)) \setminus V_1) = E_k$. If $k=1$, then we are done. Otherwise, we work by induction: assume for some $j \leq k$ that we have mutually disjoint bisections $V_1,\ldots, V_j \subseteq \Omega_A \cap s^{-1}(E_k)$ with $s(V_i) = E_k$ for all $i$. If $j = k$, then we would have the desired bisections. Otherwise, note that the map $s: (\Omega_A \cap s^{-1}(E_k)) \setminus \left(\bigsqcup_{i \leq j} V_i\right) \to E_k$ must be a surjection by definition of $E_k$. Applying \cite[Exercise 18.15]{Kec95} again gives us a bisection $V_{j+1} \subseteq (\Omega_A \cap s^{-1}(E_k)) \setminus \left(\bigsqcup_{i \leq j} V_i\right)$ with $s(V_{j+1}) = E_k$. 
\end{proof}

\begin{remark}
    When the ergodicity assumption in Lemma \ref{lem:ergodic union of bisections} is removed, it is straightforward to come up with a counterexample. The transposition $(1,2)$ has conjugacy class of size ${n\choose 2}$ in the symmetric group $\cS_n$. 
    Then writing  $\rho=\sum_{n=2}^{\infty} 2^{-n}{n \choose 2}\inv<+\infty$ we get that $\mu(n)=\rho\inv2^{-n}{n \choose 2}\inv$ defines a probability measure $\mu$ on $\mathbb{N}_{\geq 2}$.
    Define $\cG=\{(n,\sigma)\mid n\geq 2 \text{ and } \sigma\in \cS_n\}$ with unit space $(\mathbb{N}_{\geq 2},\mu)$ and source and target map given by $(n,\sigma)\mapsto n$. The conjugacy class $\Omega_A$ of the bisection $A=\bigcup_{n\geq 2}\{(n,(1,2))\}$ satisfies $\mu(\Omega_A)<+\infty$ by construction, but it cannot be written as the union of finitely many bisections. 
\end{remark}

It immediately follows from the proof of Lemma \ref{lem:ergodic union of bisections} above that an ergodic discrete measured groupoid $\cG$ is icc if and only if for every bisection $A\subset \Iso(\cG)$ we have that $|s^{-1}(x)\cap\Omega_A|=+\infty$ for a.e.\ $x\in \cG\zero$, thus depending only on the \emph{measure class} of $\mu$. In general we have the following characterization that is independent of the choice of equivalent measure.

\begin{proposition}\label{prop:measure class icc}
Suppose that $(\cG,\mu)$ is a discrete measured groupoid. For a bisection $A\subset \Iso(\cG)$ and an element $x\in \cG\zero$ write $A_x=\bigcup_{g\in t^{-1}(x)}gAg\inv=s^{-1}(x)\cap \Omega_A$. The following are equivalent:
\begin{enumerate}
    \item The groupoid $(\cG,\mu)$ is icc.
    \item For every bisection $A\subset \Iso(\cG)\setminus \cG\zero$ we have that $|A_x|=+\infty$ for a.e.\ $x \in s(A)$.
\end{enumerate}
\end{proposition}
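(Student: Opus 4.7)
The plan is to prove both implications by contrapositive, relying throughout on the disintegration identity
\[
    \mu_s(\Omega_A) = \int_{\cG\zero} |A_x|\, d\mu(x),
\]
which is immediate from $A_x = \Omega_A \cap s\inv(x)$ together with the definition of $\mu_s$.

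The direction $(2)\Rightarrow (1)$ is routine. Suppose $A\subset \Iso(\cG)$ is a non-null bisection that is not essentially contained in $\cG\zero$. Then $A' := A\setminus \cG\zero$ is a non-null bisection in $\Iso(\cG)\setminus \cG\zero$, so applying $(2)$ to $A'$ gives $|A'_x|=+\infty$ for a.e.\ $x\in s(A')$. Combining with $\Omega_{A'}\subset \Omega_A$ and the disintegration identity, this forces $\mu_s(\Omega_A)=+\infty$, which is exactly the contrapositive of $(1)$.

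For $(1)\Rightarrow (2)$, I would argue by contrapositive as well: assume the conclusion of $(2)$ fails for some bisection $A\subset \Iso(\cG)\setminus \cG\zero$, i.e., there is a non-null Borel set $F\subset s(A)$ on which $|A_x|<\infty$. Since $\Omega_A$ is Borel by Lemma \ref{Lemma:conjugacy class Borel} and $s:\Omega_A\to \cG\zero$ is countable-to-one, Lusin--Novikov shows that $x\mapsto |A_x|$ is Borel, so I can partition $F$ into the level sets $F_k:=\{x\in F:|A_x|=k\}$ and pick $k\geq 1$ with $\mu(F_k)>0$. The natural candidate to violate $(1)$ is then $B:=A\cap s\inv(F_k)$, a non-null bisection sitting inside $\Iso(\cG)\setminus \cG\zero$.

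The step I expect to be the main obstacle is establishing $\mu_s(\Omega_B)<\infty$. The key observation should be that whenever $y\in \cG\zero$ satisfies $B_y\neq \emptyset$, there exists $g\in \cG$ with $t(g)=y$ and $s(g)=x_0\in F_k$, and conjugation by $g$ then induces a bijection $B_{x_0}\to B_y$ (with inverse given by conjugation by $g\inv$). Together with the inclusion $B_{x_0}\subset A_{x_0}$, this yields the uniform bound $|B_y|\leq |A_{x_0}|=k$ for every $y\in \cG\zero$, and the disintegration identity then gives $\mu_s(\Omega_B)\leq k<\infty$, contradicting $(1)$ applied to $B$.
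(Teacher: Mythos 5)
Your proposal is correct and follows essentially the same route as the paper: both directions use the disintegration identity $\mu_s(\Omega_A)=\int|A_x|\,d\mu(x)$, and the key step of $(1)\Rightarrow(2)$ is the same conjugation-equivariance argument ($B_{g\cdot x}=gB_xg\inv$) propagating the finiteness bound from $s(B)$ to all of $s(\Omega_B)$; the paper merely uses the sublevel set $\{g\in A:|A_{s(g)}|\le N\}$ where you use an exact level set. No gaps.
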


\begin{proof}
    First assume that every bisection $A\subset \Iso(\cG)\setminus\cG\zero$ satisfies $|A_x|=+\infty$ for a.e.\ $x\in s(A)$. Then for every bisection $A\subset \Iso(\cG)\setminus\cG\zero$ with $\mu_s(A)>0$ we get that
    \begin{align}
        \mu_s(\Omega_A)=\int_{s(\Omega_A)}|A_x|\mathrm{d}\mu(x)\geq \int_{s(A)}|A_x|\mathrm{d}\mu(x)=+\infty\,,
    \end{align}
    so that $\cG$ is icc.

    Conversely, assume that $A\subset \Iso(\cG)\setminus \cG\zero$ is a bisection such that 
    $$\mu(\{x\in s(A)\mid|A_x|<+\infty\})>0\,.$$
    Then there exists $N\in \mathbb{N}$ large enough such that the set $B=\{g \in A\mid |A_{s(g)}|\leq N\}$ satisfies $\mu_s(B)>0$. Because $B\subset A$ we get that $|B_x|\leq |A_x|$ for every $x\in s(B)$. Furthermore, since $B_{k\cdot x}=kB_{x}k\inv$ we get that $|B_x|\leq N$ for every $x\in s(\Omega_B)$. Then we see that
    \begin{align*}
        \mu_{s}(\Omega_{B})=\int_{s(\Omega_B)}|B_x|\mathrm{d}\mu(x)\leq N\,,
    \end{align*}
    showing that $\cG$ is not icc. 
\end{proof}

Now we are ready to proof Theorem \ref{theorem:Factoriality untwisted}.

\begin{proof}[Proof of Theorem \ref{theorem:Factoriality untwisted}]
    First assume that condition 1 fails, so there is a Borel subset $A \subset \cG$ with $\mu_s(\Omega_A) < \infty$ and such that $A$ is not contained in $\cG\zero$. Since $\Omega_A \cdot D = D \cdot \Omega_A$ for all bisections $D \subset \cG$, it follows that $\lambda_{\Omega_A} \in \cZ(L(\cG))$. Since $A \cap \cG\zero \neq A$, it follows that $\lambda_{\Omega_A} \in L(\cG) \setminus L^\infty(\cG\zero, \mu)$.

    Now assume condition 2 fails. Then we find an element $a \in \cZ(L(\cG)) \setminus L^\infty(\cG\zero, \mu)$. Note that $j(a) \neq 0$ and we can assume that $j(a)|_{\cG\zero} = 0$. Write
    \begin{equation*}
        \supp(j(a)) =  \bigcup_{n \geq 1}  \left\{  g \in \cG\  \left|\ |j(a)(g)|^2 \geq \frac{1}{n^2} \right. \right\},
    \end{equation*}
    and fix an $n \in \N$ such that $E:= \left\{ g \in \cG\ |\ |j(a)(g)|^2 \geq \frac{1}{n^2} \right\}$ satisfies $\mu_s(E) > 0$. Note that $\mu_s(E \cap \cG\zero) = 0$. By Lemma \ref{Lemma:support j(a)} we know that $E \subset \Iso(\cG)$ and from Lemma \ref{Lemma:j(a) conj invariant} it follows that $g \cdot E \cdot g\inv \subset E$ for all $g \in \cG$. Finally, to see that $\mu_s(E) < \infty$, note that
    \begin{equation*}
        \mu_s(E) \leq n^2 \int_{\cG} | j(a)|^2 \mathrm{d}\mu_s = n^2 \| j(a)\|_2^2 < \infty.\qedhere
    \end{equation*}
\end{proof}

\begin{proof}[Proof of Corollary \ref{Corollary:Factoriality untwisted}]
    By Theorem \ref{theorem:Factoriality untwisted} it suffices to note that for any discrete measured groupoid $(\cG, \mu)$ it holds that $L^\infty(\cG\zero, \mu)^\cG = \C \cdot 1_{\cG\zero}$ if and only if $\cG$ is ergodic.  \qedhere
\end{proof}

\section{Factoriality of the twisted groupoid von Neumann algebra}\label{Section:Factoriality twisted}

\subsection{Definitions and basic properties}

Let $\cG$ be a discrete borel groupoid. A \emph{$2$-cocycle} on $\cG$ is a Borel map 
\begin{align*}
    \omega: \cG\two \to \T
\end{align*}
such that $\omega(x,yz)\omega(y,z) = \omega(xy,z)\omega(x,y)$ for all $(x,y,z) \in \cG\three$. If moreover $\omega(x,x^{-1}x) = 1 = \omega(xx^{-1},x)$ for all $x \in G$, we say that $\omega$ is \emph{normalized}. We call the pair $(\cG,\omega)$ a \emph{twisted discrete Borel groupoid}.

We say two cocycles $\omega_1$ and $\omega_2$ are \emph{cohomologous} if there is a Borel map $\rho: \cG \to \T$ such that $\omega_1(x,y) = \rho(x)\rho(y)\overline{\rho(xy)} \omega_2(x,y)$ for almost every $(x,y) \in \cG\two$. Every cocycle is cohomologous to a normalized cocycle which moreover satisfies $\omega(x, x\inv) = 1$ for all $x \in \cG$. To see this, first note that for any cocycle $\omega$ the cocycle identity implies that $\omega(x, x\inv x) = \omega(x\inv x, x\inv x)$ and $\omega(x x\inv, x) = \omega(x x\inv, x x\inv)$ for all $x \in \cG$. The cocycle 
\begin{equation*}
    \omega'(x,y) = \overline{\omega(x, x\inv x)}\ \overline{\omega(y, y\inv y)} \omega(xy, (xy)\inv xy) \omega(x,y)
\end{equation*} 
satisfies $\omega'(x\inv x, x\inv x) = 1 = \omega'(xx\inv, xx\inv)$ for all $x \in \cG$ and hence is normalized. Next, suppose $\omega$ is a normalized cocycle and consider
\begin{equation*}
\omega'(x,y)  = (\overline{\omega(x,x\inv)})^{1/2} (\overline{\omega(y, y\inv)})^{1/2} \omega(xy, (xy)\inv)^{1/2} \omega(x,y).
\end{equation*}
Note that $\omega(x, x\inv) = 1$ also implies that $\overline{\omega(x,y)} = \omega(y\inv, x\inv)$. Throughout this sections all cocycles are assumed to satisfy this property.

Let $(\cG, \mu, \omega)$ be a twisted measured discrete groupoid. To define the twisted groupoid von Neumann algebra, we first note the following: whenever $A, B \subset \cG$ are bisections, there is a function $\omega(A, B) \in L^\infty(\cG\zero, \mu)$ defined by
\begin{equation*}
    \omega(A,B)(x) = \begin{cases}
        \omega(a,b) & \text{where } a \in A, b \in B, s(a) = t(b), \text{ and } x = t(a)\\
        0 & \text{if } x \notin t(A \cdot B).
    \end{cases}
\end{equation*}
Now we proceed analogously as in the untwisted case by defining a `projective left regular representation' in the following way: for any Borel bisection $A \subset \cG$ we define the operator $\lambda_A^\omega \in B(L^2(\cG, \mu_s))$ by
\begin{equation*}
    \lambda_A^\omega 1_E(g) = \omega(g_1, g_2) 1_{A \cdot E}(g) \qquad \text{where } g_1 \in A, g_2 \in E \text{ such that } g_1g_2 = g
\end{equation*}
for any subset $E \subset \cG$ with $\mu_s(E) < \infty$. The \emph{(left) twisted groupoid von Neumann algebra} is defined as  $L_\omega(\cG) := \{\lambda_A^\omega\ |\ A \subset \cG \text{ Borel bisection} \}''$. Note that, as in the untwisted case, $L_\omega(\cG)$ includes a copy of $L^\infty(\cG\zero, \mu)$ as a regular subalgebra. For any Borel bisection $A$ one has $(\lambda_A^\omega)^* = \lambda_{A\inv}^\omega$ and one can check that $\lambda_A^\omega \lambda_B^\omega = \omega(A,B)\lambda_{A \cdot B}^\omega$ for any two Borel bisections $A, B \subset \cG$.

Similarly, one can define a projective right regular representation of $\cG$ by defining for any Borel bisection $A \subset \cG$ the operator
\begin{equation}
    \rho_A^\omega 1_E(g) = \overline{\omega(g_2, g_1)}1_{E \cdot A\inv}(g) \qquad \text{where } g_1 \in A\inv, g_2 \in E \text{ such that } g_2g_1 = g.
\end{equation}
One denotes the \emph{right twisted groupoid von Neumann algebra} by $R_\omega(\cG) := \{\rho_A^\omega\ |\ A \subset \cG \text{ is a Borel bisection} \}''$. We again have the relations $(\rho_A^\omega)^* = \rho_{A\inv}^\omega$ and $\rho_A^\omega \rho_B^\omega = \omega(A,B) \rho_{A \cdot B}^\omega$. We now show that the representations $\lambda^\omega$ and $\rho^{\overline{\omega}}$ are commutants of each other. For $f, g \in L^2(\cG, \mu_s)$ we define the twisted convolution product $\ast_\omega$ by
\begin{equation*}
    (f \ast_\omega g)(x) = \sum_{a,b; ab = x} \omega(a, b) f(a) g(b).
\end{equation*}
An application of H\"older's inequality shows that $\|f \ast_\omega g\|_\infty \leq \|f\|_2 \|g\|_2$. In particular, $f \ast_\omega g \in L^\infty(\cG, \mu_s)$.

We say a function $f \in L^2(\cG)$ is a \emph{left convolver} for $(\cG, \omega)$ if $f \ast_\omega g \in L^2(\cG, \mu_s)$ for every $g \in L^2(\cG, \mu_s)$. In that case it follows that $L_f^\omega(g) = f \ast_\omega g$ defines a bounded operator on $L^2(\cG, \mu_s)$. We denote by $LC_\omega(\cG)$ the set consisting of all $L_f^\omega$ for left convolvers $f$. Similarly, one defines the right convolvers $RC_\omega(\cG)$.

\begin{proposition}\label{Proposition:Commutant of left regular representation}
    Let $(\cG, \omega)$ be a twisted discrete measured groupoid. Then $L_\omega(\cG) = \LC_\omega(\cG) = R_{\overline{\omega}}(\cG)'$ and  $R_{\overline{\omega}}(\cG) = \RC_{\overline{\omega}}(\cG) = L_\omega(\cG)'$.
\end{proposition}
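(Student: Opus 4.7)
The proposition consists of two symmetric statements; the plan is to establish the cyclic chain $L_\omega(\cG) \subseteq \LC_\omega(\cG) \subseteq R_{\overline{\omega}}(\cG)' \subseteq L_\omega(\cG)$, and to note that the analogous chain $R_{\overline{\omega}}(\cG) \subseteq \RC_{\overline{\omega}}(\cG) \subseteq L_\omega(\cG)' \subseteq R_{\overline{\omega}}(\cG)$ is proved identically with the roles of source and target swapped.

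For $L_\omega(\cG) \subseteq \LC_\omega(\cG)$, I would first check on generators that $\lambda_A^\omega \cdot 1_E = 1_A \ast_\omega 1_E$ whenever $A$ is a Borel bisection and $E$ a Borel set with $\mu_s(E) < \infty$; this is a direct unraveling of the twisted convolution formula. Then for $a \in L_\omega(\cG)$, setting $f := a \cdot 1_{\cG\zero}$, one gets $a g = f \ast_\omega g$ first on the dense subspace spanned by indicators of finite-measure bisections and then on all of $L^2(\cG, \mu_s)$ by boundedness, so $a = L^\omega_f \in \LC_\omega(\cG)$. The middle inclusion $\LC_\omega(\cG) \subseteq R_{\overline{\omega}}(\cG)'$ is an associativity check: $(f \ast_\omega g) \ast_{\overline{\omega}} h = f \ast_\omega (g \ast_{\overline{\omega}} h)$ collapses to the cocycle identity after the two $\omega$- and $\overline{\omega}$-factors in the middle term cancel.

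The substantive step is $R_{\overline{\omega}}(\cG)' \subseteq L_\omega(\cG)$. Fix $T \in R_{\overline{\omega}}(\cG)'$ and set $f := T 1_{\cG\zero} \in L^2(\cG, \mu_s)$. The normalization $\omega(t(g), g) = 1$ gives the clean identity $\rho_A^{\overline{\omega}} 1_{\cG\zero} = 1_{A\inv}$ for every Borel bisection $A$. Commuting $T$ past $\rho_A^{\overline{\omega}}$ then yields $T 1_{A\inv} = \rho_A^{\overline{\omega}} f$, and a short computation identifies the right-hand side with $(f \ast_\omega 1_{A\inv})$. Because indicators of finite-measure bisections are dense in $L^2(\cG, \mu_s)$, this identifies $T$ with $L^\omega_f \in \LC_\omega(\cG)$, already establishing $R_{\overline{\omega}}(\cG)' = \LC_\omega(\cG)$.

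To upgrade to $T \in L_\omega(\cG)$ I would set up twisted analogs of Lemma \ref{Lemma:a^B and a_B} and Proposition \ref{Proposition:Fourier decomposition}. Fix a symmetric basis $\cB$ of $\cG$ and define the Fourier coefficients $T^B := f \circ t|_B\inv$ for $B \in \cB$. The proof of Lemma \ref{Lemma:a^B and a_B} adapts verbatim, since it only relies on the facts that $\lambda_B^\omega$ is a partial isometry and $T$ is bounded, placing each $T^B$ in $L^\infty(\cG\zero, \mu)$. The partial sums $T_\cF := \sum_{B \in \cF} T^B \lambda_B^\omega$ indexed by finite $\cF \subset \cB$ then lie in $\Span\{\lambda_A^\omega\} \subseteq L_\omega(\cG)$ and converge to $T$ in the sharp norm induced by the state $\phi_\mu(a) := \langle a 1_{\cG\zero}, 1_{\cG\zero}\rangle$, forcing $T \in L_\omega(\cG)$. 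The main obstacle is careful bookkeeping of the cocycle factors in the relation between the Fourier coefficients of $T$ and those of $T^*$, since this relation is needed to control the $\phi_\mu((T - T_\cF)(T - T_\cF)^*)$ half of the sharp-norm convergence; because $|\omega| = 1$, however, none of the $L^2$ or sup-norm estimates are affected, so the symmetric-basis argument of Proposition \ref{Proposition:Fourier decomposition} goes through with the $\omega$-factors tracked explicitly.
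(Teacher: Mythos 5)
Your proposal is correct and follows essentially the same route as the paper: the two substantive ingredients --- identifying $T \in R_{\overline{\omega}}(\cG)'$ with $L^\omega_f$ for $f = T1_{\cG\zero}$ by commuting $T$ past the $\rho^{\overline{\omega}}_{B}$, and showing $L^\omega_f \in L_\omega(\cG)$ by sharp-norm convergence of the finite Fourier partial sums $\sum_{B \in \cF} (f \circ t|_B\inv)\lambda_B^\omega$ over a symmetric basis --- are exactly the paper's. The only difference is organizational: the paper runs the single cycle $R_{\overline{\omega}}(\cG)' \subseteq \LC_\omega(\cG) \subseteq L_\omega(\cG) \subseteq R_{\overline{\omega}}(\cG)'$, so your separate direct argument for $L_\omega(\cG) \subseteq \LC_\omega(\cG)$ is redundant (it follows from composing the other two inclusions).
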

\begin{proof}
    We show the sequence of inclusions $R_{\overline{\omega}}(\cG)' \subseteq LC_\omega(\cG) \subseteq L_\omega(\cG) \subseteq R_{\overline{\omega}}(\cG)'$ which shows the first string of identities. The second part is treated analogously.

    For the first inclusion, let $T \in R_{\overline{\omega}}(\cG)'$ and define $f:= T(1_{\cG\zero})$. We claim that $T = L_f^\omega$. It suffices to show equality on $1_B$ for $B$ an arbitrary bisection. First, note that
    \begin{equation*}
        T(1_B) = T\rho_{B\inv}^{\overline{\omega}}(1_{\cG\zero}) = \rho_{B\inv}^{\overline{\omega}}T(1_{\cG\zero}) = \rho_{B\inv}^{\overline{\omega}}(f).
    \end{equation*}
    Next, observe that for any functions $h \in L^2(\cG, \mu_s)$ one has that $\rho_{B\inv}^{\overline{\omega}}(h) = h\ast_\omega 1_B$. Thus we find that
    \begin{equation}
        \rho_{B\inv}^{\overline{\omega}}(f) = f\ast_\omega 1_B = L_f^\omega(1_B),
    \end{equation}
    which shows the claim and thus the first inclusion.

    For the second inclusion, let $f \in L^2(\cG)$ be a left convolver and fix a basis $\cB$ of $\cG$. Let $\scF$ be a directed set of finite subsets of $\cB$. For each $\cF \in \scF$, define $f_\cF := \sum_{B \in \cF} f_B \ast_\omega 1_B$ where $f_B := f \circ t|_B\inv$. By construction, the operator $\lambda_{f_\cF}^\omega$ defined by
    \begin{equation*}
        \lambda_{f_\cF}^\omega \xi (g) = \sum_{x,y; xy = g} \omega(x,x\inv y) f(x) \xi(x\inv y)
    \end{equation*}
    belongs to $L_\omega(\cG)$. Note that 
    \begin{align*}
        \|\lambda_{f_\cF}^\omega\|^\sharp = \sqrt{\|f_\cF\|_2^2 + \|f_\cF^*\|_2^2} \leq \sqrt{\|f\|_2^2 + \|f^*\|_2^2}\;. 
    \end{align*}
    As well, for any $\cE,\cF \in \scF$, 
    \begin{align*}
        \|\lambda_{f_\cE}^\omega - \lambda_{f_\cF}^\omega\|^\sharp = \sqrt{\|f_\cE - f_\cF\|_2^2 + \|f^*_\cE - f^*_\cF\|_2^2} \;. 
    \end{align*}
    In particular as $f_\cF \to f$ and $f_{cF}^* \to f^*$ in $L^2$-norm, the net $(\lambda_{f_\cF}^\omega)_{\mathcal{F} \in \scF}$ is Cauchy. Since these elements belong in a fixed closed ball, $\lambda_{f_\cF}^\omega$ converges in the $\sigma$-strong* operator topology to some $T \in L_\omega(\cG)$. We claim that $T = L_f^\omega$.  For this, it suffices to show that these operators agree on bisections. For a bisection $B$,  since the $\sigma$-strong* operator topology is stronger than the strong operator topology, we have the limit $T(1_B) = \lim_\cF \lambda_{f_\cF}^\omega(1_B)$, and 
    \begin{align*}
        \|L_f^\omega(1_B) - \lambda_{f_\cF}^\omega(1_B)\|_2 = \|(f - f_\cF) \ast_\omega 1_B\|_2 \leq \|\rho_{B\inv}^\omega\|\|f - f_\cF\|_2 \;. 
    \end{align*}
    Thus, $\lim_{\cF} \lambda_{f_\cF}^\omega(1_B) = L_f^\omega(1_B)$, and we have the desired identity.

    For the last inclusion, it suffices to note that for any bisections $A, B$ one has that $\lambda_A^\omega \rho_B^{\overline{\omega}}(f) = 1_A \ast_\omega f \ast_\omega 1_{B\inv} = \rho_B^{\overline{\omega}} \lambda_A^\omega (f)$, so that $L_\omega(\cG) \subset R_{\overline{\omega}}(\cG)'$.
\end{proof}

\subsection{Fourier decomposition and the \texorpdfstring{$j$}{j} map again}

The results in Section \ref{Section:Basis, Fourier, j map} still remain valid in the twisted case, albeit slightly modified to include the twist when necessary. Let us make this precise. First, we note that the map
\begin{equation*}
    j: L_\omega(\cG) \to L^2(\cG, \mu_s): j(a) = a1_{\cG\zero}
\end{equation*}
still defines an injective linear map sending $\lambda_A^\omega$ to $1_A$ whenever $A \subset \cG$ is a Borel bisection. As before, one can check that $j(\lambda_A^\omega x \lambda_B^\omega) = \lambda_A^\omega \rho_{B\inv}^{\overline{\omega}} j(x)$. We again consider the faithful normal state $\phi_\mu$ on $ L_{\omega}(\cG)$ uniquely determined by
\begin{equation*}
    \phi_\mu(\lambda_A) = \int E(1_A) \mathrm{d}\mu = \mu(A\cap \cG\zero) \text{ for every bisection } A\subset \cG\,, 
\end{equation*}
and we set
\begin{equation*}
    \| a \|^\sharp = \sqrt{\phi_\mu(a^*a) + \phi_\mu(aa^*)}
\end{equation*}
for $a \in L_\omega(\cG)$. Lemma \ref{Lemma:a^B and a_B} remains valid, as does the Fourier decomposition of Proposition \ref{Proposition:Fourier decomposition}. We give the precise statement in the proposition below but omit the proof since it carries over from Proposition \ref{Proposition:Fourier decomposition} verbatim.

\begin{proposition}\label{Proposition:Fourier decomposition twisted}
    Let $(\cG, \mu, \omega)$ be a twisted discrete measured groupoid and fix a symmetric basis $\cB$ of $\cG$. Denote by $E: L_\omega(\cG) \to L^\infty(\cG\zero, \mu)$ the conditional expectation. Any element of $a \in L_\omega(\cG)$ admits a Fourier decomposition
    \begin{equation*}
        a = \sum_{B \in \cB} E(a (\lambda_B^\omega)^*) \lambda_B^\omega
    \end{equation*}
    which converges in the sharp norm. Furthermore, the Fourier coefficients satisfy the identity
    \begin{equation*}
        E(a (\lambda_B^\omega)^*) = a^B
    \end{equation*}
    for all $B \in \cB$. In particular, it holds that $\sum_B \|E(a (\lambda_B^\omega)^*) \|_2^2 < \infty$.
\end{proposition}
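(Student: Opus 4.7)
My plan is to mirror the proof of Proposition~\ref{Proposition:Fourier decomposition} with the cocycle twist inserted where appropriate. I would start from the Hilbert-space decomposition of Proposition~\ref{Proposition:basis for L2} applied to $j(a)\in L^2(\cG,\mu_s)$, giving $j(a)=\sum_{B\in\cB}a^B\ast 1_B$ with $a^B\in L^\infty(\cG\zero,\mu)$ by Lemma~\ref{Lemma:a^B and a_B} (which remains valid as the text remarks). This expansion is purely an $L^2$-statement and does not see the cocycle.

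Next, for a finite $\cF\subset\cB$ I would set $a_{\cF}:=\sum_{B\in\cF}a^B\lambda_B^\omega$. Using the normalization $\omega(t(x),x)=1$ one checks that $\lambda_B^\omega 1_{\cG\zero}=1_B$, and multiplying on the left by $a^B\in L^\infty(\cG\zero,\mu)$ produces $j(a^B\lambda_B^\omega)=a^B\ast 1_B$, so $j(a_\cF)=\sum_{B\in\cF}a^B\ast 1_B$. The $L^2$-convergence $j(a_\cF)\to j(a)$ from Proposition~\ref{Proposition:basis for L2} then translates into $\phi_\mu((a-a_\cF)^*(a-a_\cF))=\|j(a)-j(a_\cF)\|_2^2\to 0$.

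For the other half of the sharp norm, I would compute $a_\cF^*$ explicitly using $(\lambda_B^\omega)^*=\lambda_{B\inv}^\omega$ together with the commutation relation $\lambda_A^\omega F=(F\circ\sigma_A\inv)\,\lambda_A^\omega$ for $F\in L^\infty(\cG\zero,\mu)$ and a bisection $A$, which gives $a_\cF^*=\sum_{B\in\cF}(\overline{a^B}\circ\sigma_{B\inv}\inv)\,\lambda_{B\inv}^\omega$. Reindexing via $C=B\inv$ and exploiting the symmetry of $\cB$, one recognises this as the corresponding partial sum for $a^*$, provided the identity $(a^*)^C=\overline{a^{C\inv}}\circ\sigma_C\inv$ holds; this identity is checked by unravelling $j(a^*)=a^*1_{\cG\zero}$ and is precisely where the normalization $\omega(x,x\inv)=1$ enforced at the start of the section is used to make the twists cancel. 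Once this is verified, $L^2$-convergence $j(a_\cF^*)\to j(a^*)$ gives $\phi_\mu((a-a_\cF)(a-a_\cF)^*)\to 0$, completing the sharp-norm convergence.

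Finally, to identify the Fourier coefficients I would fix $B\in\cB$ and note that for any finite $\cF\ni B$ a direct computation yields $E(a_\cF(\lambda_B^\omega)^*)=a_\cF^B=a^B$, the cocycle factors appearing being of the form $\omega(x,x\inv)$ or $\omega(t(x),x)$, which are trivial by normalization. Since $x\mapsto E(x(\lambda_B^\omega)^*)$ is normal and hence $\sharp$-continuous on bounded subsets, and $(a_\cF)$ is bounded in sharp norm by $\|a\|^\sharp$, passing to the limit yields $E(a(\lambda_B^\omega)^*)=a^B$. The estimate $\sum_B\|E(a(\lambda_B^\omega)^*)\|_2^2<\infty$ is then just the orthogonality in Proposition~\ref{Proposition:basis for L2}. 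The only real obstacle is the cocycle bookkeeping in the adjoint step; the new input beyond the untwisted proof is a repeated use of $\omega(x,x\inv)=1$ and $\omega(t(x),x)=1=\omega(x,s(x))$ to discard the twists that arise.
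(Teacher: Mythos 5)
Your proof is correct and takes essentially the same approach as the paper: the paper omits the proof entirely, stating that it carries over verbatim from the untwisted Proposition~\ref{Proposition:Fourier decomposition}, and your argument is exactly that transfer with the cocycle bookkeeping made explicit (all twists reducing to $\omega(x,x^{-1})=1$ and $\omega(x,s(x))=1$ under the standing normalization). Nothing further is needed.
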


Lemmas \ref{Lemma:Fourier decomp when support on Iso}, \ref{Lemma:support j(a)}, and \ref{Lemma:j(a) conj invariant} translate to the following results in the twisted case.

\begin{lemma}
    Let $(\cG, \mu, \omega)$ be a twisted discrete measured groupoid and let $\cB$ be a basis for $\Iso(\cG)$. For every $a \in L_\omega(\cG)$ such that $j(a)$ is supported on $\Iso(\cG)$, we have the Fourier decomposition
    \begin{equation*}
        a = \sum_{B \in \cB} E(a(\lambda_B^\omega)^*) \lambda_B^\omega.
    \end{equation*}
\end{lemma}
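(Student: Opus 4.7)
The strategy is a direct twisted adaptation of the proof of Lemma \ref{Lemma:Fourier decomp when support on Iso}: extend the basis $\cB$ of $\Iso(\cG)$ to a basis of the full groupoid $\cG$, apply the twisted Fourier decomposition of Proposition \ref{Proposition:Fourier decomposition twisted}, and then verify that the extra terms vanish by the support assumption on $j(a)$.

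First I would invoke Lemma \ref{Lemma:Basis of Iso} to obtain a countable family $\cD$ of mutually disjoint bisections of $\cG$ such that $\cB \cup \cD$ is a basis of $\cG$. Because $\cB$ already covers $\Iso(\cG)$, every $D \in \cD$ must satisfy $D \cap \Iso(\cG) = \emptyset$. After replacing $\cB$ and $\cD$ by their closures under inversion if necessary (noting that $B\inv \subset \Iso(\cG)$ whenever $B \subset \Iso(\cG)$, and similarly that $D\inv \cap \Iso(\cG) = \emptyset$ whenever $D \cap \Iso(\cG) = \emptyset$), we may assume that $\cB \cup \cD$ is a symmetric basis of $\cG$ with $\cB$ itself a symmetric basis of $\Iso(\cG)$.

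Now apply Proposition \ref{Proposition:Fourier decomposition twisted} to the symmetric basis $\cB \cup \cD$ to obtain the sharp-norm convergent Fourier decomposition
\begin{equation*}
    a = \sum_{B \in \cB} E(a(\lambda_B^\omega)^*) \lambda_B^\omega + \sum_{D \in \cD} E(a(\lambda_D^\omega)^*) \lambda_D^\omega,
\end{equation*}
with Fourier coefficients satisfying $E(a(\lambda_B^\omega)^*) = a^B$ and $E(a(\lambda_D^\omega)^*) = a^D$. Since $a^D = (a\cdot 1_{\cG\zero}) \circ t|_D\inv = j(a) \circ t|_D\inv$ and $D$ is contained in $\cG \setminus \Iso(\cG)$, the assumption that $j(a)$ is supported on $\Iso(\cG)$ forces $j(a)|_D = 0$ and hence $a^D = 0$ for every $D \in \cD$. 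Therefore the second sum drops out entirely, leaving precisely the desired decomposition indexed over $\cB$.

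There is essentially no serious obstacle: the twisted Fourier decomposition in Proposition \ref{Proposition:Fourier decomposition twisted} does all the real work, and the only point to be careful about is the symmetrization of $\cB \cup \cD$, which is bookkeeping rather than a substantive difficulty. The argument is structurally identical to its untwisted counterpart since the vanishing argument for the $\cD$-coefficients depends only on the support of $j(a)$ and not on the cocycle $\omega$.
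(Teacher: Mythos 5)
Your argument is exactly the one the paper intends: the paper's proof consists of the single line ``This is an application of Proposition \ref{Proposition:Fourier decomposition twisted} together with Lemma \ref{Lemma:Basis of Iso},'' and your write-up simply fills in the details of that same route (extend $\cB$ to a basis of $\cG$, apply the twisted Fourier decomposition, and kill the coefficients $a^D$ for $D$ disjoint from $\Iso(\cG)$ using the support hypothesis). The proposal is correct and matches the paper's approach.
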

\begin{proof}
    This is an application of Proposition \ref{Proposition:Fourier decomposition twisted} together with Lemma \ref{Lemma:Basis of Iso}.
\end{proof}

Lastly, the analogue of Lemma \ref{Lemma:j(a) conj invariant} in the twisted setting becomes the following lemma. The proof is very similar to the proof of Lemma \ref{Lemma:j(a) conj invariant}, but for the convenience of the reader we give all details.

\begin{lemma}\label{lemma:j(a) con inv twisted}
    Let $a \in L_\omega(\cG)$. The following are equivalent: 
    \begin{enumerate}
        \item $a$ belongs to the center of $L_\omega(\cG)$. 
        \item $j(a)$ is supported on $\Iso(\cG)$ and the identity 
    \begin{equation}\label{eq:j(a) identity twisted}
        \omega(ghg\inv, g) j(a)(ghg\inv) = \omega(g, h) j(a)(h)
    \end{equation}
    holds for almost every $h \in \Iso(\cG)$ and every $g \in \cG$ with $s(g) = s(h)$.
    \end{enumerate} 
\end{lemma}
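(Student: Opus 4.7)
Both implications will be established by testing commutation of $a$ with every $\lambda_B^\omega$ (for Borel bisections $B$) at the level of the $j$-map, using the identity $j(\lambda_A^\omega x \lambda_B^\omega) = \lambda_A^\omega \rho_{B\inv}^{\overline{\omega}} j(x)$ and the standing normalizations $\omega(g, s(g)) = \omega(t(g), g) = \omega(g, g\inv) = 1$. The derived relation $\omega(g\inv, g) = 1$ also follows by applying the cocycle identity at $(g, g\inv, g)$ and using these normalizations.

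For $(1) \Rightarrow (2)$, the proof of Lemma \ref{Lemma:support j(a)} transfers essentially verbatim to show that $j(a)$ is supported on $\Iso(\cG)$, because for $F \subset \cG\zero$ the operator $\lambda_F^\omega$ is simply multiplication by $1_F \circ t$ and no cocycle terms appear in the comparison of the Fourier coefficients of $\lambda_F^\omega a$ and $a \lambda_F^\omega$. For the twisted conjugation identity \eqref{eq:j(a) identity twisted}, I would fix a bisection $B$ containing $g$ and compute $j(\lambda_B^\omega a \lambda_{B\inv}^\omega)$ in two ways. Using centrality and the fact that $\omega(B, B\inv) = 1$, it reduces to $j(\lambda_{t(B)}^\omega a)$, whose value at $k = ghg\inv$ is simply $j(a)(ghg\inv)$. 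Computing the same object directly as $\lambda_B^\omega \rho_B^{\overline{\omega}} j(a)$ and unpacking the definitions yields $\omega(g, hg\inv)\, \omega(h, g\inv)\, j(a)(h)$ at the same point. The cocycle identity at $(g, h, g\inv)$ rewrites the product of weights as $\omega(gh, g\inv)\, \omega(g, h)$, and a further application at $(gh, g\inv, g)$, together with $\omega(gh, s(g)) = 1 = \omega(g\inv, g)$, yields $\omega(gh, g\inv) = \overline{\omega(ghg\inv, g)}$. Combining these simplifications produces precisely \eqref{eq:j(a) identity twisted}.

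For $(2) \Rightarrow (1)$, I would show that $\lambda_B^\omega a = a \lambda_B^\omega$ for every Borel bisection $B$, which suffices for centrality since such operators generate $L_\omega(\cG)$. By injectivity of $j$ this reduces to the pointwise identity $\lambda_B^\omega j(a) = \rho_{B\inv}^{\overline{\omega}} j(a)$ on $\cG$. Evaluating at $k \in \cG$, the support hypothesis on $j(a)$ forces both sides to vanish unless there exists a (necessarily unique) $g \in B$ with $s(g) = s(k)$ and $t(g) = t(k)$, in which case both $h := g\inv k$ and $kg\inv = ghg\inv$ lie in $\Iso(\cG)$ and the two sides evaluate to $\omega(g, h)\, j(a)(h)$ and $\omega(ghg\inv, g)\, j(a)(ghg\inv)$ respectively. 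Since $s(g) = s(h)$, these agree precisely by condition (2), so $a \lambda_B^\omega = \lambda_B^\omega a$ for every $B$ and hence $a \in \cZ(L_\omega(\cG))$.

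The main obstacle is the cocycle bookkeeping, in particular the identity $\omega(gh, g\inv) = \overline{\omega(ghg\inv, g)}$, which is the algebraic engine reconciling the two computations of $j(\lambda_B^\omega a \lambda_{B\inv}^\omega)$. This relies essentially on the full normalization of $\omega$ together with two carefully chosen applications of the cocycle equation; keeping track of the precise weights that arise when passing between $\lambda^\omega$, $\rho^{\overline{\omega}}$, and the pointwise action on $j(a)$ is where almost all of the technical effort concentrates, both for the forward and the backward direction.
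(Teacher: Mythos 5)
Your proposal is correct and follows essentially the same route as the paper: both directions are handled by comparing $j(\lambda_B^\omega a \lambda_{B\inv}^\omega)$ with $j(\lambda_{t(B)}^\omega a)$ (equivalently $\lambda_B^\omega j(a)$ with $\rho_{B\inv}^{\overline{\omega}} j(a)$) over bisections $B$, with the same two applications of the cocycle identity producing $\omega(gh,g\inv)=\overline{\omega(ghg\inv,g)}$. The only cosmetic differences are that you evaluate pointwise where the paper phrases the forward direction through the Fourier coefficients $a^{BCB\inv}$ over a basis of $\Iso(\cG)$, and in the converse you verify $\lambda_B^\omega a = a\lambda_B^\omega$ directly rather than first establishing $\lambda_B^\omega a \lambda_{B\inv}^\omega = \lambda_{t(B)}^\omega a$ and then commuting $a$ past $\lambda_{s(B)}^\omega$.
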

\begin{proof}
    First assume that $a \in \cZ(L_\omega(\cG))$. That $j(a)$ is supported on $\Iso(\cG)$ follows exactly as in the untwisted case. Take $g \in \cG$ and $h \in \Iso(\cG)$ with $s(g) = s(h)$ and let $B \subset \cG$ and $C \subset \Iso(\cG)$ be bisections containing $g$ and $h$ respectively. If $h \in \cG\zero$, take $C = \cG\zero$ and if $h \notin \cG\zero$, assume that $C \cap \cG\zero = \emptyset$. Fix a basis $\cB$ of $\Iso(\cG)$ that contains $C$. On the one hand, we have
    \begin{equation*}
        j\left(\lambda_B^\omega a \lambda_{B\inv}^\omega \right) = \rho_B^{\overline{\omega}} \lambda_B^\omega j(a) = \sum_{D \in \cB} \omega(B, D)\omega(BD, B\inv) (a^D \circ \sigma_B\inv) \ast 1_{BDB\inv}.
    \end{equation*}
    On the other hand, using the fact that $B\cB B\inv$ is a basis for $\Iso(\cG)_{t(B)}$, we have the decomposition
    \begin{equation*}
        j\left(\lambda_{t(B)}^\omega a\right) = \sum_{D\in \cB} a^{BDB\inv} \ast 1_{BDB\inv}.
    \end{equation*}
    Looking at the base coordinate $C$, we get the identity
    \begin{equation}\label{eq:bisection omega regularity}
        a^{BCB\inv} = \omega(B,C)\omega(BC, B\inv) a^C \circ \sigma_B\inv.
    \end{equation}
    Note that
    \begin{equation*}
        a^C \circ \sigma_B\inv (t(ghg\inv)) = a^C(t(h)),
    \end{equation*}
    as well as
    \begin{equation*}
        \omega(B,C)(t(ghg\inv)) = \omega(B,C)(t(g)) = \omega(g, h)
    \end{equation*}
    and
    \begin{equation*}
        \omega(BC, B\inv)(t(ghg\inv)) = \omega(gh, g\inv).
    \end{equation*}
    Hence we get that
    \begin{equation}
        a^{BCB\inv}(t(ghg\inv)) = \omega(g,h)\omega(gh, g\inv) a^C (t(h)).
    \end{equation}
    Using the cocycle identity it follows that $\omega(gh, g\inv) = \overline{\omega(ghg\inv, g)}$ so that
    \begin{equation*}
        \omega(ghg\inv, g) a^{BCB\inv}(t(ghg\inv)) = \omega(g,h) a^C(t(h)).
    \end{equation*}
    Finally, note that whenever $k$ lies in some bisection $E$, one has that $j(a)(k) = a^E \ast 1_E(k) = a^E(t(k))$, from which \eqref{eq:j(a) identity twisted} follows.

    Conversely, let $B \subseteq \cG$ be any bisection. We first show that $j(\lambda_B^\omega a \lambda_{B^{-1}}^\omega) = j(\lambda_{t(B)}^\omega a)$. For any $h \in \Iso(\cG)$, letting $g \in B$ be such that $s(g) = t(h)$ if $g$ exists, 
    \begin{align*}
        j(\lambda_B^\omega  a \lambda_{B^{-1}}^\omega)(h) &= 1_{t(B)}(h)\omega(g^{-1},gh)\omega(g^{-1},h)j(a)(ghg^{-1})\\
        &= \omega(g^{-1},g)\omega(g^{-1}g,h)j(a)(h) e_{t(B)}(h) \\
        &= j(\lambda_{t(B)}^\omega a)(h)\;. 
    \end{align*}
    Since $j$ is injective, we have the identity $\lambda_B^\omega  a  \lambda_{B^{-1}}^\omega = \lambda_{t(B)}^\omega  a$. 
    Furthermore, since $j(a)$ is supported on $\Iso(\cG)$, it follows that for all $E \subseteq \cG^{(0)}$, we have the identity $\lambda_E^\omega a = a \lambda_E^\omega$. Therefore,
    \begin{align*}
        \lambda_B^\omega a &= \lambda_B^\omega \lambda_{s(B)}^\omega a = \lambda_B^\omega a \lambda_{s(B)}^\omega \\
        &= \lambda_{t(B)}^\omega a \lambda_B^\omega = a \lambda_{t(B)}^\omega \lambda_B^\omega \\
        &= a \lambda_B^\omega \;. 
    \end{align*}
    Since the elements of the form $\lambda_B^\omega$ are SOT-dense in $L_\omega(\cG)$, we get the converse. 
\end{proof}

\subsection{Kleppner's condition and Factoriality}
\label{Section: Kleppner's condition and Factoriality}
Motivated by Lemma \ref{lemma:j(a) con inv twisted}, we say a conjugation invariant Borel set $E \subset \Iso(\cG)$ is \emph{central} if there exists a function $f: E \to \C$ such that 
\begin{equation}\label{eq:f central set}
    f(ghg\inv) =  \overline{\omega(ghg\inv, g)}\omega(g, h) f(h)
\end{equation}
for almost all $h \in E$ and all $g \in s\inv(s(h))$.

\begin{theorem}\label{theorem:Factoriality twisted}
    Let $(\cG, \mu, \omega)$ be a twisted measured discrete groupoid. The following two conditions are equivalent:
    \begin{enumerate}
        \item Any central Borel set $E\subset \Iso(\cG)$ with $\mu_s(E) < \infty$ is contained in $\cG\zero$.
        \item The center of $L_\omega(\cG)$ equals $\cZ \left(L_\omega(\cG)\right) = L^\infty(\cG\zero, \mu)^\cG$.
    \end{enumerate}    
\end{theorem}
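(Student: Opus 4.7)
The strategy mirrors the proof of Theorem \ref{theorem:Factoriality untwisted}, with Lemma \ref{lemma:j(a) con inv twisted} replacing Lemmas \ref{Lemma:support j(a)} and \ref{Lemma:j(a) conj invariant}. First observe that the inclusion $L^\infty(\cG\zero, \mu)^\cG \subseteq \cZ(L_\omega(\cG))$ always holds: for $F \in L^\infty(\cG\zero, \mu)$, the central identity \eqref{eq:j(a) identity twisted} evaluated at a unit $h \in \cG\zero$ reduces, using the normalizations $\omega(g, s(g)) = \omega(t(g), g) = 1$, to $F(t(g)) = F(s(g))$, which is exactly $\cG$-invariance. Hence condition (2) is equivalent to the reverse inclusion $\cZ(L_\omega(\cG)) \subseteq L^\infty(\cG\zero, \mu)^\cG$.

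For $(1) \Rightarrow (2)$, take $a \in \cZ(L_\omega(\cG))$ and let $F \in L^\infty(\cG\zero, \mu)$ denote the restriction of $j(a)$ to $\cG\zero$; by the preceding paragraph $F$ is $\cG$-invariant. Then $b := a - F \in \cZ(L_\omega(\cG))$ satisfies $j(b)|_{\cG\zero} = 0$ and, by Lemma \ref{lemma:j(a) con inv twisted}, $j(b)$ is supported on $\Iso(\cG)$ and satisfies \eqref{eq:j(a) identity twisted}; since $|\omega| = 1$, $|j(b)|$ is conjugation-invariant. For each $n \geq 1$, the sublevel set
\begin{equation*}
S_n := \bigl\{ g \in \Iso(\cG) \setminus \cG\zero : |j(b)(g)| \geq 1/n \bigr\}
\end{equation*}
is a conjugation-invariant Borel set with $\mu_s(S_n) \leq n^2 \|j(b)\|_2^2 < \infty$, and the non-vanishing function $j(b)|_{S_n}$ witnesses that $S_n$ is central in the sense of \eqref{eq:f central set}. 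Condition (1) forces each $S_n \subseteq \cG\zero$ and hence $\mu_s(S_n) = 0$, so $j(b) = 0$ and $a = F \in L^\infty(\cG\zero, \mu)^\cG$.

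For $(2) \Rightarrow (1)$, suppose $E \subseteq \Iso(\cG)$ is a central Borel set with $\mu_s(E) < \infty$ and $E \not\subseteq \cG\zero$, witnessed by a (non-trivial) function $f$. Since $|f|$ is conjugation-invariant by \eqref{eq:f central set} and $|\omega| = 1$, we may truncate to $E' := E \cap \{0 < |f| \leq M\}$ for $M$ large enough so that $E'$ remains conjugation-invariant and central (with $f|_{E'}$), and has $\mu_s(E' \setminus \cG\zero) > 0$. Fix a basis $\cB$ of $\Iso(\cG)$ and set, for each finite $\cF \subseteq \cB$,
\begin{equation*}
a_\cF := \sum_{B \in \cF} F_B\, \lambda_{E' \cap B}^\omega, \qquad F_B := (f \circ t|_{E' \cap B}\inv) \cdot 1_{t(E' \cap B)}\,.
\end{equation*}
Using the normalization $\omega(g, s(g)) = 1$, one checks $j(a_\cF) = f \cdot 1_{\bigsqcup_{B \in \cF}(E' \cap B)}$, whose $L^2$-norm squared is a partial sum of a non-negative series bounded by $M^2 \mu_s(E')$. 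The supports of the various $j(a_B^*)$ lie in the mutually disjoint sets $(E' \cap B)\inv$, and the key identity $\mu_s = \mu_t$ on $\Iso(\cG)$ yields $\|j(a_B^*)\|_2 = \|j(a_B)\|_2$. Hence $(a_\cF)_\cF$ is Cauchy in sharp norm, converging to some $a \in L_\omega(\cG)$ with $j(a) = f \cdot 1_{E'}$. Lemma \ref{lemma:j(a) con inv twisted} places $a$ in $\cZ(L_\omega(\cG))$, and since $j(a)$ is nonzero on the positive-measure set $E' \setminus \cG\zero$, we have $a \notin L^\infty(\cG\zero, \mu)$, contradicting (2).

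The main obstacle in $(2) \Rightarrow (1)$ is turning the abstract data $(E, f)$ of a central set into a concrete operator in $L_\omega(\cG)$. The sharp-norm convergence of the partial Fourier sums rests on three inputs: the truncation $|f| \leq M$ makes each Fourier coefficient $F_B$ an $L^\infty$-function; the finiteness $\mu_s(E') < \infty$ bounds $\|j(a_\cF)\|_2$; and, most crucially, the coincidence $\mu_s = \mu_t$ on $\Iso(\cG)$ allows one to bound $\|j(a_\cF^*)\|_2$ by the same quantity, which is what closes the sharp-norm estimate.
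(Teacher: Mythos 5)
Your overall strategy is the same as the paper's: $(1)\Rightarrow(2)$ is handled, as in the paper, by Lemma \ref{lemma:j(a) con inv twisted} together with a Chebyshev cut-off of $j(a-E(a))$, and that direction of your argument is correct (your observation that $F\in L^\infty(\cG\zero,\mu)$ is central if and only if it is $\cG$-invariant is a nice way to package the easy inclusion). For $(2)\Rightarrow(1)$ you, like the paper, try to turn the data $(E,f)$ into a central operator with $j$-image $f\cdot 1_{E'}$, but your convergence step has a genuine gap. A net that is Cauchy in $\|\cdot\|^\sharp$ converges to an element of $L_\omega(\cG)$ only if it lies in a \emph{fixed operator-norm ball} (this is exactly the extra hypothesis invoked in the proof of Proposition \ref{Proposition:Commutant of left regular representation}: the sharp norm controls the $\sigma$-strong$^*$ topology on bounded sets only). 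Your partial sums $a_\cF=\sum_{B\in\cF}F_B\,\lambda^\omega_{E'\cap B}$ are not uniformly bounded in operator norm under the hypotheses $|f|\le M$ and $\mu_s(E')<\infty$, and the $L^2$-limit $f\cdot 1_{E'}$ need not be $j(a)$ for any bounded $a$. The remark following Lemma \ref{lem:ergodic union of bisections}, read with trivial cocycle, is a counterexample: for the bundle $\cG=\bigsqcup_n\cS_n$ and $E'=\Omega_A$ the class of transpositions with $f\equiv 1$, one has $\mu_s(E')<\infty$, yet the would-be operator restricts on the fibre over $n$ to the class sum of transpositions in $L(\cS_n)$, whose norm is $\binom{n}{2}\to\infty$. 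So ``Cauchy in sharp norm, converging to some $a\in L_\omega(\cG)$'' fails as stated.

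The gap is fixable with one more truncation: replace $E'$ by $E'_N:=\{g\in E'\mid |E'\cap s^{-1}(s(g))|\le N\}$. Conjugation by $g$ is a bijection from $E'\cap s^{-1}(s(g))$ onto $E'\cap s^{-1}(t(g))$, so the fibre count is constant along conjugation and $E'_N$ is again conjugation-invariant and central with witness $f|_{E'_N}$; since $\mu_s(E')<\infty$ the fibres are a.e.\ finite, so $\mu_s(E'_N\setminus\cG\zero)>0$ for $N$ large. A Cauchy--Schwarz estimate then gives $\|L^\omega_{f\cdot 1_{E'_N}}\|\le NM$, your partial sums stay in the ball of radius $NM$, and the sharp-norm argument closes. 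To be fair, the paper compresses this entire direction into ``a calculation shows that $f\cdot 1_E$ defines an element of $\cZ(L_\omega(\cG))$'', and the same boundedness issue is hiding there; your write-up makes the subtlety visible but does not resolve it, so as written the step fails.
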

\begin{proof}
    Suppose first that $E \subset \Iso(\cG)$ is central Borel set with $\mu_s(E) < \infty$ which is not contained in $\cG\zero$. Let $f: E \to \C$ be a function satisfying \eqref{eq:f central set}. Then a calculation shows that $f\cdot 1_E$ defines an element in $\cZ(L_\omega(\cG))$, which is moreover not contained in $L^\infty(\cG\zero,\mu)$.

    Conversely, suppose $a$ is an element in $\cZ(L_\omega(\cG))$ which is not contained in $L^\infty(\cG\zero,\mu)$. By subtracting its conditional expectation, we can assume that $\supp(j(a)) \cap \cG\zero = \emptyset$. As in the proof of Theorem \ref{theorem:Factoriality untwisted} we find a conjugation invariant set $E \subset \supp(j(a))$ with $\mu_s(E) < \infty$. By Lemma \ref{lemma:j(a) con inv twisted}, $E \subset \Iso(\cG)$ is a central set with the function $j(a)|_E$, which is not contained in $\cG\zero$.
\end{proof}

\begin{corollary}\label{corollary:factoriality twisted}
     Let $(\cG, \mu, \omega)$ be a twisted measured discrete groupoid. The following two conditions are equivalent.
    \begin{enumerate}
        \item The groupoid $\cG$ is ergodic and any central Borel set $E \subset \Iso(\cG)$ with $\mu_s(E) < \infty$ is contained in $\cG\zero$.
        \item The twisted groupoid von Neumann algebra $L_\omega(\cG)$ is a factor.
    \end{enumerate}
\end{corollary}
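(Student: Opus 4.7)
The plan is to deduce this corollary from Theorem \ref{theorem:Factoriality twisted} in exactly the same way that Corollary \ref{Corollary:Factoriality untwisted} was deduced from Theorem \ref{theorem:Factoriality untwisted}.

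First I would record the identification that does the work: $L^\infty(\cG\zero, \mu)^\cG = \C \cdot 1_{\cG\zero}$ if and only if the groupoid $\cG$ is ergodic. This is a statement purely about the action of the equivalence relation $\cR_\cG$ on the unit space and is therefore independent of the cocycle $\omega$. I would also note the routine inclusion $L^\infty(\cG\zero, \mu)^\cG \subseteq \cZ(L_\omega(\cG))$: for any bisection $B$ and Borel $E \subset \cG\zero$, the two scalar factors $\omega(B, E)$ and $\omega(BE, B\inv)$ appearing in $\lambda_B^\omega 1_E (\lambda_B^\omega)^\ast$ both reduce to $1$ on $\sigma_B(E) \subset \cG\zero$ under our normalization conventions on $\omega$, so conjugation by $\lambda_B^\omega$ acts on $L^\infty(\cG\zero, \mu)$ exactly as in the untwisted case, and $\cG$-invariant unit-space functions automatically remain central after introducing the twist.

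For the forward direction of the corollary, suppose condition 1 holds. Then the central set hypothesis together with Theorem \ref{theorem:Factoriality twisted} gives $\cZ(L_\omega(\cG)) = L^\infty(\cG\zero, \mu)^\cG$, and ergodicity collapses the right-hand side to $\C$, so $L_\omega(\cG)$ is a factor. Conversely, if $L_\omega(\cG)$ is a factor, then from the chain $\C \subseteq L^\infty(\cG\zero, \mu)^\cG \subseteq \cZ(L_\omega(\cG)) = \C$ both containments are equalities; the outer equality gives ergodicity of $\cG$, while the inner equality $\cZ(L_\omega(\cG)) = L^\infty(\cG\zero, \mu)^\cG$ combined with Theorem \ref{theorem:Factoriality twisted} yields the central set condition.

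I do not expect any genuine obstacle: the corollary is essentially a bookkeeping consequence of the preceding theorem combined with the standard characterization of ergodicity. The only step needing a brief verification is the cancellation of cocycle factors ensuring $L^\infty(\cG\zero, \mu)^\cG \subseteq \cZ(L_\omega(\cG))$, and this reduces to the normalization $\omega(g, s(g)) = 1 = \omega(g, g\inv)$ already assumed throughout Section \ref{Section:Factoriality twisted}.
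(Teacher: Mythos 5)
Your proposal is correct and matches the paper's (implicit) argument: the paper gives no separate proof of this corollary, treating it exactly as the twisted analogue of Corollary~\ref{Corollary:Factoriality untwisted}, i.e.\ combining Theorem~\ref{theorem:Factoriality twisted} with the equivalence $L^\infty(\cG\zero,\mu)^\cG = \C\cdot 1_{\cG\zero}$ iff $\cG$ is ergodic, together with the inclusion $L^\infty(\cG\zero,\mu)^\cG \subseteq \cZ(L_\omega(\cG))$. Your extra verification that the cocycle factors cancel on the unit space (so that the twisted conjugation acts on $L^\infty(\cG\zero,\mu)$ as in the untwisted case) is the right point to check and is consistent with the normalization conventions the paper imposes on $\omega$.
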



Inspired by \cite{Kle62} we introduce the following definition.

\begin{definition}
    Let $(\cG, \omega)$ be a twisted Borel groupoid. We say that a Borel bisection $A$ of $\Iso(G)$ is \emph{$\omega$-regular} if $\omega(y,g)= \omega(g,x)$ for all $g \in \cG$ and $x, y \in A$ satisfying $gxg\inv = y$. We say that a twisted measured discrete groupoid $(\cG, \mu, \omega)$ satisfies \emph{Kleppner's condition} if whenever $A \subset \cG$ is a $\omega$-regular bisection with finite measure conjugacy class, then $A \subset \cG\zero$.
\end{definition}

We now show that Kleppner's condition is necessary for such a twisted groupoid von Neumann algebra to be a factor. It is unclear if the converse to the following lemma is true for ergodic twisted groupoids. 

\begin{lemma}\label{lemma:Kleppner's condition is needed}
    Let $(\cG, \mu, \omega)$ be a twisted measured discrete groupoid. If $L_\omega(\cG)$ is a factor, then $\cG$ satisfies Kleppner's condition.
\end{lemma}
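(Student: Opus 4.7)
The plan is to prove the contrapositive. Suppose Kleppner's condition fails, so there is an $\omega$-regular Borel bisection $A \subset \Iso(\cG)$ with $\mu_s(\Omega_A) < \infty$ and $A \not\subset \cG\zero$. By Theorem \ref{theorem:Factoriality twisted}, it suffices to prove that $\Omega_A$ is a central subset of $\Iso(\cG)$ in the sense of \eqref{eq:f central set}, since then the hypothesis $A \not\subset \cG\zero$ produces a nontrivial element of $\cZ(L_\omega(\cG))$ outside $L^\infty(\cG\zero,\mu)^\cG$, contradicting factoriality.

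The candidate function is $f: \Omega_A \to \T$ defined by $f \equiv 1$ on $A$ and extended by the formula $f(h) := \omega(g, x)\overline{\omega(h, g)}$ whenever $h = gxg\inv$ with $x \in A$ and $s(g) = s(x)$. This prescription is forced by the central condition together with $f|_A \equiv 1$. Borel measurability is straightforward once one fixes a basis $\cB = \{B_n\}$ of $\cG$ and writes $\Omega_A$ as the disjoint union of the Borel sets $(B_n A B_n\inv) \setminus \bigcup_{m<n} B_m A B_m\inv$; on each piece the representative $(g,x) \in B_n \times A$ is recovered Borel-measurably from $h$ by injectivity of $t|_{B_n}$.

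The real content is verifying well-definedness, and this is where $\omega$-regularity enters. Suppose $h = g_1 x_1 g_1\inv = g_2 x_2 g_2\inv$ with $x_i \in A$; setting $g := g_2\inv g_1$ one finds $x_2 = g x_1 g\inv$ with both $x_1, x_2 \in A$, so $\omega$-regularity yields $\omega(x_2, g) = \omega(g, x_1)$, equivalently the projective commutation $\lambda_g^\omega \lambda_{x_1}^\omega = \lambda_{x_2}^\omega \lambda_g^\omega$ in $L_\omega(\cG)$. Combining this with the identities $\lambda_h^\omega \lambda_{g_i}^\omega = \omega(h, g_i)\overline{\omega(g_i, x_i)}\lambda_{g_i}^\omega \lambda_{x_i}^\omega$ (coming from $h g_i = g_i x_i$) and $\lambda_{g_1}^\omega = \overline{\omega(g_2, g)} \lambda_{g_2}^\omega \lambda_g^\omega$, one cancels the common factor $\lambda_{g_2 x_2}^\omega$ to arrive at the scalar identity $\omega(g_1, x_1)\omega(h, g_2) = \omega(h, g_1)\omega(g_2, x_2)$, which is exactly the equality of the two expressions for $f(h)$.

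The central condition $f(ghg\inv) = \overline{\omega(ghg\inv, g)}\omega(g, h) f(h)$ is then a direct cocycle calculation: writing $h = k x k\inv$ so that $ghg\inv = (gk) x (gk)\inv$, applying the cocycle equation to the triples $(ghg\inv, g, k)$, $(g, h, k)$ and $(g, k, x)$, and using $hk = kx$ reduces the identity to the tautology $\omega(g, hk) = \omega(g, kx)$. The main obstacle throughout is the cocycle bookkeeping in the well-definedness step; phrasing that argument through projective products in $L_\omega(\cG)$ rather than at the scalar level is what makes the role of $\omega$-regularity completely transparent.
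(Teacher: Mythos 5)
Your proof is correct and follows essentially the same route as the paper: your candidate function $f(gxg\inv)=\omega(g,x)\overline{\omega(gxg\inv,g)}$ coincides with the paper's $f_0(g,x)=\omega(g,x)\omega(gx,g\inv)$ via the normalization identity $\omega(gx,g\inv)=\overline{\omega(gxg\inv,g)}$, and your well-definedness argument is the paper's $\omega$-regularity computation rewritten in terms of projective products. The only (harmless) difference is that you conclude by feeding the central set $\Omega_A$ into Theorem~\ref{theorem:Factoriality twisted}, whereas the paper re-verifies directly that $f\lambda_{\Omega_A}$ commutes with every $\lambda_B^\omega$.
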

\begin{proof}
    Suppose that $\cG$ does not satisfy Kleppner's condition. Let $A$ be an $\omega$-regular bisection with $\mu_s(\Omega_A) < \infty$ and $A \not\subset \cG\zero$. Note that the latter condition is equivalent to $\Omega_A \not\subset \cG^{(0)}$. Let $\cA := \{ (g, a) \in \cG \times A\ |\ s(g) = t(a)\}$  and define
    \begin{equation*}
        f_0: \cA \to \T : f_0(g,a) = \omega(g, a)\omega(ga, g\inv).
    \end{equation*}
    Whenever $x,y \in A$ and $g,h \in \cG$ are such that $gxg\inv = hyh\inv$, note that the $\omega$-regularity of $A$ implies that $\omega(y, h\inv g) = \omega(h\inv g, x)$. The following direct computation using this observation then shows that $f_0(g,x) = f_0(h,y)$. We indicate in red in each line which terms are changing to the next line.
    \begin{align*}
        \omega(g, x) \omega(gx, g\inv) \overline{\omega(h, y)}\ {\color{red} \overline{\omega(hy, h\inv)}} &= \omega(g, x) {\color{red} \omega(gx, g\inv)} \overline{\omega(h, y)} {\color{red} \omega(gxg\inv, h)}\\
        &= {\color{red} \omega(g,x) \omega(gx, g\inv h)} \omega(g\inv, h)\overline{\omega(h, y)}\\
        &= {\color{red} \omega(g, xg\inv h) \omega(x, g\inv h)} \omega(g\inv, h) \overline{\omega(h, y)}\\
        &= {\color{red} \omega(g, g\inv h y) \omega(g\inv h, y)} \omega(g\inv, h) \overline{\omega(h, y)}\\
        &= \omega(g, g\inv h) \omega(h, y) \omega(g\inv, h) \overline{\omega(h, y)}\\
        &= 1\,.
    \end{align*}
    
    The map $(g,a) \mapsto gag\inv$ is a countable-to-one Borel surjection from $\cA$ onto $\Omega_A$. Therefore there exists a Borel section $\sigma: \Omega_A \to \cA$. We then get that the formula $f(x) = f_0(\sigma(x))$ defines a Borel map which is independent of the choice of section. Additionally, another direct computation below shows that $f$ satisfies
    \begin{equation*}
        f(gxg\inv) = \omega(g, x)\omega(gx, g\inv) f(x)
    \end{equation*}
    for all $x \in \Omega_A$ and $g \in \cG$. Indeed, writing $x = hah\inv$ for some $a \in A$, we make the following computation, where we again indicate in red in each line which terms are changing to the next line,
    \begin{align*}
        \omega(hg, a) &{\color{red} \omega(hga, g\inv h\inv)} \overline{\omega(g,a)}\ \overline{\omega(ga, g\inv)}\ \overline{\omega(h, gag\inv)}\ {\color{red}\overline{\omega(hgag\inv, h\inv)}}\\
        &= {\color{red} \omega(hg, a) \omega(hga, g\inv)} \overline{\omega(g\inv, h\inv)}\ \overline{\omega(g,a)}\ \overline{\omega(ga, g\inv)}\ \overline{\omega(h, gag\inv)}\\
        &= {\color{red}\omega(hg, ag\inv)} \omega(a, g\inv) {\color{red}\overline{\omega(g\inv, h\inv)}}\ \overline{\omega(g,a)}\ \overline{\omega(ga, g\inv)}\ \overline{\omega(h, gag\inv)}\\
        &= {\color{red}\omega(h, gag\inv)} \omega(g, ag\inv)  \omega(a, g\inv) \overline{\omega(g,a)}\ \overline{\omega(ga, g\inv)}\ {\color{red}\overline{\omega(h, gag\inv)}}\\
        &= 1\,.
    \end{align*}
    
    By construction $f \lambda_{\Omega_A} \in L_\omega(\cG) \setminus L^\infty(\cG\zero, \mu)$. We now prove that $f \lambda_{\Omega_A} \in \cZ(L_\omega(\cG))$. Let $B \subset \cG$ be any Borel bisection. For any $b \in B$ and $gag^{-1} \in \Omega_A$, such that $t(g) = s(b)$, we have 
    \begin{align*}
        b(gag^{-1}) = bgag^{-1}s(b) = (bg)a(bg)^{-1}b \in \Omega_A \cdot B\;.
    \end{align*}
    Therefore $B \cdot \Omega_A \subseteq \Omega_A \cdot B$. Similarly, $\Omega_A \cdot B \subseteq B \cdot \Omega_A$ and thus $B \cdot \Omega_A = \Omega_A \cdot B$. For any element $bx = y b \in B \cdot \Omega_A$ with $x, y \in \Omega_A$ we have
    \begin{equation*}
        j(\lambda_B (f \lambda_{\Omega_A}))(bx) = \omega(b, x)f(x) = \omega(y, b) f(y) = j\left( (f \lambda_{\Omega_A})\lambda_B \right)(yb)\,,
    \end{equation*}
    which shows that $f \lambda_{\Omega_A} \in \cZ(L_\omega(\cG))$.
\end{proof}

\section{Corollaries and examples}\label{Section:Corollaries and examples}

\subsection{Group bundles}

When $(\cG,\mu)$ is a countable discrete measured groupoid we write $\Gamma_x=\{g\in \cG: s(g)=t(g)=x\}$ for the \emph{isotropy group} at $x\in \cG\zero$. In the special case that $\cG=\Iso(\cG)$ we get that $\cG$ is a measurable field of groups $\cG=\bigsqcup_{x\in \cG\zero} \Gamma_x$ and the characterization from Theorem \ref{theorem:Factoriality untwisted} reduces to the following. 

\begin{proposition}
    \label{proposition: group bundles}
    Suppose that $(\cG,\mu)$ is a countable discrete groupoid such that $\cG=\Iso(\cG)$. Then $\cG$ is a measurable field of countable discrete groups $\cG=\bigsqcup_{x\in \cG\zero }\Gamma_x$. We have that $\mathcal{Z}(L(\cG))=L^{\infty}(\cG\zero,\mu)$ if and only if $\Gamma_x$ is icc for a.e.\ $x\in \cG\zero$. 
\end{proposition}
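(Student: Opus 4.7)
The plan is to combine Theorem~\ref{theorem:Factoriality untwisted} with the fiberwise characterisation of the icc condition in Proposition~\ref{prop:measure class icc}.

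First, since $\cG=\Iso(\cG)$, the associated equivalence relation $\cR_\cG$ is the diagonal on $\cG\zero$, so every function in $L^\infty(\cG\zero,\mu)$ is trivially $\cG$-invariant. Hence $L^\infty(\cG\zero,\mu)^\cG = L^\infty(\cG\zero,\mu)$, and Theorem~\ref{theorem:Factoriality untwisted} reduces the claim to the equivalence: $\cG$ is icc if and only if $\Gamma_x$ is icc for a.e.\ $x$. By Proposition~\ref{prop:measure class icc}, $\cG$ is icc iff every bisection $A\subset \Iso(\cG)\setminus \cG\zero$ satisfies $|A_x|=+\infty$ for a.e.\ $x\in s(A)$. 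When $\cG=\Iso(\cG)$, for such an $A$ and $x\in s(A)$ the bisection property forces $A\cap \Gamma_x$ to consist of a single non-identity element $a_x$, and a direct unpacking of the definition of $A_x$ gives $A_x=\{g a_x g\inv : g\in \Gamma_x\}$, i.e.\ the $\Gamma_x$-conjugacy class of $a_x$.

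The $(\Leftarrow)$ direction is then immediate: if $\Gamma_x$ is icc for a.e.\ $x$, then for a.e.\ $x\in s(A)$ the conjugacy class of $a_x \ne e_x$ in $\Gamma_x$ is infinite, so $|A_x|=+\infty$. For $(\Rightarrow)$ I would argue by contrapositive. Assuming the set of $x$ where $\Gamma_x$ fails to be icc has positive measure, define
\[
F_n := \{ g\in \cG\setminus \cG\zero : |\Gamma_{s(g)}\cdot g|\le n\}.
\]
Since the conjugation map $(g,h)\mapsto hgh\inv$, defined on the Borel set $\{(g,h): s(h)=t(h)=s(g)\}$, has countable fibres, Lusin--Novikov implies that $g\mapsto |\Gamma_{s(g)}\cdot g|$ is a Borel function, so each $F_n$ is Borel. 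By hypothesis $\bigcup_n s(F_n)$ has positive measure, so some $F_{n_0}$ has $\mu(s(F_{n_0}))>0$. A second application of Lusin--Novikov to $s\restriction F_{n_0}$ yields a Borel bisection $A\subset F_{n_0}$ with $\mu(s(A))>0$, and by construction $|A_x|\le n_0$ for every $x\in s(\Omega_A)$. Then
\[
\mu_s(\Omega_A) = \int_{s(\Omega_A)} |A_x|\,d\mu(x) \le n_0 < \infty,
\]
so $A$ is a non-null Borel subset of $\Iso(\cG)\setminus \cG\zero$ with $\mu$-finite conjugacy class, contradicting the icc condition on $\cG$.

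The principal technical point is the Borel-measurability of $F_n$, which reduces via Lusin--Novikov to the countability of the isotropy fibres together with Borelness of conjugation; the remainder of the argument is a direct translation between the fibrewise and groupoid-level formulations of icc.
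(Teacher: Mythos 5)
Your proposal is correct and follows essentially the same route as the paper: reduce via Theorem~\ref{theorem:Factoriality untwisted} and Proposition~\ref{prop:measure class icc} to the statement that $\cG$ is icc iff a.e.\ fibre group is icc, and handle the only delicate point---measurably selecting a non-trivial element with finite conjugacy class over the non-icc locus---by Lusin--Novikov. The paper packages that selection as a standalone measurable-section lemma (Lemma~\ref{lem:measurable section}) whereas you stratify $\cG\setminus\cG\zero$ by the level sets $F_n$ of the Borel function $g\mapsto|C_g|$ and extract a bisection from one level set, but this is the same idea in a different wrapper.
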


\begin{proof}
      Write $Y=\{x\in \cG\zero: \Gamma_x \text{ is not icc}\}$, which is Borel by Lemma \ref{lem:measurable section} below. For an element $g\in \cG$ we write $C_g$ for the conjugacy class of $g$ in $\Gamma_{s(g)}$. 
      We will prove that $\mu(Y)=0$ if and only if for every nontrivial bisection $A\subset \cG\setminus \cG\zero$ we have that $|C_{g}|=+\infty$ for a.e.\ $g\in A$. Then the result follows from Proposition \ref{prop:measure class icc} and Theorem \ref{theorem:Factoriality untwisted}.

     First assume that $\mu(Y)>0$. Denote by $e_x$ the neutral element in $\Gamma_x$ for every $x$ in $X$. By Lemma \ref{lem:measurable section} below there exists a section $\sigma\colon \cG\zero\rightarrow \cG$ such that for every $x\in Y$ we have that $\sigma(x)\neq e_x$ and $\sigma(x)$ has finite conjugacy class. Then $\sigma(Y)\subset \cG\setminus\cG\zero$ is a nontrivial bisection and $|C_g|<+\infty$ for every $g\in \sigma(Y)$ by construction.

     Conversely, assume that $\mu(Y)=0$ and let $A\subset \cG\setminus \cG\zero$ be a nontrivial bisection. By definition $|C_g|=+\infty$ for every $g\in A\cap s^{-1}(\cG\zero\setminus Y)$, which is a conull set inside $A$.
\end{proof}

\begin{lemma}\label{lem:measurable section}
    Let $(\cG,\mu)$ be a discrete Borel groupoid. Write $\Gamma_x=\{g\in \cG:s(g)=t(g)\}$ for $x\in\cG\zero$ and for an element $g\in \Gamma_x$ write $C_g\subset \Gamma_x$ for its conjugacy class. The sets $B=\{x\in \cG\zero \mid \Gamma_x \text{ is trivial}\}$ and $A=\{x\in\cG\zero\mid \Gamma_x \text{ is icc}\}$ are Borel and there exists a Borel section $\sigma\colon \cG\zero\rightarrow  \Iso(\cG)$ such that $\sigma(x)$ is nontrivial and has finite conjugacy class for every $x\in \cG\zero\setminus A$.
\end{lemma}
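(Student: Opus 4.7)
The plan is to fix a basis $\{C_n\}_{n \geq 1}$ for $\Iso(\cG) \setminus \cG\zero$, obtained by applying Proposition \ref{Proposition:basis of groupoid} to the Borel groupoid $\Iso(\cG)$ and discarding the unit space. Each $C_n$ is a Borel bisection contained in $\Iso(\cG)$, so $s|_{C_n}$ is a Borel isomorphism onto a Borel set $s(C_n) \subset \cG\zero$; let $c_n : s(C_n) \to C_n$ denote its inverse. The Borelness of $B = \cG\zero \setminus \bigcup_{n \geq 1} s(C_n)$ is then immediate.

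For $A$, the key observation is that $\{C_n\}_{n \geq 1}$ partitions $\Iso(\cG) \setminus \cG\zero$, so for $x \in s(C_n)$ the conjugacy class $C_{c_n(x)} \subset \Gamma_x$ of the nontrivial element $c_n(x)$ satisfies $|C_{c_n(x)}| = |\{m \geq 1 : x \in E_{n,m}\}|$, where $E_{n,m}$ is the set of $x \in s(C_n) \cap s(C_m)$ such that $h\, c_n(x)\, h\inv = c_m(x)$ for some $h \in \Gamma_x$. To show $E_{n,m}$ is Borel, I would write it as the image $s(T_{n,m})$ of the evidently Borel set
\[ T_{n,m} = \{h \in \Iso(\cG) : s(h) \in s(C_n) \cap s(C_m),\; h\, c_n(s(h))\, h\inv = c_m(s(h))\}, \]
and invoke the Lusin--Novikov theorem: since $s$ is countable-to-one, $s$-images of Borel sets are Borel. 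It then follows that
\[ \cG\zero \setminus A \;=\; \bigcup_{n \geq 1} \bigcup_{k \geq 1} \bigl\{x \in s(C_n) : |\{m : x \in E_{n,m}\}| \leq k\bigr\} \]
is Borel, giving Borelness of $A$.

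To construct $\sigma$, I would set $N(x) := \min\{n \geq 1 : x \in s(C_n),\; |C_{c_n(x)}| < \infty\}$ for $x \in \cG\zero \setminus A$; this minimum exists because non-icc-ness of $\Gamma_x$ produces a nontrivial finite-conjugacy element, which then lies in a unique $C_n$. Defining $\sigma(x) = c_{N(x)}(x)$ on $\cG\zero \setminus A$ and $\sigma(x) = x$ elsewhere yields a Borel function (the level set $\{N = n\}$ is Borel and $\sigma$ coincides with $c_n$ there) with the required properties: by construction $\sigma(x) \in C_{N(x)} \subset \Iso(\cG) \setminus \cG\zero$ is nontrivial and has finite conjugacy class for every $x \notin A$.

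The main obstacle is the Borelness of $E_{n,m}$; once one knows that images of Borel sets under $s$ are Borel, which is precisely the content of Lusin--Novikov combined with the countable-to-one hypothesis on the source map, the remaining bookkeeping is straightforward. A minor subtlety worth noting is that the conjugation witnessing membership in $E_{n,m}$ is a priori within $\Gamma_x$, but this is automatic in the definition of $T_{n,m}$: any $h \in \Iso(\cG)$ with $s(h) = s(c_n(s(h)))$ automatically satisfies $t(h) = s(h)$, so $h \in \Gamma_{s(h)}$.
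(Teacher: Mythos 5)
Your proof is correct and follows essentially the same route as the paper's: both use Lusin--Novikov to partition $\Iso(\cG)\setminus\cG\zero$ into countably many Borel bisections, establish Borelness of the conjugacy-class-size function via the fact that countable-to-one Borel maps have Borel images (equivalently, Borel fiber-counting functions), and define $\sigma$ by selecting the minimal index witnessing a nontrivial element of finite conjugacy class. The only difference is cosmetic bookkeeping: you count $|C_{c_n(x)}|$ by counting the indices $m$ with $x\in E_{n,m}$, whereas the paper counts the fibers of $s$ restricted to the Borel set $\bigcup_n U_n\sigma(\cG\zero)U_n^{-1}$ of all conjugates.
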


\begin{proof}
    The source map $s\colon \Iso(\cG)\rightarrow \cG\zero$ is a surjective and countable-to-one Borel map. So by the Lusin--Novikov theorem there exists a countable collection of Borel subsets $U_n \subset \Iso(\cG)$ such that $\Iso(\cG)=\bigcup_n U_n$ and such that $s\colon U_n\rightarrow \cG\zero$ is a Borel isomorphism for every $n$. Note that $x \in B$ if and only if $x \in U_n$ for every $n$, where we identify $x$ and $e_x$. Then we have that $B=\bigcap_{n}(\cG\zero\cap U_n)$, which is Borel. 

    The restriction of $s$ to $\Iso(\cG)\setminus\cG\zero$ is again countable-to-one, so we find countably many Borel section $\tau_n\colon \cG\zero\setminus B\rightarrow \Iso(\cG)$ such that $\Iso(\cG)\setminus \cG\zero=\bigcup_n \tau_n(\cG\zero)$. For $g\in \Gamma_x$ write $C_g\subset \Gamma_x$ for its conjugacy class. If $\sigma\colon \cG\zero\rightarrow \Iso(\cG)$ is a Borel section, then the subset $\bigsqcup_{x\in \cG\zero} C_{\sigma(x)}=\bigcup_n U_n\sigma(\cG\zero)U_n^{-1}\subset \Iso(\cG)$ is Borel. Then, since for any countable-to-one Borel map $q$, the assignment $x\mapsto |q^{-1}(x)|$ is Borel, we get that $x\mapsto |C_{\tau_n(x)}|$ is Borel for every $n$. Now define Borel maps
    \begin{align}
        f_n\colon \cG\zero\setminus B\rightarrow \mathbb{N}\cup \{+\infty\}:\quad f_n(x)=\begin{cases}+\infty &\text{ if }  |C_{\tau_n(x)}|=+\infty,\\
        n &\text{ if }|C_{\tau_n(x)}|<+\infty,\end{cases}
    \end{align} 
    and set $f(x)=\inf_n f_n(x)$. Then we have that $A=f^{-1}(\{+\infty\})\cup B$. Define the Borel section $\sigma\colon \cG\zero\rightarrow \Iso(\cG)$ by
    \begin{align}
        \sigma(x)=\begin{cases}\tau_n(x) &\text{ if } f(x)=n<+\infty,\\
        e_x &\text{ if } x \in A.
        \end{cases}
    \end{align}
    Then $\sigma(x)$ is nontrivial and has finite conjugacy class for every $x\in \cG\zero\setminus A$.
\end{proof}

\subsection{Transformation groupoids}

As mentioned in the introduction Vaes provided necessary and sufficient conditions for when the crossed $L^{\infty}(X, \mu) \rtimes G$ by a nonsingular group action $G \actson (X,\mu)$ is a factor in \cite{362329}. In this subsection we deduce this characterization as a corollary of Theorem \ref{theorem:Factoriality untwisted}, see Corollary \ref{cor:crossed product} below. In particular, \cite[Corollary]{362329} says that if the action $G\actson (X,\mu)$ is moreover pmp, the crossed product $L^{\infty}(X, \mu)\rtimes G$ is a factor if and only if $G\actson (X,\mu)$ is ergodic and every $g \in G$ satisfying $\mu(\{x\in X: g\cdot x=x\})>0$ has an infinite conjugacy class in $G$.

Recall that to a nonsingular action $G\actson (X,\mu)$ of a countable group $G$ on a standard probability space $(X,\mu)$ we associate the \emph{transformation groupoid} $G\ltimes X$. As a set, we have that $G\ltimes X=G\times X$. The set of composable pairs is given by $(G\ltimes X)^{(2)}=\{((g,h\cdot x),(h, x))\mid g,h\in G, x\in X\}$ and multiplication is defined by $(g,h\cdot x)(h,x)=(gh, x)$. Identifying the unit space $(G\ltimes X)\zero$ with $X$, we get that $s(g,x)=x$ and $t(g,x)=g\cdot x$. There is a canonical isomorphism $L^{\infty}(X, \mu)\rtimes G\cong L(G\ltimes X)$ given by $f \mapsto f$ for $f \in L^\infty(X,\mu)$ and $u_g \mapsto \lambda_{\{g\} \times X}$ for $g \in G$, where the $u_g$ denote the canonical unitaries generating $L(G)$ inside $L^\infty(X,\mu) \rtimes G$.

Recall that for a discrete countable group $G$ a nonsingular action $G \actson (X,\mu)$ is said to be induced from $H \actson Y$ for a subgroup $H < G$ and a Borel subset $Y \subset X$ if $Y$ is $H$-invariant and the sets $\{gY \; | \; g \in G/H \}$ partition $X$ up to a set of measure zero. The proof of the next proposition closely resembles the proof of \cite[Theorem]{362329}.

\begin{proposition}\label{prop:transformation groupoid}
    Let $G \actson (X,\mu)$ be an ergodic nonsingular group action and let $\cG$ be the associated transformation groupoid. Then the following are equivalent.
    \begin{enumerate}
        \item Whenever $G\actson X$ is induced from $H\actson Y$ and $h\in H \setminus\{e\}$ acts trivially on $Y$, $h$ has an infinite conjugacy class in $H$.
        \item The transformation groupoid $\cG$ is icc. 
    \end{enumerate}
\end{proposition}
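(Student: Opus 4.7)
The plan rests on two explicit facts about $\cG = G \ltimes X$: the isotropy decomposes as $\Iso(\cG) = \bigsqcup_{g \in G}\{g\} \times X_g$ where $X_g = \{x \in X : g \cdot x = x\}$, and conjugation of $(g,x) \in \Iso(\cG)$ by $(h,x) \in \cG$ yields $(h g h\inv, h \cdot x)$. I shall use countability of $G$ and ergodicity of the action essentially.

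For $(2) \Rightarrow (1)$, starting from icc of $\cG$, I plan to exhibit $A = \{h\} \times Y$ as a nontrivial bisection in $\Iso(\cG) \setminus \cG\zero$ and compute $\mu_s(\Omega_A)$ directly. Using the decomposition $X = \bigsqcup_i g_i Y$ (up to null) and the fact that for $k = g_i h' \in g_i H$ one has $k h k\inv = g_i (h' h (h')\inv) g_i\inv$, the group conjugates of $h$ appearing over $g_i Y$ range exactly over $g_i C_H(h) g_i\inv$. This should give $\mu_s(\Omega_A) = |C_H(h)| \cdot \sum_i \mu(g_i Y) = |C_H(h)|$, and icc then forces $|C_H(h)| = \infty$.

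For $(1) \Rightarrow (2)$, I will argue the contrapositive. Assuming a nontrivial bisection $A \subset \Iso(\cG) \setminus \cG\zero$ satisfies $\mu_s(\Omega_A) < \infty$, I first partition $A$ by its $G$-coordinate, which reduces, after passing to a subset of positive measure, to $A = \{g_0\} \times B$ for some $g_0 \ne e$ and $\mu(B) > 0$. Next I introduce the Borel map $S : X \to 2^G$ defined by $S(x) = \{k \in G : (k,x) \in \Omega_A\}$. Conjugation in $\cG$ gives the $G$-equivariance $S(\gamma \cdot x) = \gamma S(x) \gamma\inv$ almost everywhere, and after removing a $G$-invariant null set I assume it holds pointwise. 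The function $N(x) = |S(x)|$ is $G$-invariant and integrable, with $\int N \, d\mu = \mu_s(\Omega_A) < \infty$, so ergodicity forces $N \equiv n$ for some $n \in \N$; here $n \ge 1$ because $g_0 \in S(x)$ on $B$, while $e \notin S(x)$ because $\Omega_A \cap \cG\zero = \emptyset$.

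Since the set of $n$-element subsets of $G$ is a countable $G$-set under conjugation, ergodicity again forces the image of $S$ into a single $G$-conjugation orbit modulo null. Choosing $S_0$ in this orbit with $Y_0 := S^{-1}(S_0)$ of positive measure, and setting $H_0 := \{k \in G : k S_0 k\inv = S_0\}$, I plan to verify that $Y_0$ is $H_0$-invariant (by equivariance), that the translates $\{k Y_0 : [k] \in G/H_0\}$ cover $X$ modulo null (since $S(x) = \gamma S_0 \gamma\inv$ gives $\gamma\inv x \in Y_0$), and that they are pairwise disjoint modulo null (if $k_1 y_1 = k_2 y_2$ with $y_i \in Y_0$, equivariance yields $(k_2\inv k_1) S_0 (k_2\inv k_1)\inv = S_0$, hence $k_2\inv k_1 \in H_0$). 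This exhibits $G \actson X$ as induced from $H_0 \actson Y_0$. Finally, choosing any $s \in S_0$ (nonempty since $n \ge 1$), the identities $s S(y) s\inv = S(sy) = S(y) = S_0$ for $y \in Y_0$ place $s$ in $H_0$, the inclusion $s \in S(y)$ makes $s$ act trivially on $Y_0$, and the $H_0$-conjugacy class of $s$ sits inside $S_0$, hence is finite, contradicting (1). The main obstacle I anticipate is the careful handling of null sets in verifying the disjoint covering $X = \bigsqcup_{[k]} k Y_0$; the remaining steps amount to translating between the groupoid and induced-action pictures.
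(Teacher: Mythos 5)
Your proposal is correct and follows essentially the same route as the paper: the forward direction via the conjugation-equivariant set-valued map $x \mapsto \{k : (k,x) \in \Omega_A\}$, a positive-measure level set, and its stabilizer subgroup, and the converse via the explicit computation $\mu_s(\Omega_{\{h\}\times Y}) = |C|$ over the partition $X = \bigsqcup_i g_i Y$. Your intermediate ergodicity steps (constancy of $|S(x)|$ and the single-conjugation-orbit reduction) are correct refinements of the paper's shortcut of directly choosing a finite set $C$ with positive-measure level set.
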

\begin{proof}
    First assume that $\cG$ is not icc. Then there exists a nontrivial bisection $B\subset \Iso(\cG)\setminus \cG\zero$ which has $\mu$-finite conjugacy class $\Omega_B$. Denote by $\mathcal{P}(G)$ the power set of $G$ and define the function $F\colon X\rightarrow \mathcal{P}(G)$ by 
    \begin{align}
        F(x)=\{g\in G\mid 1_{\Omega_B}(g,x)=1\}\,.
    \end{align}
    Write $Z=\{x\in X \mid F(x)\neq \emptyset\}$. Then $\mu(Z)>0$ since $\mu(s(B))>0$. Furthermore, since $\Omega_B$ is conjugation-invariant,  we have that $F(g\cdot x)=gF(x)g\inv$ for every $x\in X$ and $g\in G$. In particular, $Z$ is $G$-invariant and we get that $\mu(Z)=1$. 
    Also we have that $F(x)\subset G\setminus\{e\}$ for every $x\in Z$ because $B\cap \cG\zero =\emptyset$. Since $\mu_s(\Omega_B)=\int_{X}|F(x)|\mathrm{d}\mu(x)$, we have that $|F(x)|<+\infty$ for a.e.\ $x\in Z$. There are only countably many nonempty finite subsets $C\subset G\setminus \{e\}$, so we can pick a finite nonempty set $C\subset G\setminus\{e\}$ such that the set $Y=\{x\in X:F(x)=C\}$ has positive measure. 
Then we have that $\mu(gY\cap Y)>0$ if and only if $gCg\inv= C$ if and only if $gY=Y$. So, if we write $H=\{h\in G: hCh\inv= C\}$ we get that $G\actson X$ is induced from $H\actson Y$. When $g\in C$ and $y\in Y$, we have that $1_{\Omega_B}(g,y)=1$. This means that $(g,y)\in \Omega_B\subset \Iso(\cG)$, so that $g\cdot y=y$. In particular we have that $F(g\cdot y)=F(y)$ and it follows that  $g\in H$. So $C$ is a finite subset of $H\setminus \{e\}$ that is invariant under conjugation by $H$ and all its elements act trivially on $Y$, contradicting statement $1$ of the proposition. 

Conversely, assume that $G\actson X$ is induced from $H\actson Y$ and that $h\in H\setminus\{e\}$ acts trivially on $Y$ such that $h$ has finite conjugacy class $C$. Then we have that
\begin{align}
    \Omega_{\{h\}\times Y}=\bigcup_{g\in G}gCg\inv\times gY\,.
\end{align}
Since $gCg\inv =kCk\inv$ whenever $gH=kH$, we get that 
\begin{align}
    \mu_s( \Omega_{\{h\}\times Y})=\sum_{g\in G/H}\mu_s(gCg\inv\times gY)= |C|<+\infty.
\end{align}
So $\{h\}\times Y$ is a partial bisection that has $\mu$-finite conjugacy class, contradicting statement $2$ of the proposition. 
\end{proof}

Using that $L(G\ltimes X)\cong L^\infty(X, \mu)\rtimes G$ and that $L^{\infty}(X, \mu)^G\subset \mathcal{Z}(L^\infty(X, \mu)\rtimes G)$ we thus obtain the following as a corollary of Theorem \ref{theorem:Factoriality untwisted} and Proposition \ref{prop:transformation groupoid}. 

\begin{corollary}[{\cite[Theorem]{362329}}]\label{cor:crossed product}
    Let $G\actson (X,\mu)$ be a nonsingular group action. Then the following are equivalent.
    \begin{enumerate}
        \item The action $G\actson X$ is ergodic and whenever $G\actson X$ is induced from $H\actson Y$ and $h\in H\setminus\{e\}$ acts nontrivially on $Y$, $h$ has an infinite conjugacy class in $H$.
        \item The crossed product $L^\infty(X, \mu)\rtimes G$ is a factor.
    \end{enumerate}
\end{corollary}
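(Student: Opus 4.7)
My plan is to derive the corollary by stringing together Corollary \ref{Corollary:Factoriality untwisted} and Proposition \ref{prop:transformation groupoid}, after passing through the canonical identification $L^\infty(X,\mu)\rtimes G\cong L(G\ltimes X)$ recorded just above Proposition \ref{prop:transformation groupoid}. Under this identification the crossed product is a factor precisely when $L(G\ltimes X)$ is, which is the starting point for everything that follows.

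First I would verify the elementary observation that the groupoid $G\ltimes X$ is ergodic if and only if the action $G\actson X$ is ergodic; this is immediate because the orbit equivalence relation $\cR_{G\ltimes X}$ is literally the $G$-orbit equivalence relation on $X$. Applying Corollary \ref{Corollary:Factoriality untwisted} then reduces the problem to showing that $G\ltimes X$ is icc if and only if the induced-action condition in (1) of the corollary holds. But this is exactly the content of Proposition \ref{prop:transformation groupoid}, so the corollary follows by simply chaining the two biconditionals.

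The point worth flagging, and what I would expect to be the only real obstacle, is a wording discrepancy that needs to be resolved. Proposition \ref{prop:transformation groupoid} characterises icc-ness of $G\ltimes X$ in terms of $h\in H\setminus\{e\}$ acting \emph{trivially} on $Y$, while the corollary as printed reads \emph{nontrivially}. These two conditions are not logical contrapositives of each other, and only the ``trivially'' version is actually consistent with Corollary \ref{Corollary:Factoriality untwisted}. For example, an irrational rotation $\mathbb{Z}\actson\mathbb{T}$ is free, ergodic, and pmp, so $L^{\infty}(\mathbb{T})\rtimes\mathbb{Z}$ is the hyperfinite $\mathrm{II}_1$ factor; yet in the trivial induction $H=G=\mathbb{Z}$, $Y=X=\mathbb{T}$, every non-zero element acts nontrivially on $Y$ with singleton $\mathbb{Z}$-conjugacy class, which would spuriously violate a ``nontrivially'' hypothesis. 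I would therefore prove the ``trivially'' formulation: this is the statement that is genuinely equivalent to factoriality and matches Vaes' original theorem in \cite{362329}, and with this reading the proof is the three-line deduction above, requiring no further argument beyond what is already in Proposition \ref{prop:transformation groupoid} and Corollary \ref{Corollary:Factoriality untwisted}.
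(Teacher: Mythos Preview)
Your proposal is correct and matches the paper's own argument almost verbatim: the paper states the corollary as a consequence of Theorem~\ref{theorem:Factoriality untwisted} (equivalently Corollary~\ref{Corollary:Factoriality untwisted}) together with Proposition~\ref{prop:transformation groupoid} via the isomorphism $L^\infty(X,\mu)\rtimes G\cong L(G\ltimes X)$, which is exactly the chain you describe. You are also right that ``nontrivially'' in the printed statement is a typo for ``trivially''; your irrational-rotation counterexample confirms this, and the ``trivially'' reading is the one used in Proposition~\ref{prop:transformation groupoid} and in Vaes' original result.
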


\subsection{Full subgroupoids and partial actions}

In the previous section we have demonstrated that factoriality of crossed products $L^\infty(X, \mu) \rtimes G$ is inherently linked to factoriality of certain subgroupoids of $G \ltimes X$. First we describe subgroupoids which share the same ideal structure as our original groupoid. 

\begin{definition}
    Let $\cG$ be a countable Borel groupoid. Given any Borel subset $K \subset \cG^{(0)}$, the induced subgroupoid $\cG|_K$ is defined as the product $ \cG|_K := K \cdot \cG \cdot K$. A Borel subset $K \subset \cG^{(0)}$ is said to be $\cG$-\emph{full} if $t(\cG \cdot K) = \cG^{(0)}$. If $\cG$ is furthermore endowed with a measure $\mu$, we say that $K \subset \cG^{(0)}$ is $\mu$-\emph{full} if $\mu(\cG^{(0)} \setminus t(\cG \cdot K)) = 0$. 
\end{definition}

As an example, when $\cG$ is an equivalence relation a subset $K \subset \cG\zero$ is full precisely when it intersects every orbit.

For what follows, we let $\cR$ denote the full equivalence relation on the set $\mathbb{N}$ of positive integers. Our goal is to show that subgroupoids induced from full subsets of $\cG^{(0)}$ share the same ideal structure as $\cG$. What follows is essentially the same proof as in \cite[Lemmas 2.2, 2.3, 2.4, Proposition 2.5]{Carlsen17} in the ample groupoid case. 

\begin{lemma}
\label{Lemma: fullness inductive step}
    Let $K \subset \cG^{(0)}$ be $\cG$-full. There is a bisection $W \subset \cG \times \cR$ with $s(W) \subset K \times \mathbb{N}$ and $t(W) = \cG^{(0)} \times \{1\}$. If $K$ is $\mu$-full, then the same conclusion holds almost everywhere. 
\end{lemma}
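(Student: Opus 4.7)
The plan is to use the Lusin--Novikov theorem to decompose $s^{-1}(K)$ into countably many bisections on which the target map is injective, then disjointify their images in $\cG\zero$ to get a cover of $\cG\zero$ by mutually disjoint bisections with sources in $K$, and finally encode these bisections into a single bisection of $\cG \times \cR$ by using the second coordinate to keep track of which piece we are on.

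First, since $K \subset \cG\zero$ is Borel, the set $\cG \cdot K = s^{-1}(K)$ is a Borel subset of $\cG$, and $\cG$-fullness gives $t(\cG \cdot K) = \cG\zero$. The restriction of $t$ to $\cG \cdot K$ is countable-to-one Borel, so by \cite[Theorem~18.10]{Kec95} we can write $\cG \cdot K = \bigsqcup_n U_n$ where each $U_n$ is a Borel bisection. Disjointifying the targets, set $V_1 = t(U_1)$ and $V_n = t(U_n) \setminus \bigcup_{m<n} V_m$, and let $B_n := U_n \cap t\inv(V_n)$. Then $\{B_n\}_n$ is a countable family of mutually disjoint Borel bisections with $s(B_n) \subset K$ and $\bigsqcup_n t(B_n) = \bigsqcup_n V_n = \cG\zero$.

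Next, identify $\cR$ with $\N \times \N$, with source $s(i,j)=j$ and target $t(i,j)=i$, so that a typical element $(g,(i,j))$ of $\cG \times \cR$ has source $(s(g),j)$ and target $(t(g),i)$. Define
\begin{equation*}
    W := \bigsqcup_{n \in \N} B_n \times \{(1,n)\} \;\subset\; \cG \times \cR.
\end{equation*}
This is Borel as a countable disjoint union of Borel sets. The target is $t(W) = \bigsqcup_n t(B_n) \times \{1\} = \cG\zero \times \{1\}$, and the source is $s(W) = \bigsqcup_n s(B_n) \times \{n\} \subset K \times \N$. To see $W$ is a bisection, suppose $(g_1,(1,n_1))$ and $(g_2,(1,n_2))$ in $W$ have the same source; then $n_1 = n_2 =: n$ and $s(g_1) = s(g_2)$ with $g_1,g_2 \in B_n$, so injectivity of $s|_{B_n}$ forces $g_1 = g_2$. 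Similarly, if they share a target, then $t(g_1) = t(g_2) \in V_{n_1} \cap V_{n_2}$, which forces $n_1 = n_2$ by disjointness of the $V_n$, and then injectivity of $t|_{B_n}$ yields $g_1 = g_2$.

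The only genuinely delicate point is that the construction must be Borel, which is handled cleanly by Lusin--Novikov; everything else is bookkeeping. For the $\mu$-full case, the same argument applies verbatim, except that the equality $\bigsqcup_n t(B_n) = \cG\zero$ now holds only modulo a $\mu$-null set, giving $t(W) = \cG\zero \times \{1\}$ almost everywhere.
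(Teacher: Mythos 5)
Your proof is correct and follows essentially the same route as the paper: a Lusin--Novikov decomposition of $\cG\cdot K$ into bisections, disjointification of their targets, and the packaging $W=\bigsqcup_n B_n\times\{(1,n)\}$. You additionally verify explicitly that $W$ is a bisection, which the paper leaves implicit.
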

\begin{proof}
    First we construct a sequence $B_n$ of bisections of $\cG^{(0)}$ with mutually disjoint targets such that $\bigcup_n t(B_n) = \cG^{(0)}$. Using Lusin--Novikov, there is a collection $V_n \subset \cG \cdot K$ of bisections which partition $\cG \cdot K$. These bisections need not have mutually disjoint target map, but we can define $B_n$ as follows:
    \begin{enumerate}
        \item $B_1 = V_1$. 
        \item Define 
            \begin{align*}
            B_n = V_n \setminus \left(\bigcup_{k < n} t^{-1}(t(V_k)) \right)\;.
    \end{align*}
    \end{enumerate}
    
The bisection $W := \bigcup_n B_n \times \{(1,n)\}$ gives us the correct source and target. 
\end{proof}
\begin{proposition}
    Let $K \subset \cG^{(0)}$ be $\cG$-full. There is a Borel bisection $W \subset \cG \times \cR$ with $s(W) = K \times \mathbb{N}$ and $t(W) = \cG^{(0)} \times \mathbb{N}$. If $K$ is $\mu$-full, then the same conclusion holds almost everywhere. 
\end{proposition}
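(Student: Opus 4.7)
My plan is to combine a measurable Cantor--Bernstein argument with Lemma \ref{Lemma: fullness inductive step} to produce the bisection $W$. The idea is to construct two Borel bisections --- one realizing an injection $K \times \N \hookrightarrow \cG\zero \times \N$ and one going the other way --- and then splice them into a single bisection with the required source and target.

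For the ``easy'' direction, I would take the bisection of units $P := \{(x,(n,n)) : x \in K,\; n \in \N\} \subset \cG \times \cR$, which realizes the inclusion $K \times \N \hookrightarrow \cG\zero \times \N$. For the reverse direction, I would construct a bisection $Q$ with $s(Q) \subseteq K \times \N$ and $t(Q) = \cG\zero \times \N$ by taking disjoint shifted copies of the bisection produced by Lemma \ref{Lemma: fullness inductive step}. Writing that bisection in the form $W_0 = \bigsqcup_k B_k \times \{(1,k)\}$ as in its proof, I would partition $\N = \bigsqcup_{n \geq 1} M_n$ into infinite subsets with chosen bijections $\psi_n : \N \to M_n$, and set
\begin{equation*}
W_n := \bigsqcup_k B_k \times \{(n,\psi_n(k))\},\qquad Q := \bigsqcup_{n \geq 1} W_n.
\end{equation*}
Each $W_n$ is a bisection with $t(W_n) = \cG\zero \times \{n\}$ and $s(W_n) \subseteq K \times M_n$; disjointness of the $M_n$'s in the source coordinate and of the target slabs $\cG\zero \times \{n\}$ ensures that $Q$ is a Borel bisection of the claimed form.

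Next I would apply a measurable Cantor--Bernstein argument to the Borel injections $f := \sigma_P$ (the inclusion) and $g := \sigma_{Q\inv} : \cG\zero \times \N \hookrightarrow K \times \N$. Defining $C_0 := (K \times \N) \setminus g(\cG\zero \times \N)$, $C_{k+1} := g(f(C_k))$ and $C := \bigcup_k C_k$, the set
\begin{equation*}
W := \bigl(P \cap s\inv(C)\bigr) \;\sqcup\; \bigl(Q \cap s\inv((K \times \N) \setminus C)\bigr) \;\subset\; \cG \times \cR
\end{equation*}
realizes the Cantor--Bernstein bijection $h : K \times \N \to \cG\zero \times \N$ given by $h = f$ on $C$ and $h = g\inv$ on the complement, and is therefore a Borel bisection with $s(W) = K \times \N$ and $t(W) = \cG\zero \times \N$.

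The main subtlety is verifying that the sets $C_k$ --- and hence $C$ --- are Borel, which reduces to invoking the Lusin--Suslin theorem that Borel injections between standard Borel spaces have Borel images at each step of the recursion. Once this is in hand, the disjointness of $f(C)$ and $g\inv((K \times \N) \setminus C)$ provided by the Cantor--Bernstein argument guarantees that $W$ is a bisection with the correct source and target. The $\mu$-full case carries through verbatim after discarding an appropriate null set, invoking the almost-everywhere version of Lemma \ref{Lemma: fullness inductive step}.
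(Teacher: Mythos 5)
Your proof is correct, and it lands in the same family as the paper's argument while organizing it differently. The paper omits all details, stating only that the result follows by a ``back and forth construction'' using Lemma~\ref{Lemma: fullness inductive step}, identical to \cite[Lemma 2.4]{Carlsen17}; that reference interleaves the two directions step by step, alternately arranging for each slab $\cG\zero \times \{n\}$ to appear in the target and each slab $K \times \N$ to appear in the source. You instead build two \emph{global} Borel injections up front --- the trivial inclusion $K \times \N \hookrightarrow \cG\zero \times \N$, and an injection the other way obtained by stacking countably many shifted copies of the one-step bisection over a partition of $\N$ into infinite sets --- and then splice them with a measurable Cantor--Bernstein argument. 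The two routes are logically equivalent (Cantor--Bernstein is the canonical packaging of back-and-forth), and both rest on the same key input, Lemma~\ref{Lemma: fullness inductive step}. What your version buys is self-containedness and an explicit treatment of the measurability issues: you correctly note that the sets $C_k$ are Borel because Borel injections between standard Borel spaces have Borel images (Lusin--Suslin), and you verify the containment $(K \times \N) \setminus C \subseteq s(Q)$ needed for the splice to be defined, points the paper leaves entirely to the citation. The $\mu$-full case is handled at the same level of detail as in the paper.
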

\begin{proof}
    The proof is a back and forth construction using Lemma~\ref{Lemma: fullness inductive step}, and is identical to the proof in \cite[Lemma 2.4]{Carlsen17}, so we omit the details here. 
\end{proof}
\begin{corollary}
    Let $K \subset \cG^{(0)}$ be $\cG$-full. The groupoid $\cG|_K \times \cR$ is isomorphic to $\cG \times \cR$. 
\end{corollary}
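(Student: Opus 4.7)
The plan is to construct the isomorphism explicitly by conjugating with the bisection $W$ supplied by the preceding proposition. The first observation is that
\[
\cG|_K \times \cR \;=\; (\cG \times \cR)|_{K \times \mathbb{N}},
\]
which holds because the unit space of $\cR$ is all of $\mathbb{N}$, so the restriction on the $\cR$-coordinate is vacuous. Thus it suffices to identify $\cG \times \cR$ with its reduction to the $(\cG\times\cR)$-full unit subset $K \times \mathbb{N}$.

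Since $W$ is a Borel bisection with $t(W) = \cG\zero \times \mathbb{N}$, for each $u \in \cG\zero \times \mathbb{N}$ there is a unique $w_u \in W$ with $t(w_u) = u$, and $u \mapsto w_u$ is Borel. I would then define
\[
\Phi \colon \cG \times \cR \to (\cG \times \cR)|_{K \times \mathbb{N}}, \qquad \Phi(g) \;:=\; w_{t(g)}\inv \cdot g \cdot w_{s(g)}.
\]
The element $\Phi(g)$ has target $s(w_{t(g)})$ and source $s(w_{s(g)})$, both lying in $K \times \mathbb{N}$, so $\Phi$ lands in the stated reduction and is Borel. Verifying that $\Phi$ is a groupoid homomorphism is a one-line computation: for composable $g_1, g_2$ the equality $s(g_1) = t(g_2)$ forces $w_{s(g_1)} = w_{t(g_2)}$, so the middle factor $w_{s(g_1)} \cdot w_{t(g_2)}\inv$ cancels in the product $\Phi(g_1)\Phi(g_2)$, giving $\Phi(g_1 g_2)$; inversion is handled similarly.

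For bijectivity I would construct the inverse directly. Since $s \colon W \to K \times \mathbb{N}$ is also a Borel bijection, for each $v \in K \times \mathbb{N}$ there is a unique $w'_v \in W$ with $s(w'_v) = v$, and $v \mapsto w'_v$ is Borel. Setting
\[
\Psi \colon (\cG \times \cR)|_{K \times \mathbb{N}} \to \cG \times \cR, \qquad \Psi(h) \;:=\; w'_{t(h)} \cdot h \cdot (w'_{s(h)})\inv,
\]
the identity $w_{t(w'_u)} = w'_u$ (valid because $w'_u$ is an element of $W$ with target $t(w'_u)$, and $W$ has injective target map) yields $\Phi \circ \Psi = \id$ and $\Psi \circ \Phi = \id$ after direct substitution. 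In the measured setting, where the preceding proposition only guarantees $W$ after discarding null sets, the same formula gives an isomorphism of measured groupoids almost everywhere. The only genuinely nontrivial input, the existence of $W$, has already been established, so no further obstacle remains.
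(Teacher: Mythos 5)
Your proof is correct and follows essentially the same route as the paper: conjugation by the bisection $W$ from the preceding proposition, i.e.\ $g \mapsto W\inv g W$, with inverse $h \mapsto W h W\inv$. You simply spell out the details (the identification $\cG|_K \times \cR = (\cG \times \cR)|_{K \times \mathbb{N}}$, the homomorphism check, and the explicit inverse) that the paper leaves implicit.
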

\begin{proof}
    Let $W \subset \cG \times \cR$ be a Borel bisection with $s(W) = K \times \mathbb{N}$ and $t(W) = \cG^{(0)} \times \mathbb{N}$. The map 
    \begin{align*}
        \cG \times \cR \to \cG |_K \times \cR: g \mapsto W^{-1} g W \;,
    \end{align*}
   where $W^{-1}gW = w^{-1}gw$ for the unique $w \in W$ with $t(g) = t(w)$, gives us the groupoid isomorphism, with inverse $g \mapsto WgW^{-1}$. 
\end{proof}
\begin{corollary}
    Let $(\cG,\mu)$ be a discrete measured groupoid. Let $K \subset \cG^{(0)}$ be $\mu$-full. The following are equivalent. 
    \begin{enumerate}
        \item $\cG$ is icc. 
        \item $\cG|_K$ is icc. 
    \end{enumerate}
\end{corollary}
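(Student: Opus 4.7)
The plan is to use the characterization of icc from Proposition~\ref{prop:measure class icc}, which says that a discrete measured groupoid $\mathcal{H}$ is icc exactly when for every Borel bisection $A \subset \Iso(\mathcal{H}) \setminus \mathcal{H}^{(0)}$ the fiber $A_x = s\inv(x) \cap \Omega_A$ is infinite for a.e.\ $x \in s(A)$. This reformulation reduces the entire statement to comparing fiber cardinalities in $\cG$ and in $\cG|_K$.

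The key observation, which I would establish first, is that for any Borel bisection $A \subset \Iso(\cG|_K)$ and any $x \in K$, one has $|A_x|_{\cG|_K} = |A_x|_{\cG}$. The inclusion $\Omega_A^{\cG|_K} \subset \Omega_A^{\cG}$ is automatic, and conversely for any $gag\inv \in s\inv(x) \cap \Omega_A^{\cG}$ with $a \in A$ and $s(g) = s(a)$, one has $t(g) = x \in K$ and $s(g) = s(a) \in K$ (since $A \subset \Iso(\cG|_K)$), forcing $g \in \cG|_K$. With this in hand, the implication $(1) \Rightarrow (2)$ is essentially immediate: any non-null Borel bisection $A \subset \Iso(\cG|_K) \setminus K$ is also a non-null Borel bisection in $\Iso(\cG) \setminus \cG\zero$, so icc of $\cG$ gives $|A_x|_{\cG} = \infty$ for a.e.\ $x \in s(A) \subset K$, and the observation translates this to $|A_x|_{\cG|_K} = \infty$.

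For the reverse direction, starting from a non-null Borel bisection $A \subset \Iso(\cG) \setminus \cG\zero$, I would use the $\mu$-fullness of $K$ together with Lusin--Novikov to partition $s(A)$ (modulo a null set) into countably many Borel subsets $X_n$ and produce Borel bisections $B_n \subset \cG$ with $t(B_n) = X_n$ and $s(B_n) \subset K$. Setting $A_n := A \cap s\inv(X_n)$, the conjugate $A'_n := B_n\inv \cdot A_n \cdot B_n$ is a Borel bisection sitting inside $\Iso(\cG|_K) \setminus K$ (non-units map to non-units under conjugation in the isotropy). Since $\Omega_{A'_n}^{\cG} = \Omega_{A_n}^{\cG}$, conjugation by the unique $h \in B_n$ with $t(h) = y$ induces a bijection between $s\inv(y) \cap \Omega_{A_n}^{\cG}$ and $s\inv(s(h)) \cap \Omega_{A'_n}^{\cG}$, yielding $|(A_n)_y|_{\cG} = |(A'_n)_{s(h)}|_{\cG}$. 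Applying icc of $\cG|_K$ to each nontrivial $A'_n$ and invoking the key observation then gives $|(A_n)_y|_{\cG} = \infty$ for a.e.\ $y \in X_n$, and the inequality $|A_y|_{\cG} \geq |(A_n)_y|_{\cG}$ assembles these into $|A_y|_{\cG} = \infty$ for a.e.\ $y \in s(A)$.

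The main obstacle is in the converse direction, where one must carry out the Borel decomposition of $s(A)$ and produce the transporting bisections $B_n$ measurably. This is precisely where $\mu$-fullness is used (to ensure almost every unit of $\cG$ lies in $t(\cG \cdot K)$) together with Lusin--Novikov (to extract countably many Borel bisections from the countable-to-one target map on $\cG \cdot K$). Once these ingredients are set up, tracking the fiber cardinalities through the conjugation by $B_n$ is routine.
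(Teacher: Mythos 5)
Your argument is correct, but it follows a genuinely different route from the paper. The paper deduces the corollary from the stabilization isomorphism established just before it: since $K$ is $\mu$-full, $\cG|_K \times \cR \cong \cG \times \cR$, hence $L(\cG) \ovt B(\ell^2(\N)) \cong L(\cG|_K) \ovt B(\ell^2(\N))$, and comparing centers (using $\cZ(M \ovt B(\ell^2)) = \cZ(M) \ovt \C 1$) reduces everything to Theorem \ref{theorem:Factoriality untwisted}. You instead stay entirely at the level of the measured groupoid and prove the equivalence of the two icc conditions directly via Proposition \ref{prop:measure class icc}, by showing that for a bisection $A \subset \Iso(\cG|_K)$ the fibers $s\inv(x)\cap\Omega_A$ computed in $\cG|_K$ and in $\cG$ coincide for $x \in K$, and by transporting an arbitrary bisection $A \subset \Iso(\cG)\setminus\cG\zero$ into $\Iso(\cG|_K)$ by conjugating with bisections $B_n$ landing in $K$, obtained from $\mu$-fullness and Lusin--Novikov. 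Both arguments are sound. The paper's proof is shorter given the machinery already in place, but it passes through von Neumann algebras to prove a statement that is purely about groupoids; yours is self-contained, avoids Theorem \ref{theorem:Factoriality untwisted} altogether, and yields the slightly stronger pointwise fiber-counting identity. Two routine points you gloss over but which do need a word in a full write-up: the $B_n$ must be honest bisections (injective under both $s$ and $t$, so one refines the Lusin--Novikov decomposition of $s\inv(K)$ with respect to both maps, as in Proposition \ref{Proposition:basis of groupoid}), and the transfer of ``almost every $z \in s(A'_n)$'' back to ``almost every $y \in X_n$'' uses nonsingularity of $\sigma_{B_n}$. Neither is an obstacle.
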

\begin{proof}
    Consider the identity $L(\cG) \ovt B(\ell^2(\mathbb{N})) = L(\cG \times \cR) \cong L(\cG|_K \times \cR) = L(\cG|_K) \ovt B(\ell^2(\mathbb{N}))$. Since the center of the tensor product of von Neumann algebras is the tensor product of their centers, the result now follows from Theorem \ref{theorem:Factoriality untwisted}.
\end{proof}

\begin{proposition}
    Let $G \actson X$ be a Borel action of a countable group $G$ on a standard Borel space $X$ and let $Y \subset X$ be a Borel subset. Assume there is a subgroup $H \subset G$ such that $(G \ltimes X)|_Y = H \ltimes Y$. The space $Y$ is $\cG$-full in $G \ltimes X$ if and only if $G \actson X$ is induced from $H \actson Y$. 

    Furthermore, if $\mu$ is a probability measure on $X$ so that $G\actson (X,\mu)$ is nonsingular, then $Y$ is $\mu$-full in $(G \ltimes X, \mu)$ if and only if $G \actson (X,\mu)$ is induced from $H \actson Y$. 
\end{proposition}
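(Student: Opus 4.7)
The plan is to observe that both conditions can be rephrased directly as statements about the $G$-orbits through $Y$, and then use the hypothesis $(G \ltimes X)|_Y = H \ltimes Y$ as the bridge between them. First I would unpack the definitions: since $(G \ltimes X) \cdot Y = \{(g, y) \mid g \in G, \, y \in Y\}$, we have $t((G \ltimes X) \cdot Y) = G \cdot Y$, so $Y$ is $\cG$-full iff $G \cdot Y = X$ (and $\mu$-full iff $\mu(X \setminus G \cdot Y) = 0$). Similarly, unraveling the assumption $(G \ltimes X)|_Y = H \ltimes Y$ — where the left hand side is exactly $\{(g, y) \mid y \in Y, \, g \cdot y \in Y\}$ — yields two pieces of information: (a) $Y$ is $H$-invariant, and (b) whenever $y \in Y$ and $g \in G$ satisfy $g \cdot y \in Y$, one has $g \in H$.

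For the forward direction of the Borel statement, I assume $G \cdot Y = X$. Using (a) we decompose $G \cdot Y = \bigcup_{gH \in G/H} gY$, giving the covering. For disjointness, if $g_1 y_1 = g_2 y_2$ with $y_i \in Y$ and $g_1 H \neq g_2 H$, then $g_2^{-1} g_1 \cdot y_1 = y_2 \in Y$, so (b) forces $g_2^{-1} g_1 \in H$, a contradiction. The converse is immediate: if $X = \bigsqcup_{gH \in G/H} gY$, then $X \subset G \cdot Y$ and $Y$ is $\cG$-full.

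For the measure-theoretic part, the covering argument and its converse transfer verbatim after replacing set equalities by equalities up to null sets. The only delicate point is disjointness up to null sets: if $\mu(g_1 Y \cap g_2 Y) > 0$ for some $g_1 H \neq g_2 H$, then by nonsingularity the set $B := g_1^{-1}(g_1 Y \cap g_2 Y) \subset Y$ has positive measure and satisfies $g_2^{-1} g_1 \cdot B \subset Y$. This exhibits a positive-measure subset of $(G \ltimes X)|_Y$ sitting over the single group element $g_2^{-1} g_1 \notin H$, contradicting $(G \ltimes X)|_Y = H \ltimes Y$. Conversely, if the action is induced, then $G \cdot Y$ equals $X$ up to a null set and $Y$ is $\mu$-full.

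The only mild obstacle is the bookkeeping in the measure-theoretic case — specifically, making sure that the hypothesis $(G \ltimes X)|_Y = H \ltimes Y$ interacts correctly with nonsingularity to rule out positive-measure overlaps $g_1 Y \cap g_2 Y$ for cosets $g_1 H \neq g_2 H$. Since $G$ is countable, this is handled by the usual argument that a single coset representative carries all the information.
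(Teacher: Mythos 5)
Your proof is correct and follows essentially the same route as the paper's: fullness gives the covering $\bigcup_{g} gY = X$ (up to a null set in the measured case), and the hypothesis $(G \ltimes X)|_Y = H \ltimes Y$ forces the cosets $gY$ to be disjoint, while the converse is immediate from the definition of an induced action. Your extra care with positive-measure overlaps in the nonsingular case is fine but not strictly needed, since the hypothesis is an exact set equality; the paper simply notes that the measurable case is the same argument up to negligible sets.
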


\begin{proof}
    If $G \actson X$ is induced from $H \actson Y$, then $Y$ is full by definition. Conversely, note that by fullness of $Y$, it follows that $\bigcup_{g \in G} gY = t((G \ltimes X)Y) = X$. To show the sets $gY$ are mutually disjoint, let $g \not\in H$. If there is some $y \in gY \cap Y$, then $(g,y) \in (G \ltimes X)|_Y = H \ltimes Y$. Thus, $g \in H$, reaching a contradiction. The measurable setting is the same, with equality replaced with equality up to a negligable subset. 
\end{proof}

Note that subgroupoids of $G\ltimes X$ are not necessarily crossed products. Indeed, even \emph{induced subgroupoids} of $G \ltimes X$ are not necessarily a crossed product. 
\begin{example}
    Let $\mathbb{Z} \actson \mathbb{Z}$ by left translation and consider the subset $\mathbb{N} \subset \mathbb{Z}$ of positive integers. If the groupoid $(\mathbb{Z} \ltimes \mathbb{Z})|_\mathbb{N}$ is equal to $\langle a \rangle \ltimes \mathbb{N}$ for some $a \in \mathbb{Z}$, then $\langle a \rangle$ must contain all the positive integers, which implies that $a =1$, deriving a contradiction. 
\end{example}
To get around this issue, one can consider at partial actions instead. The following is a translation of the partial dynamical system construction in the topological case to the Borel setting. See the book \cite{Exel17} and the references therein for more details. 

By a partial automorphism of a standard Borel space $X$, we mean a function $\sigma: X_d \to X_r$, where $\sigma$ is a Borel bijection between Borel subsets $X_d,X_r \subset X$. A partial action $G \curvearrowright_\sigma X$ is a collection of Borel sets $\{X_g \;|\; g \in G \}$ and partial isomorphisms
    \begin{align*}
        \sigma_g : X_{g^{-1}} \to X_g 
    \end{align*}
    such that $\sigma_e = \id_X$ and for all $g,h \in G$, $\sigma_g \circ \sigma_h \subset \sigma_{gh}$. Here, we are treating $\sigma_g$ and $\sigma_h$ by its graph, and so
    \begin{align*}
        \sigma_g \circ \sigma_h = \{(x,y) \;|\; \text{there exists }z \text{ such that }(x,z) \in \sigma_h, (z,y) \in \sigma_g \}\;. 
    \end{align*}

    Given $G \curvearrowright_\sigma X$, we can define the Borel groupoid 
    \begin{align*}
        G \ltimes_\sigma X = \{(g,x) \;|\; g \in G, x \in X_{g^{-1}}\} \;. 
    \end{align*}
    The source, target, multiplication, and inversion maps are defined exactly as in the case of global group actions. In particular, we have 
    \begin{align*}
        \Iso(G \ltimes_\sigma X) = \{(g,x) \;|\; x \in X_g \cap X_{g^{-1}}, \sigma_g(x) = x \} \;. 
    \end{align*}
    
    For $g \in G$, we let $\Fix(g) := \{x \in X_g \cap X_{g^{-1}} \;|\; \sigma_g(x) = x \}$. We have the identity 
    \begin{align*}
        \Omega_{\{g\} \times \Fix(g)} = \bigcup_{h \in G} \{hgh^{-1}\} \times (X_h \cap \Fix(g))  \;. 
    \end{align*}

The situation for partial crossed products is quite different in this sense. 
\begin{proposition}
    Let $G \actson_\sigma X$ be a partial action on a standard Borel space $X$. For any Borel subset $Y \subset X$, the restriction map induces a partial action $G \actson_\sigma Y$ such that the induced subgroupoid $(G \ltimes X)|_Y := \{(g,x) \in G \ltimes X \; | \; x, g\cdot x \in Y\}$ is equal to $G \ltimes Y$. 
\end{proposition}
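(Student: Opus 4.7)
The plan is to first define the restricted partial action $G \actson_\sigma Y$ in the natural way, then verify that it satisfies the partial action axioms, and finally check that its associated transformation groupoid is precisely the induced subgroupoid $(G \ltimes X)|_Y$. Throughout, the main content is essentially bookkeeping: we must carefully track domains and codomains of the partial isomorphisms $\sigma_g$ upon restriction.

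For the first step, for each $g \in G$ I would set
\begin{equation*}
    Y_{g^{-1}} := \{x \in Y \cap X_{g^{-1}} \mid \sigma_g(x) \in Y\}, \qquad Y_g := \sigma_g(Y_{g^{-1}}),
\end{equation*}
and define $\sigma_g^Y := \sigma_g \restriction_{Y_{g^{-1}}}$. Borel measurability of $Y_{g^{-1}}$ and $Y_g$ follows from Borel measurability of $Y$, $X_g$, $X_{g^{-1}}$ and $\sigma_g$. Clearly $\sigma_g^Y: Y_{g^{-1}} \to Y_g$ is a Borel bijection with inverse $\sigma_{g^{-1}}^Y$, and $\sigma_e^Y = \id_Y$.

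Next, I would verify the composition axiom $\sigma_g^Y \circ \sigma_h^Y \subseteq \sigma_{gh}^Y$. If $x \in \mathrm{dom}(\sigma_g^Y \circ \sigma_h^Y)$, then $x \in Y_{h^{-1}}$ with $\sigma_h(x) \in Y_{g^{-1}}$; in particular $x \in X_{h^{-1}}$ with $\sigma_h(x) \in X_{g^{-1}}$, so by the partial action axiom on $X$ we have $x \in X_{(gh)^{-1}}$ and $\sigma_{gh}(x) = \sigma_g(\sigma_h(x))$. Since both $x \in Y$ and $\sigma_{gh}(x) = \sigma_g(\sigma_h(x)) \in Y$ by construction, we conclude $x \in Y_{(gh)^{-1}}$ and $\sigma_{gh}^Y(x) = \sigma_g^Y(\sigma_h^Y(x))$, as required.

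Finally, I would identify the two groupoids as sets by unpacking the definitions. On the one hand,
\begin{equation*}
    (G \ltimes_\sigma X)|_Y = \{(g,x) \mid x \in X_{g^{-1}},\ x \in Y,\ \sigma_g(x) \in Y\},
\end{equation*}
and on the other hand the transformation groupoid of $G \actson_\sigma Y$ is $\{(g,x) \mid x \in Y_{g^{-1}}\}$, which by the definition of $Y_{g^{-1}}$ matches the above set on the nose. The source, target, multiplication and inversion maps in both groupoids are defined by the same formulas $(g,x) \mapsto x$, $(g,x) \mapsto \sigma_g(x)$, $(g,\sigma_h(x))(h,x) = (gh,x)$, and $(g,x)^{-1} = (g^{-1}, \sigma_g(x))$, so the identification is as Borel groupoids.

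I do not expect any serious obstacle; the only subtlety worth noting is that the domain $Y_{g^{-1}}$ is strictly smaller than $Y \cap X_{g^{-1}}$ in general (one must also demand $\sigma_g(x) \in Y$), and this is precisely what is needed to match the set defining $(G \ltimes_\sigma X)|_Y$.
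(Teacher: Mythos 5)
Your proof is correct and follows essentially the same route as the paper: define $Y_{g^{-1}}$ by requiring both $x\in Y$ and $\sigma_g(x)\in Y$, verify that the composition axiom survives the restriction (the paper likewise flags this as the only non-trivial point), and then observe that the sets $\{g\}\times Y_{g^{-1}}$ describe both groupoids. Your write-up is somewhat more detailed than the paper's, but there is no substantive difference.
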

\begin{proof}
    Set $Y_g = \{x \in X_g \; | \; x \in Y, \sigma_{g^{-1}}(x) \in Y \}$. For any $x \in Y_g$, $\sigma_{g^{-1}}(x) \in Y_{g^{-1}}$.  It follows that $\sigma_g\colon Y_{g\inv} \rightarrow Y_{g}$ is a bijection with inverse $\sigma_{g^{-1}}$. The only non-trivial fact to verify is that $(\sigma_g \circ \sigma_h)(x) \in Y_{gh}$ if $x \in \sigma_h^{-1}(Y_h \cap Y_{g\inv})$. Since $(\sigma_g \circ \sigma_h)(x) = \sigma_{gh}(x) \in Y \cap X_{gh}$, it is enough to verify that $x \in Y_{(gh)^{-1}}$. This follows from the fact that $x, \sigma_{gh}(x) \in Y$. Finally, to see that $(G \ltimes X)|_Y = G \ltimes Y$, observe that $\{g\} \times Y_{g^{-1}}$ form a basis for both groupoids. 
\end{proof}

Indeed, partial actions always come from the restriction of a global action. The proof is a modification of \cite[Theorem 2.5]{Abadie03} in the topological case. 
\begin{proposition}
    Let $G \actson_\sigma Y$ be a partial action on a standard Borel space $Y$. Then there is an embedding $Y \hookrightarrow X$ into a standard Borel space $X$ and a global action $G \actson_\theta X$ such that $G \ltimes Y = (G \ltimes X)|_Y$. Furthermore, $X$ can be chosen such that $Y$ is $G \ltimes X$-full. 
\end{proposition}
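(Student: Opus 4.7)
The plan is to realize $X$ as the enveloping (or globalization) action of the partial action $G \actson_\sigma Y$, mimicking Abadie's topological construction in \cite{Abadie03}. On the standard Borel space $\widetilde X := G \times Y$ (with $G$ given the discrete Borel structure), define the relation
\begin{equation*}
    (g,y) \sim (h,z) \quad\Longleftrightarrow\quad y \in Y_{g\inv h} \text{ and } z = \sigma_{h\inv g}(y),
\end{equation*}
which is designed to encode the identification $g \cdot y = h \cdot z$ in the would-be global action. Reflexivity is trivial since $\sigma_e = \id_Y$; symmetry uses that $\sigma_{h\inv g}$ is the inverse of $\sigma_{g\inv h}$; and transitivity is exactly the content of the axiom $\sigma_a \circ \sigma_b \subset \sigma_{ab}$ applied to a compatible chain.

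To turn the quotient $\widetilde X/{\sim}$ into a standard Borel space I would exhibit an explicit Borel transversal. Fix a well-order $\prec$ on $G$ with $e$ minimal and set
\begin{equation*}
    X := \bigsqcup_{g \in G} \{g\} \times \Bigl( Y \setminus \bigcup_{h \prec g} Y_{g\inv h} \Bigr).
\end{equation*}
Each $\sim$-class is countable, and the first-coordinate projection is injective on it (because $(g,y) \sim (g,z)$ forces $z = \sigma_e(y) = y$); hence each class admits a unique $\prec$-minimal representative, and $X$ is precisely the collection of such representatives. As $X$ is a Borel subset of $\widetilde X$, it inherits a standard Borel structure, and the map $\iota \colon Y \hookrightarrow X$, $y \mapsto (e,y)$, is a well-defined Borel embedding by the minimality of $e$.

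Next, define the global action by letting $\theta_g(x)$, for $x = (h,y) \in X$, be the unique representative in $X$ of the $\sim$-class of $(gh, y)$. That $\theta$ is a genuine group action follows from left multiplication on the first coordinate respecting $\sim$, and Borelness reduces to that of the retraction $\widetilde X \to X$. To obtain $G \ltimes Y = (G \ltimes X)|_Y$, observe that $\theta_g(\iota(y))$ lies in $\iota(Y) = \{e\} \times Y$ exactly when $(g,y) \sim (e,z)$ for some $z$, equivalently when $y \in Y_{g\inv}$, and in that case $\theta_g(\iota(y)) = \iota(\sigma_g(y))$. Fullness of $\iota(Y)$ is then automatic: for every $(h,y) \in X$ one has $\theta_{h\inv}(h,y) = \iota(y)$. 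The only delicate point of the argument is arranging a standard Borel structure on the quotient, and this is dissolved by the explicit transversal $X$; all remaining verifications are routine manipulations with the partial action axioms.
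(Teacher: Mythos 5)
Your proposal is correct and follows essentially the same route as the paper: the same equivalence relation on $G \times Y$, the same Borel fundamental domain $\bigsqcup_{g}\{g\}\times\bigl(Y\setminus\bigcup_{h\prec g}Y_{g\inv h}\bigr)$ (the paper writes it with an enumeration of $G$ and proves by induction that it meets every class), and the same definitions of $\iota$, $\theta$, and the fullness verification. The only cosmetic difference is that you work directly with the transversal as the model for $X$, whereas the paper passes to the quotient $(G\times Y)/R$ after noting the relation is smooth.
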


\begin{proof}
    On the Borel space $G \times Y$, define a Borel relation $R \subset (G \times Y)^2$ as follows: for two points $(g,x),(h,y) \in G \times Y$, say $(g,x)R(h,y)$ if $x \in Y_{g^{-1}h}$ and $\sigma_{h^{-1}g}(x) = y$. First let us demonstrate that $R$ is Borel.  By permuting the coordinates, assume that $R \subset G^2 \times Y^2$. We treat $\sigma_{h^{-1}g} \subset Y \times Y$ as its graph. We have the identity 
    \begin{align*}
        R = \bigsqcup_{g,h \in G} \{(g,h)\} \times \sigma_{h^{-1}g}\;.
    \end{align*}
    Furthermore, for any $(g,x) \in G \times Y$, its equivalence class is countable. Thus, $R$ is a countable Borel equivalence relation on the Borel space $G \times Y$. Next, we show that $R$ is a smooth equivalence relation. Let   $[(g,x)]$ denote the equivalence class of an element $(g,x) \in G \times Y$ under $R$. Let $(g_n)_n$ be an enumeration of $G$ and define
    \begin{align*}
        A=\bigcup_{n=0}^\infty\left(\{g_n\}\times \left(Y\setminus \bigcup_{k=0}^{n-1}Y_{g_n^{-1}g_k}\right)\right)\,.
    \end{align*}
   We claim that $A$ intersects each orbit exactly once. Suppose that $(g_i,x),(g_j,y)\in A$ are such that $(g_i,x)R (g_j,y)$. Then, if $i\neq j$, we may assume that $i<j$. In that case we get that $y\in Y_{g_j\inv g_i}$, by definition of $R$. At the same time, by definition of $A$, we get that $y\notin Y_{g_j\inv g_i}$. Therefore we must have that $i=j$ and it follows that also $x=y$. 

   Next, we show by induction that $[(g,x)]\cap A$ is nonempty for every $(g,x)\in G\times Y$. Take an arbitrary element $(g_i,x)\in G\times Y$. If $i=0$ we are done. Now assume that $i>0$. If $x\notin \cup_{k=0}^{i-1}Y_{g_i^{-1}g_k}$, then we have that $(g_i,x)\in A$ by definition. If $x\in \cup_{k=0}^{i-1}Y_{g_i^{-1}g_k}$, there exists a $k<i$ such that $x\in Y_{g_i\inv g_k}$ and we get that $(g_i,x) R (g_k,\sigma_{g_i\inv g_k}(x))$. By the induction hypothesis we get that $[(g_i,x)]\cap A=[(g_k,\sigma_{g_i\inv g_k}(x))]\cap A\neq \emptyset$.

   So $A$ is a fundamental domain for the equivalence relation $R$ and the quotient $X = (G \times Y)/R$ is a well-defined standard Borel space. 

    There is a Borel embedding $\iota: Y \hookrightarrow X : x \mapsto [e,x]$ and a $G$-action on $X$ defined by $\theta_g([h,x]) = [gh,x]$. By our construction of the equivalence relation, for any $x \in Y_{g^{-1}}$, $\theta_g([e,x]) = [g,x] = [e,\sigma_g(x)]$. Thus, we have $G \ltimes Y = (G \ltimes X)|_Y$. Finally, note that for all $(g,x) \in G \times Y$, we have $[(g,x)] = g \cdot \iota(x)$, and thus $Y$ is $G \ltimes X$-full. 
\end{proof}
\begin{corollary}
    Let $G \actson Y$ be a partial action and let $\mathcal{R}$ be the full equivalence relation on $\mathbb{N}$. There is a global action $G \actson X$ such that $(G \ltimes Y) \times \mathcal{R} \cong (G \ltimes X) \times \mathcal{R}$. 
\end{corollary}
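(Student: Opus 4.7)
The plan is to chain together the two immediately preceding results. By the proposition just above, applied to the partial action $G \actson_\sigma Y$, we obtain a standard Borel space $X$, a Borel embedding $\iota\colon Y \hookrightarrow X$, and a global action $G \actson X$ satisfying $G \ltimes Y = (G \ltimes X)|_{\iota(Y)}$, with $\iota(Y)$ moreover $(G \ltimes X)$-full. So the dilation is handed to us for free.

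The next step is to invoke the earlier corollary which states that if $K \subset \cH\zero$ is $\cH$-full for a discrete Borel groupoid $\cH$, then $\cH|_K \times \cR \cong \cH \times \cR$. I would apply this with $\cH = G \ltimes X$ and $K = \iota(Y)$. Combining the two identifications gives
\begin{equation*}
(G \ltimes Y) \times \cR \;=\; \bigl((G \ltimes X)|_{\iota(Y)}\bigr) \times \cR \;\cong\; (G \ltimes X) \times \cR \,,
\end{equation*}
which is exactly the statement. Strictly speaking, the first equality is an identification along the Borel isomorphism induced by $\iota$, so a sentence noting that $\iota$ intertwines the groupoid operations should suffice.

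There is essentially no obstacle here beyond bookkeeping, since both inputs have already been proved. The only mild point of care is to confirm that the bisection $W \subset (G \ltimes X) \times \cR$ witnessing fullness, produced in the previous proposition, really implements a groupoid isomorphism via the map $g \mapsto W^{-1} g W$ with inverse $g \mapsto W g W^{-1}$; this is immediate from $s(W) = \iota(Y) \times \N$ and $t(W) = X \times \N$. No measure theory is needed in this corollary, since the statement is purely at the level of Borel groupoids.
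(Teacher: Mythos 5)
Your proposal is correct and is precisely the argument the paper intends: the corollary is stated without proof because it follows immediately by combining the globalization proposition (which produces $G \actson X$ with $G \ltimes Y = (G \ltimes X)|_Y$ and $Y$ full) with the earlier corollary that $\cG|_K \times \cR \cong \cG \times \cR$ for a full subset $K$. Nothing further is needed.
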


Using the notion of a full subset, we can now prove an analogue of the characterization of factoriality for transformation groupoids for partial actions. 
\begin{definition}
    Let $G \actson X$ be a partial action. 
    \begin{enumerate} 
        \item We say that a Borel subset $Y \subset X$ is \emph{$G$-invariant} if for all $g \in G$, $g(Y \cap X_{g^{-1}}) = Y \cap X_g$. 
        \item We say that an element $g \in G$ \emph{acts trivially on $X$} if $g\cdot x = x$ for all $x \in X_{g^{-1}}$. In particular, $X_{g^{-1}} = X_g$. 
        \item Let $H \leq G$ and let $Y \subset X$ be an $H$-invariant Borel subset. We say that $G \actson X$ is induced from $H \actson Y$ if $Y$ is $\cG$-full in $G \ltimes X$ and $G \ltimes Y = H \ltimes Y$. 
        \item Let $\mu$ be a finite measure on $X$. Let $H \leq G$ and let $Y \subset X$ be an $H$-invariant Borel subset. We say that $G \actson X$ is induced from $H \actson Y$ if $Y$ is $\mu$-full in $G \ltimes X$ and $G \ltimes Y = H \ltimes Y$. 
    \end{enumerate} 
\end{definition}

\begin{theorem}\label{Theorem: partial action}
    Let $G \actson (X,\mu)$ be a non-singular ergodic partial action. The following are equivalent:
    \begin{enumerate}
    \item If $G \actson (X,\mu)$ is induced from $H \actson Y$ and $h \in H \setminus \{e\}$ is such that $Y_{h^{-1}}$ has non-zero measure and $h$ acts trivially on $Y$, then $h$ has infinite $H$-conjugacy class. 
    \item $G \ltimes X$ is icc. 
    \end{enumerate}
\end{theorem}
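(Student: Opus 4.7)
The plan is to parallel the proof of Proposition~\ref{prop:transformation groupoid}, with care taken to handle the domains inherent to partial actions.

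For $(2) \Rightarrow (1)$ contrapositively, suppose $G \ltimes X$ is not icc and fix a nontrivial bisection $B \subseteq \Iso(G \ltimes X) \setminus (G \ltimes X)\zero$ with $\mu_s(\Omega_B) < \infty$. Define $F \colon X \to \mathcal{P}(G)$ by $F(x) = \{g \in G \mid (g,x) \in \Omega_B\}$. Conjugation invariance of $\Omega_B$ inside the groupoid gives $F(kx) = kF(x)k\inv$ whenever $x \in X_{k\inv}$, so $Z := \{x \mid F(x) \neq \emptyset\}$ is $G$-invariant in the partial-action sense; it has positive measure (containing $s(B)$), so $\mu(Z) = 1$ by ergodicity. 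Since $B$ avoids the unit space we have $e \notin F(x)$ on $Z$, and the finiteness of $\mu_s(\Omega_B)$ gives $|F(x)| < \infty$ almost everywhere. By countability of the finite subsets of $G$, pick a finite nonempty $C \subseteq G \setminus \{e\}$ with $Y := \{x \mid F(x) = C\}$ of positive measure, and set $H := \{h \in G \mid hCh\inv = C\}$.

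To show $G \actson (X,\mu)$ is induced from $H \actson Y$: first, each $c \in C$ fixes every $y \in Y$ (since $(c,y) \in \Omega_B \subseteq \Iso(G \ltimes X)$), so $C = F(y) = F(cy) = cCc\inv$, giving $C \subseteq H$; second, fullness of $Y$ follows from applying ergodicity to the partial-$G$-invariant subset $t((G \ltimes X) \cdot Y) \supseteq Y$; third, for $(g, y) \in G \ltimes X$ with $y \in Y$, we have $gy \in Y$ iff $gCg\inv = F(gy) = C$ iff $g \in H$, giving $G \ltimes Y = H \ltimes Y$. Any $c \in C$ then lies in $H \setminus \{e\}$, acts trivially on $Y$ with $Y_{c\inv} = Y$ of positive measure, and has $H$-conjugacy class contained in $C$, hence finite; this contradicts $(1)$.

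For $(1) \Rightarrow (2)$ contrapositively, suppose $G \actson X$ is induced from $H \actson Y$ and $h \in H \setminus \{e\}$ acts trivially on $Y$ with $\mu(Y_{h\inv}) > 0$ and finite $H$-conjugacy class $C$. The set $A := \{h\} \times Y_{h\inv}$ is a bisection in $\Iso(G \ltimes X) \setminus (G \ltimes X)\zero$ with
\begin{equation*}
\Omega_A = \bigcup_{k \in G} \{khk\inv\} \times k(Y_{h\inv} \cap X_{k\inv}).
\end{equation*}
Using the decomposition $X = \bigsqcup_{gH \in G/H} g(Y \cap X_{g\inv})$ up to null sets, one checks that for almost every $x \in g(Y \cap X_{g\inv})$ the fibre $\{k' \in G \mid (k',x) \in \Omega_A\}$ is contained in $\{gcg\inv \mid c \in C\}$ (the ambiguity in $g$ within its coset only reparametrises $C$), and so has at most $|C|$ elements. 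Integrating yields $\mu_s(\Omega_A) \leq |C| < \infty$, so $G \ltimes X$ is not icc. The main obstacle is the forward direction: verifying that the candidate $H = N_G(C)$ and $Y = F\inv(C)$ yield a valid inducement in the partial-action sense requires a careful interplay between the domains $X_g$ and the set-valued map $F$, particularly when establishing $\mu$-fullness and the identity $G \ltimes Y = H \ltimes Y$.
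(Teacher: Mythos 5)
Your proof is correct. The direction $(2)\Rightarrow(1)$ is essentially identical to the paper's: the set-valued map $F$, the level set $Y=F\inv(C)$, the normalizer $H=\{g\mid gCg\inv=C\}$, and the verification that $G\ltimes Y=H\ltimes Y$ with every $c\in C$ acting trivially on $Y$ all match (you could add one line checking that $Y$ is $H$-invariant, i.e.\ $h(Y\cap X_{h\inv})=Y\cap X_h$ for $h\in H$, since that is part of the paper's definition of an induced subsystem, but it is immediate from $F(hx)=hF(x)h\inv$). Where you genuinely diverge is the converse: you compute $\Omega_A$ for $A=\{h\}\times Y_{h\inv}$ directly inside the ambient groupoid $G\ltimes X$ and bound the fibre over each point by $|C|$ via the coset decomposition $X=\bigsqcup_{gH} g(Y\cap X_{g\inv})$, the key observation being that if $x=k\cdot y=g\cdot y'$ with $y,y'\in Y$ then $g\inv k\in H$, so the fibre at $x$ sits inside $\{gcg\inv\mid c\in C\}$. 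The paper instead only checks that the conjugacy class of $(Y_h\cap Y_{h\inv})\times\{h\}$ is $\mu$-finite \emph{within} $H\ltimes Y$ and then transfers non-icc-ness to $G\ltimes X$ via the earlier corollary that icc is preserved between $\cG$ and $\cG|_K$ for a $\mu$-full subset $K$ (through $L(\cG|_K)\ovt B(\ell^2(\N))\cong L(\cG)\ovt B(\ell^2(\N))$). Your route is self-contained and avoids invoking that machinery at the cost of the explicit fibre count; the paper's is shorter given the full-subgroupoid corollary is already in hand. Both are sound.
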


\begin{proof}
    First assume that $G \ltimes X$ admits a conjugation invariant subset $B \subset \Iso(G \ltimes X) \setminus (G \ltimes X)^{(0)}$ with finite measure. As before, we set 
    \begin{align*}
        F(x) = \{g \in G \;|\; (g,x) \in B \} 
    \end{align*}
    and observe that $F(gx) = gF(x)g^{-1}$ if $x \in X_{g^{-1}}$. As in the global case, we may fix $C \subset G \setminus \{e\}$ finite for which the set 
    \begin{align*}
        Y = \{x \in X \;|\; F(x) = C \} 
    \end{align*}
    has positive measure. By ergodicity, $Y$ is $\mu$-full. For any $g \in G$, 
    \begin{align*}
        g \cdot (Y \cap X_{g^{-1}}) = \{x \;|\; F(x) = gCg^{-1}\} \cap X_g \;. 
    \end{align*}
    In particular, if $gCg^{-1} \neq C$, then $g(Y \cap X_{g^{-1}}) \cap Y = \emptyset$. Since $Y_g := g(Y \cap X_{g^{-1}}) \cap Y$, it follows that $Y_g = \emptyset$ unless $gCg^{-1} = C$. On the other hand, if $gCg^{-1} = C$, we get the identity $Y_g = Y \cap X_g$. Set $H = \{g \in G: gCg^{-1} = C\}$. We therefore have the identity $H \ltimes Y = G \ltimes Y$. Furthermore, for any $h \in C$ and $y \in Y_{h^{-1}}$, since $F(y) = C$, we have that $(h,y) \in B$. That is, $h \cdot y = y$. Thus, $h$ is an element that acts trivially on $Y$ with finite conjugacy class. 

    Conversely, if there are $H \leq G$ and $Y \subset X$ with $H \ltimes Y = G \ltimes Y$ and an element $h \in H$ that acts trivially on $Y$ and has finite $H$-conjugacy class, it suffices to demonstrate that $B:= (Y_h \cap Y_{h^{-1}}) \times \{h\}$ has $\mu$-finite conjugacy class in $H \ltimes Y$. This follows from the fact that
    \begin{align*}
        \Omega_B \subset \bigcup_{g \in H} \{ghg^{-1}\} \times Y\;
    \end{align*}
    and that there are only finitely many conjugates on the right hand side. 
\end{proof}
\begin{remark}
    Due to the generality of induced subsystems in the partial dynamical case, note that condition 1 in Theorem~\ref{Theorem: partial action} is distinct from the criterion given in Proposition~\ref{prop:transformation groupoid}, even if the original action $G \actson X$ is global. 
\end{remark}
By the Nielsen--Schrier Theorem, all subgroups of free groups are free. Thus, we have the following Corollary: 
\begin{corollary}
    Let $\F$ be a free group and suppose that $\F \actson (X,\mu)$ is an ergodic nonsingular partial action. The following are equivalent: 
    \begin{enumerate}
        \item The groupoid $(\F \ltimes X,\mu)$ is not icc. 
        \item There is a non-trivial $g \in \mathbb{F}$ and a $\langle g \rangle$-invariant subset $Y \subset (X,\mu)$ such that $\F \actson X$ is induced from $\langle g \rangle \actson Y$ and $\langle g \rangle \actson Y$ is not essentially free.
    \end{enumerate}
In particular, for nonsingular partial $\Z$-actions, the groupoid $(\Z \ltimes X, \mu)$ is icc if and only if $\Z \actson (X,\mu)$ is essentially free. 
\end{corollary}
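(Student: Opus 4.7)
The plan is to combine Theorem~\ref{Theorem: partial action} with the Nielsen--Schreier theorem (every subgroup of a free group is free) and the fact that any free group of rank at least two is icc.

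For the direction $(1)\Rightarrow (2)$, I would apply the contrapositive of Theorem~\ref{Theorem: partial action}: the failure of $\F\ltimes X$ to be icc produces a subgroup $H\leq \F$, an $H$-invariant Borel subset $Y\subset X$ with $\F\actson X$ induced from $H\actson Y$, and an element $h\in H\setminus\{e\}$ such that $Y_{h^{-1}}$ has positive measure, $h$ acts trivially on $Y$, and $h$ has finite $H$-conjugacy class. By Nielsen--Schreier $H$ is free, and the existence of a non-trivial element with finite conjugacy class rules out rank $\geq 2$; since $h\neq e$ rules out the trivial group, one obtains $H=\langle g\rangle\cong \Z$ for some $g\in\F$. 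Writing $h=g^n$ with $n\neq 0$, the fact that $g^n$ fixes the positive-measure set $Y_{h^{-1}}$ pointwise witnesses that $\langle g\rangle\actson Y$ is not essentially free.

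For the converse $(2)\Rightarrow (1)$, starting from $g$, $Y$ and the failure of essential freeness, I would first pick $n\neq 0$ for which the set $W:=\{y\in Y_{g^{-n}}\mid \sigma_{g^n}(y)=y\}$ has positive measure. The natural candidate to feed into Theorem~\ref{Theorem: partial action} is the $\langle g\rangle$-saturation $Y':=\bigcup_{m\in\Z}\sigma_{g^m}(W\cap X_{g^{-m}})$ of $W$ inside $Y$. A short calculation using the commutativity of $\langle g\rangle$ together with the partial-action identity $\sigma_{g^a}\circ\sigma_{g^b}\subset\sigma_{g^{a+b}}$ shows that $g^n$ fixes every point of $Y'\cap X_{g^{-n}}$, so $g^n$ acts trivially on $Y'$ with $Y'_{g^{-n}}\supset W$ of positive measure. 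Ergodicity of $\F\actson X$ upgrades the positivity of $\mu(Y')$ to $\mu$-fullness of $Y'$ in $\F\ltimes X$, and the inclusion $\F\ltimes Y'\subset \F\ltimes Y=\langle g\rangle\ltimes Y$ gives $\F\ltimes Y'=\langle g\rangle\ltimes Y'$. Since $\langle g\rangle$ is abelian, $g^n$ has conjugacy class of size one, so Theorem~\ref{Theorem: partial action} forces $\F\ltimes X$ not to be icc.

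The specialisation to $\Z$-actions is then essentially automatic: for the forward implication one takes $g$ to be a generator of $\Z$, $Y=X$ and $H=\Z$, while for the converse essential freeness of $\Z\actson (X,\mu)$ is inherited by any partial restriction, so condition (2) of the general equivalence cannot hold. The only point requiring genuine care is the verification in the $(2)\Rightarrow (1)$ step that the saturation $Y'$ is $\langle g\rangle$-invariant in the partial-action sense and that $g^n$ acts trivially on all of $Y'\cap X_{g^{-n}}$ rather than merely on $W$; both facts amount to careful bookkeeping of the domains $X_{g^m}$ and do not rely on any deep argument.
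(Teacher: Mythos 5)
Your argument is correct, and the forward direction $(1)\Rightarrow(2)$ is essentially identical to the paper's: apply Theorem~\ref{Theorem: partial action}, invoke Nielsen--Schreier to see that the subgroup $H$ is free, and note that a free group of rank at least two has no non-trivial finite conjugacy classes, forcing $H=\langle g\rangle$.

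Where you diverge is in the converse. The paper does \emph{not} route $(2)\Rightarrow(1)$ back through Theorem~\ref{Theorem: partial action}; instead it takes the positive-measure fixed-point set $E=\{y\in Y_{g^{-n}}\cap Y_{g^n}\mid g^n\cdot y=y\}$, observes that the bisection $B=\{g^n\}\times E$ satisfies $\Omega_B\subseteq\{g^n\}\times(Y_{g^n}\cap Y_{g^{-n}})$ because $\langle g\rangle$ is abelian, concludes that $H\ltimes Y$ is not icc directly from Definition~\ref{Definition:Finite measure conjugacy class and icc}, and then transfers non-icc-ness to $\F\ltimes X$ via the corollary that icc passes between a groupoid and its induced subgroupoid over a $\mu$-full subset. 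Your route instead manufactures a \emph{new} induced subsystem: you saturate the fixed-point set $W$ under $\langle g\rangle$ to obtain $Y'\subseteq Y$ on which $g^n$ acts genuinely trivially, check $\mu$-fullness of $Y'$ via ergodicity and $\F\ltimes Y'=\langle g\rangle\ltimes Y'$ by restriction, and then apply the contrapositive of Theorem~\ref{Theorem: partial action}. This works --- the key identity $\sigma_{g^n}(\sigma_{g^m}(w))=\sigma_{g^m}(\sigma_{g^n}(w))=\sigma_{g^m}(w)$ does follow from the graph inclusions $\sigma_{g^a}\circ\sigma_{g^b}\subset\sigma_{g^{a+b}}$ once one checks that $w$ lies in the relevant domains, exactly the bookkeeping you flag --- but it is more work than necessary: the paper's direct conjugacy-class estimate avoids both the saturation construction and the re-verification of the ``induced from'' hypotheses for $Y'$. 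What your version buys is uniformity (both implications are read off from the single statement of Theorem~\ref{Theorem: partial action}); what the paper's version buys is brevity and independence from the ergodicity-based fullness argument in the converse direction.
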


\begin{proof}
    Assume first that $\mathbb{F} \ltimes X$ is not icc. This means that there is some subgroup $H \leq \mathbb{F}$ and some $H$-invariant subspace $Y$ that induces $\mathbb{F} \actson (X,\mu)$ for which for some non-trivial $h \in H$, $h$ acts trivially on $Y$ but $h$ has finite conjugacy class. By the Nielsen--Schrier Theorem, $H$ must be a free group, and since it admits a non-trivial element with finite conjugacy class, $H$ must be cyclic. Let $g \in H$ be the generator for $H$. The group $H$ cannot act essentially free since for some $n \in \mathbb{Z}$, $h = g^n$ acts trivially on $Y$. 

    Conversely, assume that there is an induced subsystem $H \actson Y$ for a cyclic group $H = \langle g \rangle \leq \mathbb{F}$. Furthermore, assume that $H \actson Y$ is not essentially free. This means that there is a non-zero $n$ for which the set $E := \{y \in Y_{g^{-n}} \cap Y_{g^n} \; | \; g^n \cdot y = y \}$ has positive measure. The set $B :=\{ g^n \} \times E$ is a bisection in $H \ltimes Y$ with $\Omega_B \subseteq \{g^n\} \times (Y_{g^n} \cap Y_{g^{-n}}) $, and thus $H \ltimes Y$ is not icc. Since $\mathbb{F} \ltimes X$ is induced from $H \ltimes Y$, it is also not icc. 
\end{proof}

\subsection{Deaconu--Renault Groupoids}

Let $(X,\mu)$ be a Borel probability space and let 
    \begin{align*}
        \sigma : X \to X 
    \end{align*}
    be a measurable map. Define the Deaconu--Renault groupoid of $(X,\sigma,\mu)$ as follows: as measure space
    \begin{align*}
        \cG(\sigma) = \{(x,n-m, y)\;|\; x,y \in X, m,n \in \mathbb{Z}_{\geq 0}, \sigma^n(x) = \sigma^m(y) \} \;,
    \end{align*}
    where the measure is given by the induced measure on $X \times \mathbb{Z} \times X$. The source and target maps are 
    \begin{align*}
        s: (x,k,y) \mapsto y \text{ and } t: (x,k,y) \mapsto x \;. 
    \end{align*}
    These are Borel since they are projection maps. Inversion is then $(x,k,y)^{-1} = (y,-k,x)$, which is Borel since $k \mapsto -k$ is Borel. Finally, multiplication is Borel since it is a composition of projections and addition.

    A computation of the isotropy subgroupoid gives us the identity 
    \begin{align}\label{Equation: B_n}
        \Iso(\cG(\sigma)) = \{(x,k-l,x)\;|\; \sigma^{k}(x) = \sigma^l(x) \} \;. 
    \end{align}
    Furthermore, given any $(x,n,y) \in \cG(\sigma)$ and $(y,m,y) \in \Iso(\cG(\sigma))$, conjugation gives us the identity
    \begin{align*}
        (x,n,y) \cdot (y,m,y) \cdot (y,-n,x) = (x,n+m-n,x) = (x,m,x) \;. 
    \end{align*}
    Fix any $n \in \mathbb{Z}$, let $B_n := (X \times \{n\} \times X) \cap \Iso(\cG(\sigma))$. This is a conjugation invariant Borel bisection by the previous identity. For what follows, we say that a measurable map $\sigma: (X,\mu)\to(X,\mu)$ is essentially free if the set 
    \begin{align*}
        \{x \in X \;|\; \text{ there exists }m,n \geq 0 \text{ such that } \sigma^m(x) = \sigma^n(x) \} 
    \end{align*}
    has measure zero. 
\begin{proposition} Let $\sigma: (X,\mu) \to (X,\mu)$ be a measurable map on a Borel probability space $(X,\mu)$.
    The following are equivalent: 
    \begin{enumerate}
        \item $\cG(\sigma)$ is icc. 
        \item For all $n \in \mathbb{Z} \setminus \{0\}$, we have $\mu_s(B_n) = 0$. 
        \item The map $\sigma$ is essentially free. 
    \end{enumerate}
    \end{proposition}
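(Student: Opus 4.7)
The plan is to establish the two equivalences $(2) \Leftrightarrow (3)$ and $(1) \Leftrightarrow (2)$ separately. For the first, I would observe that each $B_n$ consists of elements of the form $(x,n,x)$, so both source and target restrict to the projection onto the first coordinate, making $B_n$ a bisection. Consequently $\mu_s(B_n) = \mu(s(B_n))$, and by the definition of $\cG(\sigma)$,
\begin{equation*}
    s(B_n) = \{x \in X \mid \exists\, k, l \geq 0,\ k - l = n,\ \sigma^k(x) = \sigma^l(x)\}\,.
\end{equation*}
Taking the union over $n \neq 0$ recovers exactly the set of points at which $\sigma$ fails to be essentially free, so $(2) \Leftrightarrow (3)$ follows at once.

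For $(1) \Rightarrow (2)$, I would invoke the observation recorded in the paragraph preceding (\ref{Equation: B_n}) that $B_n$ is already conjugation invariant. Hence $\Omega_{B_n} = B_n$ has $\mu_s$-measure at most $\mu(X) = 1 < \infty$. For $n \neq 0$, every element of $B_n$ has nontrivial middle coordinate, so $B_n \cap \cG\zero = \emptyset$; the icc condition from Definition~\ref{Definition:Finite measure conjugacy class and icc} then forces $\mu_s(B_n) = 0$.

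For the converse $(2) \Rightarrow (1)$, I would use the disjoint decomposition $\Iso(\cG(\sigma)) \setminus \cG\zero = \bigsqcup_{n \neq 0} B_n$. If $\mu_s(B_n) = 0$ for every $n \neq 0$, then $\Iso(\cG(\sigma)) \setminus \cG\zero$ is itself $\mu_s$-null, so any Borel subset $A \subset \Iso(\cG(\sigma))$ agrees with $A \cap \cG\zero$ up to a null set, and the icc condition holds vacuously. I do not anticipate any genuine obstacle; the proof is essentially bookkeeping for the partition of $\Iso(\cG(\sigma))$ by the bisections $B_n$, with the only mild subtlety being the verification that each $B_n$ is indeed a bisection.
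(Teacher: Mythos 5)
Your proof is correct and follows essentially the same route as the paper: both rest on the decomposition $\Iso(\cG(\sigma)) = \bigsqcup_n B_n$ into conjugation-invariant bisections and the identification of $\bigcup_{n\neq 0} s(B_n)$ with the non-free locus. You simply spell out the bookkeeping (each $B_n$ is a bisection, so $\mu_s(B_n)=\mu(s(B_n))\leq 1$, and $B_n\cap\cG\zero=\emptyset$ for $n\neq 0$) that the paper leaves implicit in ``by our previous calculations.''
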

\begin{proof}
    Note by equation \eqref{Equation: B_n}, $\Iso(\cG(\sigma)) = \bigsqcup_n B_n$. Therefore, by our previous calculations, conditions $1$ and $2$ are equivalent. That $3$ is equivalent to $2$ follows from the fact that 
    \begin{equation*}
        s\left(\bigcup_{n \neq 0} B_n\right) = \{x \;|\;\text{there exists }k > l \geq 0 \text{ such that } \sigma^k(x) \neq \sigma^l(x) \} \;.\qedhere
    \end{equation*}
\end{proof}

\end{document}